\numberwithin{equation}{section}
\theoremstyle{plain}
\newtheorem{theorem}{Theorem}[section]
\newtheorem{lemma}[theorem]{Lemma}
\theoremstyle{definition}
\newtheorem{assumption}[theorem]{Assumption}
\theoremstyle{remark}
\def\dashint{\operatorname%
{\,\,\text{\bf--}\kern-.98em\DOTSI\intop\ilimits@\!\!}}
\newcommand{\VMO}{\mathrm{VMO}}
\newcommand{\BMO}{\mathrm{BMO}}
\def\bC{\mathbb{C}}
\def\bH{\mathbb{H}}
\def\bR{\mathbb{R}}
\def\bN{\mathbb{N}}
\def\cB{\mathcal{B}}
\def\cC{\mathcal{C}}
\def\cE{\mathcal{E}}
\def\cH{\mathcal{H}}
\def\cK{\mathcal{K}}
\def\cL{\mathcal{L}}
\def\cM{\mathcal{M}}
\def\cX{\mathcal{X}}
\renewcommand{\vec}[1]{\boldsymbol{#1}}
\begin{document}
\title[$L_{p,q}$-estimates for elliptic and parabolic systems]{Weighted $L_{p,q}$-estimates for higher order elliptic and parabolic systems  with  $\mathrm{BMO}_x$ coefficients on Reifenberg flat domains}

\author[J. Choi]{Jongkeun Choi}
\address[J. Choi]{School of Mathematics, Korea Institute for Advanced Study, 85 Hoegiro, Dongdaemun-gu, Seoul 02455, Republic of Korea}
\email{jkchoi@kias.re.kr}

\author[D. Kim]{Doyoon Kim}
\address[D. Kim]{Department of Mathematics, Korea University, 145 Anam-ro, Seongbuk-gu, Seoul, 02841, Republic of Korea}
\email{doyoon\_kim@korea.ac.kr}
\thanks{D. Kim was supported by Basic Science Research Program through the National Research Foundation of Korea (NRF) funded by the Ministry of Education (2014R1A1A2054865).}

\subjclass[2010]{35B45, 35J48, 35K41, 35R05}
\keywords{higher order elliptic and parabolic systems; weighted $L_{p,q}$-estimates;  BMO coefficients}

\begin{abstract}
We prove weighted $L_{p,q}$-estimates for divergence type higher order elliptic and parabolic systems with irregular coefficients on Reifenberg flat domains.
In particular, in the parabolic case the coefficients do not have any regularity assumptions in the time variable.
As functions of the spatial variables, the leading coefficients are permitted to have small mean oscillations.
The weights are in the class of Muckenhoupt weights $A_p$.
We also prove the solvability of the systems in weighted Sobolev spaces.
\end{abstract}

\maketitle

\section{Introduction}		\label{sec1}
We study weighted $L_{p,q}$-estimates and the solvability of divergence type higher order parabolic systems
\begin{equation}		\label{problem}
\vec u_t+(-1)^m\cL\vec u=\sum_{|\alpha|\le m} D^\alpha \vec f_\alpha 
\end{equation}
in $\Omega_T=(-\infty,T)\times \Omega$, where $T\in (-\infty,\infty]$ and $\Omega$ is either a bounded or unbounded domain in $\bR^d$.
The domain $\Omega$ can be $\bR^d$ as well.
The differential operator $\cL$ is in divergence form of order $2m$ acting on column vector valued functions $\vec u=(u^1,\ldots,u^n)^{\operatorname{tr}}$ defined on $\Omega_T$ as follows:
\[
\cL\vec u=\sum_{|\alpha|
\le m,\, |\beta| \le m}D^{\alpha}(A^{\alpha\beta}D^\beta\vec u).
\] 
Here, $\alpha=(\alpha_1,\ldots,\alpha_d)$, $\beta=(\beta_1,\ldots,\beta_d)$ are multi-indices,  and we write $D^\alpha \vec u=D_x^\alpha \vec u$ for  spatial derivatives of $\vec u$;
\[
D^\alpha\vec u=D^{\alpha_1}_1\ldots D^{\alpha_d}_d\vec u.
\]
All the coefficients $A^{\alpha\beta}=A^{\alpha\beta}(t,x)$ are  $n\times n$ complex-valued matrices whose entries $A^{\alpha\beta}_{ij}(t,x)$ are bounded measurable functions defined on the entire space $\bR^{d+1}$.
We also consider elliptic systems with the operator $\cL$ as in \eqref{problem}.
In this case all the coefficients and functions involved are independent of the time variable.

Throughout this paper,  the leading coefficients $A^{\alpha\beta}$, $|\alpha|=|\beta|=m$, satisfy the Legendre-Hadamard ellipticity condition, which is more general than the uniform ellipticity condition.
We assume that the coefficients of the parabolic systems are merely measurable in the time variable and have small bounded mean oscillations (BMO) with respect to the spatial variables.
We call such coefficients {\em $\mathit{BMO}_{\mathit{x}}$ coefficients} as in \cite{MR2352490} to indicate that the small mean oscillation condition is enforced only on the spatial variables.
As mentioned in \cite{MR1828321},  such type of coefficients with no regularity assumption in the time variable are necessary in the study of filtering theory.

In this paper,  we prove a priori weighted (mixed) norm estimates
 for divergence type higher order parabolic systems \eqref{problem} with $\BMO_x$ coefficients in  $\Omega_T$, where $\Omega$ is a Reifenberg flat domain (possibly unbounded).
More precisely, 
we first establish a priori weighted $L_p$-estimates with a Muckenhoupt weight $w\in A_p$:
$$
\sum_{|\alpha|\le m}\|D^\alpha \vec u\|_{L_{p,w}(\Omega_T)}\le N\sum_{|\alpha|\le m}\|\vec f_\alpha\|_{L_{p,w}(\Omega_T)}, \quad p\in (1,\infty).
$$
The weight $w$ is  defined on a space of homogeneous type $\cX$ in $\bR^{d+1}$ such that $\Omega_T \subset \cX$. 
For a precise definition of weights defined on spaces of homogeneous type, see Section \ref{161006@sec1}.
With regard to $L_p$-estimates with Muckenhoupt weights defined in the whole space, see \cite{MR1980981,MR2286441,MR3225808,MR3467697,MR3630407}.
We then obtain a priori weighted $L_{p,q}$-estimates with a mixed weight:
\begin{equation}		\label{161002@eq1}
\sum_{|\alpha|\le m}\|D^\alpha \vec u\|_{L_{p,q,w}(\Omega_T)}\le N\sum_{|\alpha|\le m}\|\vec f_\alpha\|_{L_{p,q,w}(\Omega_T)}, \quad p,\,q\in (1,\infty).
\end{equation}
The $L_{p,q}$-norm with the mixed weight $w=w_1w_2$ is defined as 
$$
\|f\|_{L_{p,q,w}(\Omega_T)}=\left(\int_{\cX_2}\left(\int_{\cX_1}|f|^pI_{\Omega_T}w_1 \right)^{q/p}w_2 \right)^{1/q},
$$
where  $w_1$ is an $A_p$ weight  on a space of homogeneous type $\cX_1\subset \bR^{d_1}$ and $w_2$ is an $A_q$ weight on a space of homogeneous type $\cX_2\subset \bR\times \bR^{d_2}$, $d_1+d_2=d$. 
Here, $\Omega_T\subset \cX_1\times \cX_2$ and  the mixed weight $w$ is defined on $\cX_1\times \cX_2 \subset \bR^{d+1}$.
For more discussions of mixed weights, see Section \ref{161006@sec1} in this paper and \cite{MR3812104}.
We also discuss the solvability for higher order parabolic systems \eqref{problem} with $\BMO_x$ coefficients in weighted Sobolev spaces.
The corresponding results for divergence type higher order elliptic systems with BMO coefficients are also addressed.

In the study of divergence type higher order parabolic systems,
to the best of the authors' knowledge, our results  are completely new  in the sense that Sobolev spaces with $A_p$ weights are considered, the coefficients are merely measurable in time, and the domains are Reifenberg flat.
Even in the case of  $L_p$-estimates (with unweighted and unmixed-norm) for higher order systems on Reifenberg flat domains, there are no such results in the literature. 
The only existing related literature seems to be a recent paper \cite{MR2771670}, where the authors proved $L_p$-estimates for higher order systems with $\BMO_x$ coefficients  in the whole space, on a half space, and on bounded Lipschitz domains.
Restricted to fourth-order parabolic systems with $\BMO_x$ coefficients, unweighted $L_p$-estimates on a bounded Reifenberg flat domain are obtained in \cite{MR2460025} under the uniform parabolicity condition.
With regard to the study of unweighted $L_p$-estimates for second order parabolic systems with $\BMO_x$ and VMO (vanishing mean oscillations) coefficients, see, for instance, \cite{MR2304157,MR2328932,MR2650802} and references therein.
Recently, in \cite[Sections 7 and 8]{MR3812104} the authors proved weighted $L_{p,q}$-estimates and the solvability for divergence type higher order parabolic systems \eqref{problem} when the leading coefficients are measurable in one spatial direction and have small BMO semi norms in the other variables including the time variable (partially BMO).
They also proved weighted $L_{p,q}$-estimates for non-divergence type higher order systems with $\BMO_x$ coefficients in the whole space and on half spaces.
See also \cite{MR2286441} for weighted $L_{p,q}$-estimates of non-divergence type higher order parabolic systems with time independent VMO coefficients in the whole space.

Main results in this paper as well as those in \cite[Sections 7 and 8]{MR3812104} can be viewed as a series of weighted estimates and the solvability for (higher order) elliptic and parabolic systems with irregular coefficients.
However, on establishing weighted $L_p$-estimates (unmixed norm), the arguments are  different.
In \cite{MR3812104}, the proof relies on  mean oscillation estimates combined with the  Fefferman-Stein sharp function theorem.
This argument was given by Krylov \cite{MR2304157} for the $L_p$ theory of  both divergence and non-divergence elliptic and parabolic equations with $\VMO_x$ coefficients.
In this paper, we adapt the idea in Caffarelli-Peral \cite{MR1486629}, which is based on a level set argument together with the measure theory on a ``crawling of ink spots" lemma originally due to Safonov and Krylov \cite{MR0563790}.
This argument is also used in \cite{MR2835999}  for  higher order systems with partially BMO coefficients in Reifenberg flat domains.
To establish weighted $L_{p}$-estimates for $p\in (1,\infty)$, we refine the measure theory  to  spaces of homogeneous type with weighted measures.
Moreover, we generalize the level set estimates from \cite{MR2835999}, roughly speaking, to control the weighted measure of  level sets of $|D^\alpha \vec u|$ by those of $\cM(|\vec f_\alpha|^q)^{1/q}$ with not only $q=2$ but also for any $q\in (1,\infty)$.
Here, $\cM$ is the Hardy-Littlewood maximal function operator.
This type of level set estimate with $q=2$ was used, for instance, in \cite{MR3225808,MR3467697} to obtain weighted $L_p$-estimates for divergence type second order systems.
A noteworthy difference is that in this paper, $L_p$-estimates are established with $A_p$ weights, whereas in \cite{MR3225808,MR3467697}  $L_p$-estimates are obtained with $A_{p/2}$ weights, the collection of which is strictly smaller than $A_p$.

Using aforementioned weighted $L_p$-estimates (unmixed norm), we prove weighted $L_{p,q}$-estimates  \eqref{161002@eq1} (mixed norm) for the systems.
The key ingredient  is a refined version of Rubio de Francia extrapolation theorem used in \cite{MR3812104}.
We remark that such a refinement of the well-known extrapolation theorem (see, for instance, \cite{MR2797562}) is necessary in our setting where the coefficients and the boundaries are very rough.
For more discussion, see the  paragraph above Theorem \ref{1017@@thm1} in this paper.
One can find in \cite{MR2286441,MR3630407} the well-known version of the extrapolation theorem employed to obtain $L_{p,q}$-estimates.
We also refer the reader to \cite{MR2352490,MR2764911,MR2982717,MR3611504} and references therein for more information about mixed-norm estimates with or without weights.

For the solvability of the systems in weighted Sobolev spaces, we adapt the idea in  \cite[Section 8]{MR3812104}, where a priori weighted estimates and the solvability in unweighted Sobolev spaces are used for the existence of solutions to the corresponding systems.
From the results on parabolic systems, we immediately obtain the corresponding results for higher order elliptic systems in divergence form with BMO coefficients. 

The remainder of this paper is organized as follows.
Section 2 contains  some notation and definitions. 
In Section 3, we state our main theorems including the results of higher order elliptic systems.
In Section 4, we present  some auxiliary results, while in Section 5, we establish interior and boundary estimates for derivatives of solutions.
Section 6 is devoted to the level set argument, and based on the results in Sections 4-6, we provide the proofs of the main theorems in Section 7.
In the appendix, we provide the proofs of some technical lemmas.

\section{Preliminaries}		\label{sec2}

\subsection{Basic notation}

We use $X=(t,x)$ to denote a point in $\bR^{d+1}$; $x=(x_1,\ldots,x_d)$ is a point in $\bR^d$.
We also write $Y=(s,y)$ and $X_0=(t_0,x_0)$, etc.
Let $\bR^d_+=\{x\in \bR^d:x_1>0\}$ and $\bR^{d+1}_+=\bR\times \bR^d_+$.
We  use $\Omega$ to denote an open set in $\bR^d$ and $\Omega_T=(-\infty,T)\times \Omega$.
We use the following notion for cylinders in $\bR^{d+1}$:
\begin{align*}
&Q_r(X)=(t-r^{2m},t)\times B_r(x), \\
&Q_r^+(X)=Q_r(X)\cap \bR^{d+1}_+,\\
&\cB_r(X)=(t-r^{2m},t+r^{2m})\times B_r(x),
\end{align*}
where $B_r(x)$ is the usual Euclidean ball of radius $r$ centered at $x\in \bR^d$.
Here, if we define the parabolic distance between the points $X$ and $Y$ in $\bR^{d+1}$ as 
\begin{equation}							\label{eq0215_01}
\rho(X,Y)=\max \big\{|x-y|, |t-s|^{\frac{1}{2m}}\big\},
\end{equation}
then 
$$
\cB_r(X)=\{Y\in \bR^{d+1}:\rho(X,Y)<r\},
$$
that is, $\cB_r(X)$ is an open ball in $\bR^{d+1}$ equipped with the parabolic distance $\rho$.
We abbreviate $Q_r=Q_r(0)$ and $B_r=B_r(0)$, etc.
For a function $f$ on $Q$, we use  $\left( f \right)_Q$ to denote the average of $f$ in $Q$, that is,
\[
(f)_Q=\frac{1}{|Q|}\int_Q f=\dashint_Q f.
\]

\subsection{Weights on a space of homogeneous type}		\label{161006@sec1}

Let $\cX$ be a set. 
A nonnegative symmetric function $\rho$ on $\cX\times \cX$ is called a quasi-metric on $\cX$ if there exists a positive constant $K_1$ such that
$$
\rho(x,x)=0\quad \text{and}\quad \rho(x,y)\le K_1\left(\rho(x,z)+\rho(z,y) \right)
$$
for any $x,y,z\in \cX$.
We denote balls in $\cX$ by
\begin{equation}
							\label{eq0624_01}
B_r^\cX(x)=\{y\in \cX:\rho(x,y)<r\}, \quad \forall x\in \cX, \quad \forall r>0.
\end{equation}
We say that $(\cX,\rho,\mu)$ is a space of homogeneous type if $\rho$ is a quasi-metric on $\cX$, $\mu$ is a Borel measure defined on a $\sigma$-algebra on $\cX$ which contains all the balls in $\cX$, and the following doubling property holds: there exists a constant $K_2$ such that for any $x\in \cX$ and $r>0$, 
$$
0<\mu(B_{2r}^\cX(x))\le K_2\mu(B_r^\cX(x))<\infty.
$$
Without loss of generality, we assume that balls $B_r^\cX(x)$ are open in $\cX$.

For any $p\in (1,\infty)$ and a space of homogeneous type $(\cX,\rho,\mu)$, the space $A_p(\cX)$ denotes the set of all nonnegative functions $w(x)$ on $\cX$ such that 
$$
[w]_{A_p}:=\sup_{\substack{x\in \cX \\ r>0}}\bigg(\dashint_{B_r^\cX(x)}w\,d\mu \bigg)\bigg(\dashint_{B_r^\cX(x)}w^{-\frac{1}{p-1}}\,d\mu\bigg)^{p-1}<\infty.
$$
One can easily check that  $[w]_{A_p} \ge 1$ for all $w \in A_p(\cX)$ and $[w]_{A_p}=1$ when $w\equiv1$.
We denote
\[
w(E)=\int_E w \, d\mu.
\]

Throughout this paper, whenever $\cX$ is said to be a space of homogeneous type in $\bR^k$ for some positive integer $k$, we mean the triple $(\cX, \rho, \mu)$, where $\cX$ is an open set in $\bR^k$, the metric $\rho$ is the usual Euclidean distance, and $\mu$ is the Lebesgue measure in $\bR^k$.
If $\cX$ is assumed to be a space of homogeneous type in $\bR \times \bR^k$ (or $\bR^{k+1}$), then $\cX$ is an open set in $\bR^{k+1}$, the metric $\rho$ is the parabolic distance defined in \eqref{eq0215_01}, and $\mu$ is the Lebesgue measure in $\bR^{k+1}$.
Thus, for example, when we consider weights of the type $w(t,x) = w_1(x') w_2(t,x'')$ in the mixed norm case for parabolic equations/systems,  where $w_1$ is a weight on a space of homogeneous type $\cX_1\subset \bR^{d_1}$ and $w_2$ is a weight on a space of homogeneous type $\cX_2\subset \bR \times \bR^{d_2}$, $d_1+ d_2 = d$,  $\cX_1$  is equipped with the usual Euclidean distance and the $d_1$-dimensional Lebesgue measure, and $\cX_2$  is equipped with  the parabolic distance $\rho$ and the $(d_2+1)$-dimensional Lebesgue measure.

Since we consider only the Lebesgue measures and the parabolic or Euclidean distances, the constant $K_1$ is always $1$ in our case. However, the doubling constant $K_2$ may vary depending on the choice of $\cX$.
For example, the doubling constants of the whole spaces $\bR^d$ and $\bR^{d+1}$ are $2^d$ and $2^{d+2m}$, respectively.
When $\cX\subset \bR^{d+1}$ and there exists a constant $\varepsilon>0$ such that $|\cB_r(X)\cap \cX |\ge \varepsilon |\cB_r(X)|$ for any $X\in \cX$ and $r>0$, $\cX$ is a space of homogeneous type with a doubling constant $K_2=K_2(d,m,\varepsilon)$.
If $\cX=\bR\times \Omega$, where $\Omega$ is a bounded Reifenberg flat domain in $\bR^d$, then the doubling constant of $\cX$ is determined by $d$, $m$, $|\Omega|$, $R_0$, and $\gamma\in (0,1/4)$, where $R_0$ and $\gamma$ are constants in Assumption \ref{0923.ass1}; see \cite[Remark 7.3]{MR3812104}.
Moreover, if $\cX$ is assumed to be $\cX_1\times \cX_2$, where $\cX_1$ and $\cX_2$ are spaces of homogeneous type with doubling constants $K_2'$ and $K_2''$ in $\bR^{d_1}$ and $\bR\times \bR^{d_2}$, $d_1+d_2=d$, respectively, 
then $\cX$ is a space of homogeneous type in $\bR^{d+1}$ with a doubling constant $K_2$, where $K_2$ is determined by $K_2'K_2''$; see Lemma \ref{160629@lem1}.

\subsection{Function spaces}

Let $p,\,q\in (1,\infty)$, $-\infty\le S<T\le \infty$, $\Omega$ be an open set in $\bR^d$, and $(S,T)\times \Omega\subseteq \cX_1\times \cX_2$, where 
 $\cX_1$ and $\cX_2$ are  spaces of homogeneous type in  $\bR^{d_1}$ and $\bR\times \bR^{d_2}$, $d_1+d_2=d$, respectively.
Let 
\[
w(t,x)=w(t,x',x'')=w_1(x')w_2(t,x''), \quad x'\in \cX_1, \quad (t,x'')\in \cX_2,
\]
where $w_1\in A_p(\cX_1)$ and $w_2\in A_q(\cX_2)$.
For such $w$, we define $L_{p,q,w}((S,T)\times \Omega)$ as the set of all measurable functions $u$ on $(S,T)\times \Omega$ having a finite norm
\begin{multline*}
\|u\|_{L_{p,q,w}((S,T)\times \Omega)}\\
=\left(\int_{\cX_2}\left(\int_{\cX_1} |u|^pI_{(S,T)\times \Omega} w_1(x')\,dx'\right)^{q/p}w_2(t,x'')\,dx''\,dt\right)^{1/q}.
\end{multline*}
We use 
\[
W^{1,m}_{p,q,w}((S,T)\times \Omega)=\{ u: u, Du, \ldots, D^m u, u_t\in L_{p,q,w}((S,T)\times \Omega)\}
\]
equipped with the norm
\[
\|u\|_{W^{1,m}_{p,q,w}((S,T)\times \Omega)}=\|u_t\|_{L_{p,q,w}((S,T)\times \Omega)}+\sum_{|\alpha| \le m}\|D^\alpha u\|_{L_{p,q,w}((S,T)\times \Omega)}.
\]
We also set 
\[
\bH^{-m}_{p,q,w}((S,T)\times \Omega)=\left\{f:f=\sum_{|\alpha| \le m}D^\alpha f_\alpha, \quad f_\alpha \in L_{p,q,w}((S,T)\times \Omega)\right\},
\]
\[
\|f\|_{\bH^{-m}_{p,q,w}((S,T)\times \Omega)}=\inf\left\{\sum_{|\alpha| \le m}\|f_\alpha\|_{L_{p,q,w}((S,T)\times \Omega)}: f=\sum_{|\alpha| \le m}D^\alpha f_\alpha \right\},
\]
and 
\begin{multline*}
\cH^m_{p,q,w}((S,T)\times \Omega)\\
=\left\{ u: u_t\in \bH^{-m}_{p,q,w}((S,T)\times \Omega), \,\, D^\alpha u\in L_{p,q,w}((S,T)\times \Omega), \,\, |\alpha| \le m\right\},
\end{multline*}
\[
\|u\|_{\cH^m_{p,q,w}((S,T)\times \Omega)}=\|u_t\|_{\bH^{-m}_{p,q,w}((S,T)\times \Omega)}+\sum_{|\alpha| \le m}\|D^\alpha u\|_{L_{p,q,w}((S,T)\times \Omega)}.
\]
We denote by $\mathring\cH^m_{p,q,w}((S,T)\times \Omega)$ the closure of $C^\infty_0([S,T]\times \Omega)$ in $\cH^m_{p,q,w}((S,T)\times \Omega)$, where $C^\infty_0([S,T]\times \Omega)$ is the set of all infinitely differentiable functions defined on $[S,T]\times \Omega$ with a compact support in $[S,T]\times \Omega$.
We abbreviate $L_{p,q,w}((S,T)\times \Omega)^n=L_{p,q,w}((S,T)\times \Omega)$, $L_{p,p,w}((S,T)\times \Omega)=L_{p,w}((S,T)\times \Omega)$, and $L_{p,q,1}((S,T)\times \Omega)=L_{p,q}((S,T)\times \Omega)$, etc.

For the elliptic case, we assume that $\Omega\subset \cX_1\times \cX_2$, where $\cX_1$ and $\cX_2$ are spaces of homogeneous type in $\bR^{d_1}$ and $\bR^{d_2}$, $d_1+d_2=d$, respectively.
Let 
\[
w(x)=w_1(x')w_2(x''), \quad x'\in \cX_1, \quad x''\in \cX_2,
\]
where $w_1\in A_p(\cX_1)$ and $w_2\in A_q( \cX_2)$.
For such $w$, we define $L_{p,q,w}( \Omega)$ as the set consisting of all measurable functions $u$ defined on $\Omega$ having a finite norm
$$
\|u\|_{L_{p,q,w}(\Omega)}=\left(\int_{ \cX_2}\left(\int_{\cX_1} |u|^pI_\Omega w_1(x')\,dx'\right)^{q/p}w_2(x'')\,dx''\right)^{1/q}.
$$
We also set 
$$
W^m_{p,q,w}(\Omega)=\left\{u: D^\alpha u\in L_{p,q,w}(\Omega), \, |\alpha|\le m\right\},
$$
$$
\|u\|_{W^m_{p,q,w}(\Omega)}=\sum_{|\alpha|\le m}\| D^\alpha u\|_{L_{p,q,w}(\Omega)}.
$$
We denote by $\mathring{W}^m_{p,q,w}(\Omega)$ the closure of $C_0^\infty(\Omega)$ in $W^m_{p,q,w}(\Omega)$.

\section{Main results}		\label{sec3}
Throughout this section, we assume that the coefficients of $\cL$ are bounded:
\begin{equation}		\label{boundedness}
\big|A^{\alpha\beta}_{ij}\big| \le 
\left\{
\begin{aligned}
\delta^{-1}&\quad \text{if }\, |\alpha|=|\beta|=m,\\
K &\quad \text{if }\, |\alpha|<m \,\text{ or } \, |\beta|<m,
\end{aligned}
\right.
\end{equation}
and the leading coefficients satisfy the  Legendre-Hadamard ellipticity condition:
\begin{equation}		\label{LH}
\Re \left(\sum_{|\alpha|=|\beta|=m} \sum_{i,j=1}^n A^{\alpha\beta}_{ij}(X)\xi^\alpha \xi^\beta \overline{\eta}_i\eta_j \right) \ge \delta|\xi|^{2m}|\eta|^2  
\end{equation}
for any $X\in \bR^{d+1}$, $\xi\in \bR^d$, and $\eta\in \bC^n$.
Here, we use the notation $\Re(f)$ to denote the real part of $f$.

To state our regularity assumption on the leading coefficients, we introduce the following notation. 
For a function $\vec g=( g^1,\ldots, g^n)^{\operatorname{tr}}$ on $\bR^{d+1}$, 
we define the mean oscillation of $\vec g$ in $Q_R(X_0)$ with respect to $x$ as
\begin{equation*}		
(\vec g)^{x,\sharp}_R(X_0)=\dashint_{Q_R(X_0)}\big|\vec g(s,y)-\dashint_{B_R(x_0)}\vec g(s,z) \, dz  \big| \,dy \, ds.
\end{equation*}

\begin{assumption}[$\gamma$]		\label{0923.ass1}
There exists $R_0\in (0,1]$ such that the following hold.
\begin{enumerate}[(i)]
\item
For any $X_0=(t_0,x_0)\in\bR\times \overline{\Omega}$ and $R\in(0,R_0]$ such that either $B_R(x_0)\subset \Omega$ or $x_0\in \partial \Omega$, we have
$$
\sum_{|\alpha|=|\beta|=m}(A^{\alpha\beta})^{x,\sharp}_R(X_0)\le \gamma.
$$
\item
For any $X_0=(t_0,x_0)\in \bR\times \partial \Omega$ and $R\in (0,R_0]$, 
there is a spatial coordinate system depending on $x_0$ and $R$ such that in this new coordinate system, we have
\begin{equation*}		
\{y:{x_0}_1+\gamma R<y_1\}\cap B_R(x_0)\subset \Omega_R(x_0)\subset \{y:{x_0}_1-\gamma R<y_1\}\cap B_R(x_0),
\end{equation*}
where ${x_0}_1$ is the first coordinate of $x_0$ in the new coordinate system.
\end{enumerate}
\end{assumption}

The main results of the paper read as follows.
We note that our results hold for both bounded and unbounded domains $\Omega\subseteq \bR^d$, and that if  $\Omega=\bR^d$, 
then Assumption \ref{0923.ass1} $(\gamma)$ is understood as Assumption \ref{0923.ass1} $(\gamma)$ $(i)$.

\begin{theorem}		\label{1008@thm1}
Let $T\in (-\infty,\infty]$, $\Omega$  be a domain in $\bR^d$, and $\Omega_T\subseteq \cX$, where  $\cX$ is  a space of homogeneous type in $\bR^{d+1}$ with a doubling constant $K_2$.
Let $p\in (1,\infty)$, $K_0\ge 1$, $w\in A_p(\cX)$, and $[w]_{A_p}\le K_0$.
Then there exist constants 
\begin{align*}
&\gamma=\gamma(d,m,n,\delta,p,K_0,K_2)\in (0,1/6),\\
&\lambda_0=\lambda_0(d,m,n,\delta,p, K_0,K_2, R_0,K)>0
\end{align*}
such that, under Assumption \ref{0923.ass1} $(\gamma)$, 
for $\vec u\in \mathring{\cH}^m_{p,w}(\Omega_T)$ satisfying 
\begin{equation}	\label{160608@eq5}
\vec u_t+(-1)^m\cL\vec u+\lambda\vec u=\sum_{|\alpha| \le m}D^\alpha \vec f_\alpha \quad \text{in }\, \Omega_T, 
\end{equation}
where $\lambda\ge \lambda_0$ and $\vec f_\alpha\in L_{p,w}(\Omega_T)$,  we have 
\begin{equation}		\label{1017@e3a}
\sum_{|\alpha| \le m}\lambda^{1-\frac{|\alpha|}{2m}}\|D^\alpha \vec u\|_{L_{p,w}(\Omega_T)}\le N\sum_{|\alpha| \le m}\lambda^{\frac{|\alpha|}{2m}}\|\vec f_\alpha\|_{L_{p,w}(\Omega_T)},
\end{equation}
where $N=N(d,m,n,\delta,p,K_0,K_2)$.
Moreover, for $\lambda \ge \lambda_0$ and $\vec f_\alpha\in L_{p,w}(\Omega_T)$, there exists a unique $\vec u\in \mathring{\cH}^m_{p,w}(\Omega_T)$ satisfying \eqref{160608@eq5}.
\end{theorem}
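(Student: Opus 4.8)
The plan is to establish the a priori estimate \eqref{1017@e3a} first, and then to deduce existence and uniqueness from it. Uniqueness is immediate from \eqref{1017@e3a} applied with $\vec f_\alpha\equiv 0$. For the estimate I would follow the Caffarelli--Peral level set method as adapted to rough leading coefficients and Reifenberg flat domains in \cite{MR2835999}, but carried out with respect to the weighted measure $w\,d\mu$ on the space of homogeneous type $\cX$ rather than the Lebesgue measure. A scaling together with a Vitali-type covering of $\Omega_T$ by small cylinders $Q_r(X_0)$, $X_0\in\bR\times\overline\Omega$, localizes the problem to the two model geometries built into Assumption \ref{0923.ass1}$(\gamma)$: the interior case, where $B_R(x_0)\subset\Omega$, and the boundary case, where in the coordinate system of Assumption \ref{0923.ass1}$(\gamma)(ii)$ the domain is squeezed between the half-spaces $\{y_1>{x_0}_1+\gamma R\}$ and $\{y_1>{x_0}_1-\gamma R\}$.

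In each model geometry the heart of the matter is a comparison estimate. On a cylinder I freeze the leading coefficients $A^{\alpha\beta}$, $|\alpha|=|\beta|=m$, in the spatial variables (retaining their $t$-dependence) and, using Assumption \ref{0923.ass1}$(\gamma)(i)$, bound the resulting perturbation by $\gamma$ times local $L_q$-averages of $D^m\vec u$; the lower-order coefficients (bounded by $K$) and the zeroth-order term $\lambda\vec u$ are absorbed by taking $\lambda\ge\lambda_0$ large, which is exactly the role of $\lambda_0$ (and the reason $\lambda_0$ depends additionally on $R_0$ and $K$). Writing a solution as $\vec u=\vec v+\vec w$, where $\vec v$ solves the homogeneous $x$-independent-coefficient system with the appropriate boundary condition and $\vec w$ collects the data $\vec f_\alpha$ and the various errors, energy (Caccioppoli) estimates give $\|D^m\vec w\|_{L_2}\lesssim\gamma\|D^m\vec u\|_{L_2}+(\text{data})$, while interior and up-to-the-boundary regularity for the $x$-independent-coefficient operator (which is only measurable in $t$) gives an $L_\infty$ (or reverse Hölder) bound for $D^m\vec v$ on a smaller cylinder in terms of its $L_2$-average. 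These are essentially the estimates of \cite{MR2771670,MR2835999}, and I would import them, only adjusting exponents so that $L_q$-averages with $q$ close to $1$ appear on the right-hand side.

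Combining the comparison estimates with the Hardy--Littlewood maximal operator $\cM$ yields the key level set (``good-$\lambda$'') inequality: for every $\varepsilon>0$ there is $\gamma=\gamma(\varepsilon)$ such that, on any admissible small cylinder $Q$, if some point of $Q$ lies in both $\{\cM(|D^\alpha\vec u|^qI_{\Omega_T})\le 1\}$ and $\{\cM(|\vec f_\alpha|^qI_{\Omega_T})\le\varepsilon^q\}$ (summed over $|\alpha|\le m$ with the natural $\lambda$-weights), then $|Q\cap\{\cM(|D^\alpha\vec u|^qI_{\Omega_T})>N_1\}|<\varepsilon|Q|$. Since $w\in A_p(\cX)$ with $[w]_{A_p}\le K_0$, the $A_\infty$ property provides $w(E)/w(Q)\le C(|E|/|Q|)^\theta$ for $E\subset Q$ with $C,\theta$ depending only on $K_0,K_2$, so this measure-density dichotomy transfers verbatim to the weighted measure; feeding it into a ``crawling of ink spots'' covering lemma (Safonov--Krylov \cite{MR0563790}) refined to $(\cX,\rho,w\,d\mu)$ produces geometric decay of $w(\{\cM(|D^\alpha\vec u|^qI_{\Omega_T})>N_1^{\,k}\})$ in $k$, and summing in $k$ gives \eqref{1017@e3a}, provided $\cM$ is bounded on $L_{p/q,w}(\cX)$. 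The decisive observation is that $q$ is at our disposal: since the classes $A_p(\cX)$ are open, $w\in A_{p/q}(\cX)$ for all $q\in(1,q_0)$ with $q_0=q_0(p,K_0,K_2)>1$, so we fix such a $q$ from the start. This is exactly why the estimate holds for all of $A_p$ rather than only for $A_{p/2}$ as in the second-order results \cite{MR3225808,MR3467697}.

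For existence, I would first record the unweighted solvability in $\mathring\cH^m_p(\Omega_T)$ --- obtained by running the same scheme with $w\equiv 1$ (or, as in \cite[Section~8]{MR3812104}, from the $L_2$ theory together with the unweighted a priori estimate) --- which for $\vec f_\alpha\in C_0^\infty$ yields a solution belonging to every $\mathring\cH^m_r(\Omega_T)$. Given general $\vec f_\alpha\in L_{p,w}(\Omega_T)$, I approximate them in $L_{p,w}(\Omega_T)$ by $\vec f_\alpha^{(k)}\in C_0^\infty$ (possible since $w\in A_p$), solve to get $\vec u^{(k)}$, and apply \eqref{1017@e3a} to the differences $\vec u^{(k)}-\vec u^{(l)}$ to see that $\{\vec u^{(k)}\}$ is Cauchy in $\mathring\cH^m_{p,w}(\Omega_T)$; its limit solves \eqref{160608@eq5}. (Alternatively, the method of continuity along $(1-\tau)(\partial_t+(-1)^m\Delta^m+\lambda)+\tau(\partial_t+(-1)^m\cL+\lambda)$, $\tau\in[0,1]$, using \eqref{1017@e3a} uniformly in $\tau$ and solvability at $\tau=0$, works as well.) I expect the main obstacle to be the weighted level set step: proving the measure-density dichotomy in the boundary model under Reifenberg flatness and then pushing it through the crawling-ink-spots covering for the measure $w\,d\mu$ on the space of homogeneous type, all the while keeping the threshold $\gamma$ and the constant $N$ dependent only on $d,m,n,\delta,p,K_0,K_2$. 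The quantitative $A_\infty$ transfer and the choice of $q$ near $1$ forced by the openness of $A_p(\cX)$ are what make this possible and are the features that set the argument apart from the unweighted one in \cite{MR2835999}.
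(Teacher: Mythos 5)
Your strategy for the a priori estimate \eqref{1017@e3a} is essentially the one the paper carries out (Lemmas \ref{0929.lem1}, \ref{0929-lem1}, \ref{1015@lem1}, and \ref{1016@lem5}): self-improvement $w\in A_{p_0}(\cX)$ for some $p_0<p$, a decomposition $\vec u=\vec v+\vec w$ on interior and boundary model cylinders with $L_q$-control of $W$ (powers of $\gamma$) and an $L_\infty$ bound for $V$, the weighted measure-density dichotomy, the crawling-of-ink-spots covering, and boundedness of $\cM$ on $L_{p/q,w}$. Two minor imprecisions: the claimed energy bound $\|D^m\vec w\|_{L_2}\lesssim\gamma\|D^m\vec u\|_{L_2}+(\text{data})$ is too strong and the power of $\gamma$ that actually comes out is $\gamma^{1/(q\nu')}$ paired with an $L_{q\nu}$-average of $D^m\vec u$ (which is why the maximal functions of $|U|^{q\nu}$ and $|F|^q$ both appear in the level set), and obtaining the estimate in $L_q$ with $q$ near $1$ requires the $\cH^m_q$-solvability of the spatially-constant-coefficient auxiliary system from \cite{MR2771670}, not an energy estimate. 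You acknowledge both of these, so I do not regard them as gaps.

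There is, however, a genuine gap in your existence argument. You solve with $\vec f_\alpha^{(k)}\in C^\infty_0$, getting $\vec u^{(k)}\in\mathring\cH^m_r(\Omega_T)$ for every $r\in(1,\infty)$, and then wish to apply \eqref{1017@e3a} to $\vec u^{(k)}-\vec u^{(l)}$. But \eqref{1017@e3a} is stated only for solutions that are already in $\mathring\cH^m_{p,w}(\Omega_T)$, and membership of $\vec u^{(k)}$ in every unweighted $\mathring\cH^m_r$ does not by itself give $\vec u^{(k)}\in\mathring\cH^m_{p,w}$: when $\Omega$ is unbounded, $w\,dx$ and $dx$ are not comparable globally and the solution need not decay fast enough relative to $w$ without a quantitative argument. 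This is precisely the step the paper handles by taking $\vec f_\alpha$ bounded with bounded support (Lemma \ref{160621@lem1}), solving in $\mathring\cH^m_{p_1}(\Omega_T)$ with $p_1$ dictated by the reverse H\"older exponent of $w$, showing via H\"older that the solution is locally in $\cH^m_{p,w}$, and then establishing geometric decay on dyadic annuli with the cutoffs $\eta_k$ so that the weighted global norm is finite. Without some such decay argument (or an alternative, e.g., showing that the Riesz-type representation of the compactly supported smooth data's solution has the requisite weighted integrability), the Cauchy-sequence step cannot even be started, because the a priori estimate may not legitimately be applied to $\vec u^{(k)}$.
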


The next result is weighted $L_{p,q}$-estimates (mixed norms) for parabolic systems.

\begin{theorem}		\label{1016@thm1}
Let $T\in (-\infty,\infty]$, $\Omega$  be a domain in $\bR^d$, and $ \Omega_T\subseteq \cX_1\times \cX_2$, where $\cX_1$ and $\cX_2$ are  spaces of homogeneous type with  doubling constants $K_2'$ and $K_2''$ in $\bR^{d_1}$ and $\bR\times \bR^{d_2}$, $d_1+d_2=d$, respectively.
Let $p,q\in (1,\infty)$, $K_0\ge 1$, and 
$$
w(t,x)=w_1(x')w_2(t,x''), \quad x'\in \cX_1, \quad (t,x'')\in \cX_2,
$$
where $w_1\in A_p(\cX_1)$ with $[w_1]_{A_p}\le K_0$ and $w_2\in A_q(\cX_2)$ with $[w_2]_{A_q}\le K_0$.
Then there exist constants
\begin{align*}
&\gamma=\gamma(d,m,n,\delta,p,q,K_0, K_2', K_2'')\in (0,1/6),\\
&\lambda_0=\lambda_0(d,m,n,\delta,p,q, K_0,K_2',K_2'',R_0,K)>0,
\end{align*}
such that, under Assumption \ref{0923.ass1} $(\gamma)$, for $\vec u\in \mathring{\cH}^m_{p,q,w}(\Omega_T)$ satisfying 
\begin{equation}		\label{160621@eq1}
\vec u_t+(-1)^m\cL\vec u+\lambda\vec u=\sum_{|\alpha| \le m}D^\alpha \vec f_\alpha \quad \text{in }\, \Omega_T, 
\end{equation}
where $\lambda\ge \lambda_0$ and $\vec f_\alpha\in L_{p,q,w}(\Omega_T)$,  we have 
\begin{equation}		\label{1017@e4}
\sum_{|\alpha| \le m}\lambda^{1-\frac{|\alpha|}{2m}}\|D^\alpha \vec u\|_{L_{p,q,w}(\Omega_T)}\le N\sum_{|\alpha| \le m}\lambda^{\frac{|\alpha|}{2m}}\|\vec f_\alpha\|_{L_{p,q,w}(\Omega_T)},
\end{equation}
where $N=N(d,m,n,\delta,p,q,K_0, K_2', K_2'',d_1,d_2)$.
Moreover, for $\lambda \ge \lambda_0$ and $\vec f_\alpha\in L_{p,q,w}(\Omega_T)$, there exists a unique  $\vec u\in \mathring{\cH}^m_{p,q,w}(\Omega_T)$ satisfying \eqref{160621@eq1}.
\end{theorem}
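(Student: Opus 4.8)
\textbf{Proof proposal for Theorem \ref{1016@thm1}.}
The plan is to deduce the mixed-norm weighted estimate \eqref{1017@e4} from the unmixed weighted estimate of Theorem \ref{1008@thm1} by means of a Rubio de Francia type extrapolation argument, and then to obtain the solvability by the method of continuity. First I would observe that the triple $\cX = \cX_1 \times \cX_2$ is itself a space of homogeneous type in $\bR^{d+1}$ with doubling constant $K_2$ depending only on $K_2' K_2''$ (this is Lemma \ref{160629@lem1} cited in the excerpt), so that for \emph{any} weight $v \in A_p(\cX)$ with $[v]_{A_p} \le K_0$ Theorem \ref{1008@thm1} already supplies, with $\gamma$ and $\lambda_0$ depending on the displayed parameters, the a priori bound
\[
\sum_{|\alpha|\le m}\lambda^{1-\frac{|\alpha|}{2m}}\|D^\alpha \vec u\|_{L_{p,v}(\Omega_T)}\le N\sum_{|\alpha|\le m}\lambda^{\frac{|\alpha|}{2m}}\|\vec f_\alpha\|_{L_{p,v}(\Omega_T)}
\]
for all $\vec u \in \mathring{\cH}^m_{p,v}(\Omega_T)$ solving \eqref{160621@eq1}. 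The point of the extrapolation step is that $\gamma$ and $\lambda_0$ must be chosen \emph{uniformly} over the relevant family of weights $v$; this forces the use of the refined version of the extrapolation theorem (the one discussed before Theorem \ref{1017@@thm1}), since in the classical statement the smallness of $\gamma$ would be allowed to depend on the individual weight, which is useless here because $\gamma$ enters Assumption \ref{0923.ass1} that must be fixed in advance.

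Next I would fix the solution $\vec u$ and the data $\vec f_\alpha$ and, for each multi-index $\alpha$, apply the refined extrapolation theorem to the pair of functions $\big(\lambda^{1-\frac{|\alpha|}{2m}}|D^\alpha\vec u|,\ \sum_{|\beta|\le m}\lambda^{\frac{|\beta|}{2m}}|\vec f_\beta|\big)$: the unmixed weighted estimate above says precisely that, for every $x'' \in \cX_2$ fixed (or, in the parabolic direction, for $w_2$-slices), the $L_{p,w_1(\cX_1)}$-norm of the first function is controlled by that of the second, with constant depending only on $[w_1]_{A_p}$ and the fixed structural data. Extrapolation in the $\cX_1$ variable then upgrades this to the same inequality in $L_{q_1, w_1}$ for all $q_1$, and in particular it yields the inequality for the exponent pattern needed to bring in the second weight. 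A clean way to package this in the mixed setting is to run extrapolation once in the inner variable $x' \in \cX_1$ against $A_p(\cX_1)$ and once in the outer variable $(t,x'') \in \cX_2$ against $A_q(\cX_2)$, using at each stage the boundedness on constant-exponent weighted spaces provided by the previous stage; the product structure $w = w_1 w_2$ and Fubini make the two applications independent, and the final constant $N$ depends only on $d,m,n,\delta,p,q,K_0,K_2',K_2'',d_1,d_2$ as claimed, while $\gamma$ and $\lambda_0$ are inherited from the worst case over the one-parameter families of weights produced by the two extrapolations (these families are controlled in $A_p$- and $A_q$-characteristic by a constant depending only on $p,q,K_0$).

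Once the a priori estimate \eqref{1017@e4} is established, the solvability follows by the standard continuity argument, exactly as in \cite[Section 8]{MR3812104}. One connects $\cL + \lambda$ to the constant-coefficient operator $(-\Delta)^m + \lambda$ through the family $\cL_\tau = (1-\tau)(-\Delta)^m + \tau\cL$, $\tau \in [0,1]$; each $\cL_\tau$ has leading coefficients satisfying the same Legendre--Hadamard condition and the same Assumption \ref{0923.ass1} $(\gamma)$ (the convex combination does not increase the mean oscillation), so the a priori estimate holds uniformly in $\tau$. Solvability at $\tau = 0$ in the weighted space $\mathring{\cH}^m_{p,q,w}(\Omega_T)$ is obtained either from the result for $(-\Delta)^m$ on the whole space and a localization/Reifenberg-flattening argument, or by first solving in the unweighted space and using density together with the a priori bound; the method of continuity then propagates existence to $\tau = 1$, and uniqueness is immediate from \eqref{1017@e4}. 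The main obstacle is the uniformity in the weight in the extrapolation step: one must verify that the refined extrapolation theorem can indeed be applied with the smallness parameter $\gamma$ and the threshold $\lambda_0$ chosen before the weight, and that the auxiliary weights generated by the Rubio de Francia iteration have $A_p$- (resp. $A_q$-) characteristics bounded purely in terms of $p$, $q$, and $K_0$; everything else is bookkeeping with Fubini and the product doubling constant.
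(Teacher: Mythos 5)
Your overall plan---reduce to Theorem~\ref{1008@thm1} through the refined extrapolation theorem and handle solvability separately---is the right one, and you correctly identify the essential point that $\gamma$ and $\lambda_0$ must be chosen uniformly over the family of auxiliary weights, which is exactly why the refined extrapolation of Theorem~\ref{1017@@thm1} (rather than the classical statement) is needed. However, there are two places where your argument departs from what actually works.

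First, in the extrapolation step you assert that ``for every $x''\in \cX_2$ fixed... the $L_{p,w_1(\cX_1)}$-norm of the first function is controlled by that of the second.'' This is not a consequence of the unmixed estimate: Theorem~\ref{1008@thm1} controls the full space--time integral $\|\cdot\|_{L_{p,\tilde w}(\Omega_T)}$, not the inner norm slice by slice in the outer variable. You then offer a ``double extrapolation'' (once in $\cX_1$, once in $\cX_2$) as an alternative, but this is both unnecessary and unclear: there is no reason to touch the inner exponent (it is $p$ throughout, and $w_1\in A_p(\cX_1)$ is the \emph{given} weight, not an auxiliary one). The argument the paper uses is a single outer-variable extrapolation after a Fubini factorization: fix $w_1$, take an arbitrary $\tilde w_2 \in A_p(\cX_2)$ with $[\tilde w_2]_{A_p}\le \cK_0$, note that $\tilde w := w_1\tilde w_2 \in A_p(\cX_1\times\cX_2)$ with characteristic bounded by a constant $\tilde K_0(p,q,K_0,K_2',K_2'')$ (Lemma~\ref{160629@lem1}), apply Theorem~\ref{1008@thm1} with that weight, and then observe that since both exponents equal $p$ the resulting inequality reads
\[
\|\psi\|_{L_{p,\tilde w_2}(\cX_2)}\le N_2\|\phi\|_{L_{p,\tilde w_2}(\cX_2)},
\]
where $\psi(t,x'')$ and $\phi(t,x'')$ are the inner $L_{p,w_1}(\cX_1)$-norms of $\lambda^{1-|\alpha|/2m}D^\alpha\vec u$ and $\lambda^{|\alpha|/2m}\vec f_\alpha$ respectively. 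Since this holds for every such $\tilde w_2$, Theorem~\ref{1017@@thm1} applied once (in the outer variable, from exponent $p$ to exponent $q$, with the given $w_2\in A_q(\cX_2)$) yields \eqref{1017@e4}.

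Second, for solvability your method-of-continuity sketch does not close: it requires existence for a base operator \emph{in the weighted mixed-norm space}, and your suggestion to ``first solve in the unweighted space and use density together with the a priori bound'' is a gesture at the right idea but leaves out the real work. What the paper does is: approximate $\vec f_\alpha$ by bounded compactly supported data (Lemma~\ref{160621@lem1}); solve in the \emph{unweighted} space $\mathring\cH^m_{p_1}(\Omega_T)$ for a suitably large $p_1$ determined by the reverse H\"older exponents of $w_1,w_2$ (Lemma~\ref{160612@lem1}); and then prove that this unweighted solution actually lies in $\mathring\cH^m_{p,q,w}(\Omega_T)$ by a quantitative decay argument using the cutoffs $\eta_k$ vanishing on $\cB_{2^kR}$, exploiting the factorial-type gain $2^{-k(k-1)/2}$ to absorb the polynomial growth of the weight. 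This decay argument is the substantive step that replaces the ``density'' hand-wave; without it one cannot conclude that the unweighted solution belongs to the weighted space.
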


If unmixed norms are considered, the elliptic case as in the theorem below is covered by  \cite{MR2835999}.
Here, we present  the mixed norm case for elliptic systems, which follows easily from Theorem \ref{1016@thm1} and a standard argument  in the proof of  \cite[Theorem 2.6]{MR2650802}.

\begin{theorem}		\label{160622@thm1}
Let $\Omega$ be a domain in $\bR^d$  and $ \Omega\subseteq \cX_1 \times \cX_2$, where $\cX_1$ and $ \cX_2$ are  spaces of homogeneous type with  doubling constants $K_2'$ and $K_2''$ in $\bR^{d_1}$ and $\bR^{d_2}$, $d_1+d_2=d$, respectively.
Let $p,q\in (1,\infty)$, $K_0\ge 1$, and 
$$
w(x)=w_1(x')w_2(x''), \quad x'\in \cX_1, \quad x''\in \cX_2,
$$
where $w_1\in A_p(\cX_1)$ with $[w_1]_{A_p}\le K_0$ and $w_2\in A_q( \cX_2)$ with $[w_2]_{A_q}\le K_0$.
Then there exist constants
\begin{align*}
&\gamma=\gamma(d,m,n,\delta,p,q,K_0, K_2', K_2'')\in (0,1/6),\\
&\lambda_0=\lambda_0(d,m,n,\delta,p,q,K_0, K_2',K_2'',R_0,K)>0,
\end{align*}
such that, under Assumption \ref{0923.ass1} $(\gamma)$, 
for $\vec u\in \mathring{W}^m_{p,q,w}(\Omega)$ satisfying 
\begin{equation}		\label{160622@@eq1}
(-1)^m\cL\vec u+\lambda\vec u=\sum_{|\alpha| \le m}D^\alpha \vec f_\alpha \quad \text{in }\, \Omega, 
\end{equation}
where $\lambda\ge \lambda_0$ and $\vec f_\alpha\in L_{p,q,w}(\Omega)$, we have 
$$
\sum_{|\alpha| \le m}\lambda^{1-\frac{|\alpha|}{2m}}\|D^\alpha \vec u\|_{L_{p,q,w}(\Omega)}\le N\sum_{|\alpha| \le m}\lambda^{\frac{|\alpha|}{2m}}\|\vec f_\alpha\|_{L_{p,q,w}(\Omega)},
$$
where $N=N(d,m,n,\delta,p,q,K_0, K_2', K_2'',d_1,d_2)$.
Moreover, for $\lambda \ge \lambda_0$ and $\vec f_\alpha\in L_{p,q,w}(\Omega)$, there exists a unique
$\vec u\in \mathring{W}^m_{p,q,w}(\Omega)$ satisfying \eqref{160622@@eq1}.
\end{theorem}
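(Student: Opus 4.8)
\emph{The plan is to} realize the elliptic solution as a time-independent function on a cylinder, multiply it by a temporal cutoff, and invoke the parabolic estimate of Theorem~\ref{1016@thm1}. First I would move to the parabolic framework: set $\tilde\cX_2:=\bR\times\cX_2$, a space of homogeneous type in $\bR\times\bR^{d_2}$ (with the parabolic distance) whose doubling constant is at most $2^{2m}K_2''$, and put $\tilde w_2(t,x''):=w_2(x'')$, $\tilde w(t,x):=w_1(x')\tilde w_2(t,x'')$. Since $\cB_r(t_0,x_0'')\cap\tilde\cX_2=(t_0-r^{2m},t_0+r^{2m})\times B_r^{\cX_2}(x_0'')$, the $t$-average in the $A_q$-quotient is trivial, so $\tilde w_2\in A_q(\tilde\cX_2)$ with $[\tilde w_2]_{A_q(\tilde\cX_2)}=[w_2]_{A_q(\cX_2)}\le K_0$; moreover $\Omega_T\subseteq\cX_1\times\tilde\cX_2$ for every $T$. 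Let $\gamma^\ast$, $\lambda_0^\ast$, $N$ be the constants produced by Theorem~\ref{1016@thm1} for the pair $(\cX_1,\tilde\cX_2)$; after rewriting the dependence on $2^{2m}K_2''$ as dependence on $m$ and $K_2''$, these have exactly the dependencies claimed in the statement.

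For the a priori estimate, fix $\zeta\in C_0^\infty(\bR)$ with $\zeta\equiv1$ on $(-1,1)$ and $\operatorname{supp}\zeta\subset(-2,2)$, put $T:=3$, and set $\vec v(t,x):=\zeta(t)\vec u(x)$ on $\Omega_T$. Approximating $\vec u$ by $C_0^\infty(\Omega)$-functions shows $\vec v\in\mathring\cH^m_{p,q,\tilde w}(\Omega_T)$, and because $\tilde w_2$ and $\vec u$ do not depend on $t$,
\[
\|D^\alpha\vec v\|_{L_{p,q,\tilde w}(\Omega_T)}=\|\zeta\|_{L_q(\bR)}\|D^\alpha\vec u\|_{L_{p,q,w}(\Omega)}\ \ (|\alpha|\le m),\qquad \|\zeta'\vec u\|_{L_{p,q,\tilde w}(\Omega_T)}=\|\zeta'\|_{L_q(\bR)}\|\vec u\|_{L_{p,q,w}(\Omega)}.
\]
Since the coefficients of $\cL$ depend only on $x$, \eqref{160622@@eq1} gives
\[
\vec v_t+(-1)^m\cL\vec v+\lambda\vec v=\sum_{|\alpha|\le m}D^\alpha(\zeta\vec f_\alpha)+\zeta'\vec u\quad\text{in }\ \Omega_T,
\]
that is, $\vec v$ solves \eqref{160621@eq1} with the same $\lambda$ and with $\vec f_\alpha$ replaced by $\zeta\vec f_\alpha$ for $|\alpha|\ge1$ and $\vec f_0$ by $\zeta\vec f_0+\zeta'\vec u$. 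Applying Theorem~\ref{1016@thm1} (valid as soon as $\lambda\ge\lambda_0^\ast$ and $\gamma\le\gamma^\ast$) and dividing by $\|\zeta\|_{L_q(\bR)}$ yields
\[
\sum_{|\alpha|\le m}\lambda^{1-\frac{|\alpha|}{2m}}\|D^\alpha\vec u\|_{L_{p,q,w}(\Omega)}\le N\sum_{|\alpha|\le m}\lambda^{\frac{|\alpha|}{2m}}\|\vec f_\alpha\|_{L_{p,q,w}(\Omega)}+N_0\|\vec u\|_{L_{p,q,w}(\Omega)},
\]
where $N_0:=N\|\zeta'\|_{L_q(\bR)}/\|\zeta\|_{L_q(\bR)}$. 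As the $|\alpha|=0$ term on the left equals $\lambda\|\vec u\|_{L_{p,q,w}(\Omega)}$, setting $\gamma:=\gamma^\ast$ and $\lambda_0:=\max\{\lambda_0^\ast,2N_0\}$ absorbs the last term and produces the desired estimate (with $N$ enlarged by a factor $2$).

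Uniqueness follows at once from this estimate with $\vec f_\alpha\equiv0$. For existence I would run the method of continuity along $\cL_\tau:=(1-\tau)(-\Delta)^mI_n+\tau\cL$, $\tau\in[0,1]$: each $\cL_\tau$ has leading coefficients satisfying \eqref{LH} (with $\delta$ replaced by $\min\{1,\delta\}$ and a possibly smaller boundedness constant) and Assumption~\ref{0923.ass1}$(\gamma)$, so the a priori estimate above holds uniformly in $\tau$ with the same $N,\lambda_0$; hence solvability of \eqref{160622@@eq1} for $\cL$ reduces to that for the constant-coefficient model $\cL_0=(-\Delta)^mI_n$, which is covered by \cite{MR2835999} and, following the proof of \cite[Theorem~2.6]{MR2650802}, can alternatively be extracted from the parabolic solvability in Theorem~\ref{1016@thm1}. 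The only genuinely nonroutine point is the weight bookkeeping of the first paragraph --- that a $t$-independent weight keeps its $A_q$-characteristic when transferred to the product space of homogeneous type $\bR\times\cX_2$, and that the parabolic doubling constant $2^{2m}K_2''$ is absorbed into the $m$- and $K_2''$-dependence --- so that the final constants depend only on the quantities listed in the theorem; granting this, the passage to the elliptic estimate is the standard temporal-cutoff reduction.
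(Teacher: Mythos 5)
Your proposal is correct and follows essentially the same approach the paper intends: lift the elliptic data to the parabolic setting over $\tilde{\cX}_2 = \bR \times \cX_2$ with the $t$-independent weight $\tilde{w}_2$ (whose $A_q$-characteristic is verified to equal $[w_2]_{A_q(\cX_2)}$), multiply the elliptic solution by a temporal cutoff, apply Theorem~\ref{1016@thm1}, and absorb the commutator term $\zeta'\vec u$ by enlarging $\lambda_0$. The existence discussion via method of continuity is at the same level of detail as the paper's own brief reference to ``a standard argument in the proof of \cite[Theorem~2.6]{MR2650802}'' and is consistent with it.
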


\section{Some auxiliary results}			\label{sec4}

The results in this section can be found, for instance, in \cite{MR1232192,MR3243734}.
Here, we present those results in a form convenient for later use along with some of their proofs.
In particular, we specify the parameters on which the constants $N$ and $\mu$ in the results below depend.
For example, we assume that $[w]_{A_p} \le K_0$, $K_0 \ge 1$, and show that the constants $N$ in the inequalities depend on $K_0$ rather than $[w]_{A_q}$.

In this section, we assume that $\cX$ is a space of homogeneous type in $\bR^{d+1}$ (resp. $\bR^d$) with the distance $\rho$ in \eqref{eq0215_01} (resp. the usual Euclidean distance), the Lebesgue measure, and a doubling constant $K_2$.
In the case that $\cX \subset \bR^{d+1}$, as we recall, $\cB_r^{\cX}(X)$ is a ball in $\cX$ defined by 
$$
\cB_r^{\cX}(X)=\{Y\in \cX:\rho(X,Y)<r\}.
$$

\begin{lemma}[Reverse H\"older's inequality]		\label{1016@lem1}
Let $p\in (1,\infty)$, $K_0 \ge 1$, $w\in A_p(\cX)$, and  $[w]_{A_p}\le K_0$.
Then there exist constants $\mu_0>1$ and $N>0$, depending only on $p$, $K_0$, and $K_2$, such that 
$$
\bigg(\dashint_{\cB_r^{\cX}(X)} w^{\mu_0}\,dY\bigg)^{\frac{1}{\mu_0}}\le N\dashint_{\cB_r^{\cX}(X)}w\,dY
$$
for any $X\in \cX$ and $r>0$.
\end{lemma}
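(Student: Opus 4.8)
The plan is to reduce the statement to the classical reverse Hölder inequality for $A_p$ weights on spaces of homogeneous type, the only point requiring care being the \emph{uniform} dependence of the constants $\mu_0$ and $N$ on the $A_p$-bound $K_0$ rather than on the individual weight $w$. The starting observation is the standard self-improving property: if $w\in A_p(\cX)$ with $[w]_{A_p}\le K_0$, then $w$ satisfies a reverse Hölder inequality, and the exponent $\mu_0>1$ and the constant $N$ in that inequality can be taken to depend only on $p$, $K_0$, and the doubling constant $K_2$ of $(\cX,\rho,\mu)$. I would cite this in the form stated in \cite{MR1232192} or \cite{MR3243734}, but since the excerpt asks us to pin down the parameters, I would also sketch the mechanism.

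First I would record the \emph{weak reverse Hölder / $A_\infty$ consequence}: from $[w]_{A_p}\le K_0$ one gets, for every ball $\cB=\cB_r^{\cX}(X)$ and every measurable $E\subset\cB$,
\[
\frac{w(E)}{w(\cB)}\le C_1\left(\frac{\mu(E)}{\mu(\cB)}\right)^{\theta}
\]
with $C_1$ and $\theta\in(0,1)$ depending only on $p$, $K_0$, $K_2$; this is the usual deduction $A_p\subset A_\infty$ with quantitative control. From this ``$A_\infty$ with explicit constants'' one runs the standard Calderón–Zygmund argument on the space of homogeneous type to produce the reverse Hölder exponent. Concretely, fix a ball $\cB_0=\cB_{r}^{\cX}(X)$ and apply the Calderón–Zygmund decomposition (valid on $(\cX,\rho,\mu)$ by the doubling property) to $w$ on $\cB_0$ at heights $\lambda>\lambda_0:=(w)_{\cB_0}$; the good-$\lambda$ inequality coming from the displayed $A_\infty$ estimate yields, for some $\varepsilon=\varepsilon(p,K_0,K_2)>0$,
\[
\int_{\cB_0} w^{1+\varepsilon}\,d\mu\le N\,(w)_{\cB_0}^{\varepsilon}\,w(\cB_0),
\]
which after dividing by $\mu(\cB_0)$ is exactly the claimed inequality with $\mu_0=1+\varepsilon$. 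The one technical subtlety on a space of homogeneous type is that the Calderón–Zygmund cubes are only ``quasi-balls'' with a fixed dilation/overlap constant depending on $K_1=1$ and $K_2$; this affects the numerical constants but not the structure, and all the resulting constants remain functions of $p$, $K_0$, $K_2$ alone.

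The step I expect to be the main obstacle — really the only nontrivial point — is making the dependence on $K_0$ (and \emph{not} on $[w]_{A_p}$ directly, nor on $w$) completely explicit throughout: one must check that the $A_\infty$ constants $C_1,\theta$ above, the Calderón–Zygmund overlap constants, and the good-$\lambda$ threshold all depend only on $p$, $K_0$, $K_2$. This is where I would be careful to quote the correct quantitative version from the literature; once the quantitative $A_p\Rightarrow A_\infty$ passage is in hand with constants depending only on $(p,K_0,K_2)$, the reverse Hölder exponent $\mu_0$ and constant $N$ inherit the same dependence, and the proof is complete for every $X\in\cX$ and $r>0$ uniformly.
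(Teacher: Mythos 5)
Your proposal is correct and matches the paper's approach: the paper's entire proof is a citation to \cite[Theorem 7.3.3]{MR3243734} and \cite[Theorem 3, p. 212]{MR1232192}, exactly the two references you invoke, and the sketch you add (quantitative $A_p\Rightarrow A_\infty$, Calder\'on--Zygmund decomposition on a space of homogeneous type, good-$\lambda$ inequality) is just an unpacking of what those references contain. Your emphasis on tracking that $\mu_0$ and $N$ depend only on $(p,K_0,K_2)$ rather than on $w$ itself is precisely the point the paper flags in the introduction to Section~\ref{sec4}, so you have correctly identified the only delicate issue.
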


\begin{proof}
See \cite[Theorem 7.3.3]{MR3243734} or \cite[Theorem 3, p. 212]{MR1232192}.
\end{proof}

\begin{lemma}		\label{1016@lem2}
Let $p\in (1,\infty)$, $K_0 \ge 1$, $w\in A_p(\cX)$, and  $[w]_{A_p}\le K_0$.
Then there exist constants $p_0\in (1,p)$ and $N>0$, depending only on $p$, $K_0$, and $K_2$, such that $w\in A_{p_0}(\cX)$ and 
\begin{equation}		\label{160614@eq2}
\bigg(\dashint_{\cB_r^{\cX}(X)}w^{-\frac{1}{p_0-1}}\,dY\bigg)^{p_0-1}\le N \bigg(\dashint_{\cB_r^{\cX}(X)}w^{-\frac{1}{p-1}}\,dY\bigg)^{p-1}
\end{equation}
for any $X\in \cX$ and $r>0$.
\end{lemma}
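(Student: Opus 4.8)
The plan is to exploit the standard duality between $A_p$ and $A_{p'}$ weights together with the reverse H\"older inequality of Lemma \ref{1016@lem1}. Write $p'=p/(p-1)$ and set $\sigma=w^{-1/(p-1)}$. A purely algebraic computation (which holds verbatim on a space of homogeneous type, since it only involves the definition of $[\,\cdot\,]_{A_p}$ over balls) shows that $w\in A_p(\cX)$ if and only if $\sigma\in A_{p'}(\cX)$, with
\[
[\sigma]_{A_{p'}}=[w]_{A_p}^{1/(p-1)}\le K_0^{1/(p-1)}.
\]
Thus $\sigma$ is an $A_{p'}(\cX)$ weight with characteristic controlled by $p$ and $K_0$.

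Next I would apply Lemma \ref{1016@lem1} to $\sigma\in A_{p'}(\cX)$ to obtain constants $\mu_0>1$ and $N>0$, depending only on $p'$, $[\sigma]_{A_{p'}}$, and $K_2$ — hence only on $p$, $K_0$, $K_2$ — such that
\[
\bigg(\dashint_{\cB_r^{\cX}(X)}\sigma^{\mu_0}\,dY\bigg)^{1/\mu_0}\le N\dashint_{\cB_r^{\cX}(X)}\sigma\,dY
\]
for all $X\in\cX$ and $r>0$. The point is then to choose $p_0$ so that $\sigma^{\mu_0}$ is exactly the dual weight $w^{-1/(p_0-1)}$ appearing in \eqref{160614@eq2}: take
\[
p_0=1+\frac{p-1}{\mu_0}.
\]
Since $\mu_0>1$ this gives $1<p_0<p$, and $\mu_0(p_0-1)=p-1$, so that $\sigma^{\mu_0}=w^{-\mu_0/(p-1)}=w^{-1/(p_0-1)}$.

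With this choice, raising the reverse H\"older inequality to the power $p_0-1$ and using $\mu_0(p_0-1)=p-1$ yields
\[
\bigg(\dashint_{\cB_r^{\cX}(X)}w^{-\frac{1}{p_0-1}}\,dY\bigg)^{p_0-1}=\bigg[\bigg(\dashint_{\cB_r^{\cX}(X)}\sigma^{\mu_0}\,dY\bigg)^{1/\mu_0}\bigg]^{p-1}\le N^{p-1}\bigg(\dashint_{\cB_r^{\cX}(X)}w^{-\frac{1}{p-1}}\,dY\bigg)^{p-1},
\]
which is \eqref{160614@eq2} with $N$ replaced by $N^{p-1}$. Finally, multiplying this inequality by $\dashint_{\cB_r^{\cX}(X)}w\,dY$ and taking the supremum over $X$ and $r$ gives
\[
[w]_{A_{p_0}}\le N^{p-1}\sup_{\substack{X\in\cX\\ r>0}}\bigg(\dashint_{\cB_r^{\cX}(X)}w\,dY\bigg)\bigg(\dashint_{\cB_r^{\cX}(X)}w^{-\frac{1}{p-1}}\,dY\bigg)^{p-1}=N^{p-1}[w]_{A_p}\le N^{p-1}K_0<\infty,
\]
so $w\in A_{p_0}(\cX)$. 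There is no genuine obstacle: the only points needing care are the bookkeeping of the $A_p$--$A_{p'}$ duality identity on a space of homogeneous type and the algebraic calibration of $p_0$ against the exponent $\mu_0$ produced by Lemma \ref{1016@lem1}; all resulting constants depend only on $p$, $K_0$, and $K_2$, as claimed.
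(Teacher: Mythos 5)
Your proof is correct and follows essentially the same route as the paper: pass to the dual weight $\sigma=w^{-1/(p-1)}\in A_{p'}$ with characteristic controlled by $K_0$, apply the reverse H\"older inequality of Lemma \ref{1016@lem1} to $\sigma$, and calibrate $p_0=1+(p-1)/\mu_0$ so that $\sigma^{\mu_0}=w^{-1/(p_0-1)}$. Two small remarks: the phrase ``raising \ldots to the power $p_0-1$'' should read ``to the power $p-1$'' (your displayed computation actually uses $p-1$, and the arithmetic is right), and the paper only records the one-sided bound $[\sigma]_{A_{p'}}\le[w]_{A_p}^{1/(p-1)}$ rather than the equality, since that is all that is needed; your explicit final check that $w\in A_{p_0}$ is a welcome addition that the paper leaves implicit.
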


\begin{proof}
Let $p'=\frac{p}{p-1}$.
Since $w\in A_p(\cX)$, we obtain that   $v=w^{-\frac{1}{p-1}}\in A_{p'}(\cX)$.
Indeed, for any $X\in \cX$ and $r>0$, we have 
$$
\bigg(\dashint_{\cB^{\cX}_r(X)}v\,dY\bigg)\bigg(\dashint_{\cB^{\cX}_r(X)}v^{-\frac{1}{p'-1}}\,dY\bigg)^{p'-1}
$$
$$
=\bigg(\dashint_{\cB^{\cX}_r(X)}w^{-\frac{1}{p-1}}\,dY\bigg)\bigg(\dashint_{\cB^{\cX}_r(X)}w\,dY\bigg)^{\frac{1}{p-1}}
\le [w]_{A_p}^{\frac{1}{p-1}}\le K_0^{\frac{1}{p-1}}.
$$
Therefore, by Lemma \ref{1016@lem1}, we have 
\[
\bigg(\dashint_{\cB^{\cX}_r(X)}v^{\mu_0}\,dY\bigg)^{\frac{1}{\mu_0}}\le N\dashint_{\cB^{\cX}_r(X)}v\,dY,
\]
where $(N,\mu_0)=(N,\mu_0)(p, K_0, K_2)$.
By taking $p_0=\frac{p-1}{\mu_0}+1\in (1,p)$ in the above inequality, we obtain \eqref{160614@eq2}.
\end{proof}

\begin{lemma}		\label{1015@lem3}
Let $p\in (1,\infty)$, $K_0 \ge 1$, $w\in A_p(\cX)$, and  $[w]_{A_p}\le K_0$. 
Then there exist  constants $\mu_1\in (0,1)$ and  $N_1>0$, depending only on $p$, $K_0$, and $K_2$, such that for any measurable set $E\subset \cX$, we have 
\[
\frac{1}{K_0}\left(\frac{|E\cap \cB_r^{\cX}(X)|}{|\cB_r^{\cX}(X)|}\right)^p\le \frac{w(E\cap \cB_r^{\cX}(X))}{w(\cB_r^{\cX}(X))}\le N_1\left(\frac{|E\cap \cB_r^{\cX}(X)|}{|\cB_r^{\cX}(X)|}\right)^{\mu_1}
\]
for any $X\in \cX$ and $r>0$.
\end{lemma}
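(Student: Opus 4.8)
The plan is to derive both inequalities from the Muckenhoupt structure of $w$ together with the reverse Hölder inequality established in Lemma \ref{1016@lem1}. Write $\cB = \cB_r^{\cX}(X)$ for brevity and $F = E\cap \cB$.

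For the left-hand inequality, I would start from the elementary bound obtained by Hölder's inequality: for any measurable $F\subset \cB$,
\[
|F| = \int_F 1\,dY = \int_F w^{1/p}w^{-1/p}\,dY \le \left(\int_F w\,dY\right)^{1/p}\left(\int_\cB w^{-\frac{1}{p-1}}\,dY\right)^{(p-1)/p}.
\]
Raising to the $p$-th power and dividing by $|\cB|^p$ gives
\[
\left(\frac{|F|}{|\cB|}\right)^p \le \frac{w(F)}{|\cB|}\cdot\left(\frac{1}{|\cB|}\int_\cB w^{-\frac{1}{p-1}}\,dY\right)^{p-1} = \frac{w(F)}{w(\cB)}\cdot\left(\dashint_\cB w\,dY\right)\left(\dashint_\cB w^{-\frac{1}{p-1}}\,dY\right)^{p-1}.
\]
The last product is exactly an $A_p$ Muckenhoupt quotient for the ball $\cB$, hence bounded by $[w]_{A_p}\le K_0$. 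Rearranging yields $\frac{1}{K_0}\left(\frac{|F|}{|\cB|}\right)^p \le \frac{w(F)}{w(\cB)}$, which is the desired lower bound.

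For the right-hand inequality, I would invoke Lemma \ref{1016@lem1} to get $\mu_0>1$ and $N$, depending only on $p,K_0,K_2$, with $\big(\dashint_\cB w^{\mu_0}\big)^{1/\mu_0}\le N\dashint_\cB w$. Then by Hölder's inequality with exponents $\mu_0$ and $\mu_0'=\mu_0/(\mu_0-1)$,
\[
w(F) = \int_F w\,dY \le \left(\int_\cB w^{\mu_0}\,dY\right)^{1/\mu_0}|F|^{1/\mu_0'} = |\cB|^{1/\mu_0}\left(\dashint_\cB w^{\mu_0}\,dY\right)^{1/\mu_0}|F|^{1/\mu_0'}.
\]
Applying the reverse Hölder bound and writing $|F|^{1/\mu_0'} = |F|\,|F|^{-1/\mu_0} $... more cleanly, substitute the reverse Hölder estimate to get $w(F)\le N|\cB|^{1/\mu_0}\big(\dashint_\cB w\big)|F|^{1/\mu_0'} = N\,w(\cB)\,|\cB|^{1/\mu_0 - 1}|F|^{1-1/\mu_0} = N\,w(\cB)\left(\frac{|F|}{|\cB|}\right)^{1-1/\mu_0}$. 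Setting $\mu_1 = 1-1/\mu_0\in(0,1)$ and $N_1 = N$ gives $\frac{w(F)}{w(\cB)}\le N_1\left(\frac{|F|}{|\cB|}\right)^{\mu_1}$, completing the argument. Note $\mu_1$ and $N_1$ depend only on $p,K_0,K_2$ through those from Lemma \ref{1016@lem1}, as required.

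The argument is essentially routine once Lemma \ref{1016@lem1} is in hand; the only point requiring mild care is bookkeeping the exponents in the second Hölder application so that the power of $|F|/|\cB|$ comes out as $1-1/\mu_0$ rather than, say, $1/\mu_0'$ in disguise, and confirming that all constants genuinely reduce to $d,m$-independent data (they do, since $K_2$ already encodes the relevant dimensional information). No substantive obstacle is anticipated.
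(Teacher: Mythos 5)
Your proof is correct and follows the same route as the paper: Hölder plus the $A_p$ condition for the lower bound, and Hölder plus the reverse Hölder inequality of Lemma \ref{1016@lem1} for the upper bound, yielding the same exponent $\mu_1=(\mu_0-1)/\mu_0$. (One small remark: $1-1/\mu_0$ \emph{is} $1/\mu_0'$, so the distinction you flag at the end is vacuous, but the computation itself is fine.)
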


\begin{proof}
From H\"older's inequality and the definition of $A_p$, it follows that 
\begin{align*}
|E\cap \cB_r^{\cX}(X)|&=\int_{E\cap \cB_r^{\cX}(X)}w^{\frac{1}{p}}w^{-\frac{1}{p}}\,dY\\
&\le w(E\cap \cB_r^{\cX}(X))^{\frac{1}{p}}\bigg(\dashint_{\cB_r^{\cX}(X)}w^{-\frac{1}{p-1}}\,dY\bigg)^{\frac{p-1}{p}}|\cB_r^{\cX}(X)|^{\frac{p-1}{p}}\\
& \le [w]_{A_p}^{\frac{1}{p}} w(E\cap \cB_r^{\cX}(X))^{\frac{1}{p}}\bigg(\dashint_{\cB_r^{\cX}(X)}w\,dY\bigg)^{-\frac{1}{p}}|\cB_r^{\cX}(X)|^{\frac{p-1}{p}}\\
&=[w]_{A_p}^{\frac{1}{p}}\left(\frac{w(E\cap \cB_r^{\cX}(X))}{w(\cB_r^{\cX}(X))}\right)^{\frac{1}{p}}|\cB_r^{\cX}(X)|,
\end{align*}
which gives  the first inequality.
For the second inequality, we observe that H\"older's inequality and Lemma \ref{1016@lem1} imply that
\begin{align*}
w(E\cap \cB_r^{\cX}(X))&\le |E\cap \cB_r^{\cX}(X)|^{\frac{\mu_0-1}{\mu_0}}|\cB_r^{\cX}(X)|^{\frac{1}{\mu_0}}\bigg(\dashint_{\cB_r^{\cX}(X)} w^{\mu_0}\, dY\bigg)^{\frac{1}{\mu_0}}\\
&\le N|E\cap \cB_r^{\cX}(X)|^{\frac{\mu_0-1}{\mu_0}}|\cB_r^{\cX}(X)|^{\frac{1}{\mu_0}}\bigg(\dashint_{\cB_r^{\cX}(X)} w\, dY\bigg)\\
&=N\left(\frac{|E\cap \cB_r^{\cX}(X)|}{|\cB_r^{\cX}(X)|}\right)^{\frac{\mu_0-1}{\mu_0}}w(\cB_r^{\cX}(X)),
\end{align*}
where $(N,\mu_0)=(N,\mu_0)(p,K_0,K_2)$.
The lemma is proved.
\end{proof}

The following Hardy-Littlewood maximal function theorem with $A_p$ weights was obtained in \cite{MR0740173}.
Below, we denote the maximal function of $f$ defined on $\cX$ by 
\begin{equation}		\label{160617@eq1}
\cM f(X)=\sup_{\substack{Z\in \cX, r>0\\ X\in \cB_r^{\cX}(Z)}}\dashint_{\cB_r^{\cX}(Z)}|f(Y)|\, dY.
\end{equation}

\begin{theorem}		\label{1008@thm5}
Let $p\in (1,\infty)$, $K_0 \ge 1$, $w\in A_p(\cX)$, and $[w]_{A_p}\le K_0$.
Then for any $f\in L_{p,w}(\cX)$, we have 
\[
\|\cM f\|_{L_{p,w}(\cX)}\le N\|f\|_{L_{p,w}(\cX)},
\]
where $N=N(p,K_0, K_2)>0$.
\end{theorem}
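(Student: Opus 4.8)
The plan is to reproduce the classical Muckenhoupt argument, carried out on the space of homogeneous type $\cX$ and with every constant tracked so that it depends only on $p$, $K_0$, $K_2$. The two genuinely structural inputs are already in hand: the self-improvement $w\in A_p(\cX)\Rightarrow w\in A_{p_0}(\cX)$ for some $p_0=p_0(p,K_0,K_2)\in(1,p)$, with $[w]_{A_{p_0}}\le N(p,K_0,K_2)$ (Lemma \ref{1016@lem2} together with $[w]_{A_p}\le K_0$), and the fact that $w\,dY$ is a doubling measure with doubling constant controlled by $p,K_0,K_2$, which I would extract from the first inequality of Lemma \ref{1015@lem3} applied to $E=\cB_r^\cX(X)\subset\cB_{5r}^\cX(X)$ together with the bound $|\cB_{5r}^\cX(X)|\le K_2^3|\cB_r^\cX(X)|$.

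First I would reduce to the truncated operators $\cM_N f$, defined as in \eqref{160617@eq1} but with the radius restricted to $r\le N$; since $\cM_N f\uparrow\cM f$, by monotone convergence it suffices to bound $\|\cM_N f\|_{L_{p,w}(\cX)}$ uniformly in $N$, and this truncation is what lets the covering argument proceed even when $\mu(\cX)=\infty$. Next I would prove a weighted weak-type $(p_0,p_0)$ estimate for $\cM_N$: for $g\in L_{p_0,w}(\cX)$ and $\lambda>0$, cover the level set $\{\cM_N g>\lambda\}$ by balls on which the average of $|g|$ exceeds $\lambda$; on each such ball $\cB$, H\"older's inequality and the $A_{p_0}(\cX)$ condition give $\lambda^{p_0}w(\cB)\le[w]_{A_{p_0}}\int_\cB|g|^{p_0}w\,dY$. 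Because the quasi-metric constant $K_1$ equals $1$ and the radii are bounded by $N$, the elementary Vitali-type covering lemma yields a countable pairwise disjoint subfamily whose fivefold concentric dilates cover the level set; applying the doubling of $w$, the pointwise estimate on each ball, and disjointness then gives $w(\{\cM_N g>\lambda\})\le N\lambda^{-p_0}\|g\|_{L_{p_0,w}(\cX)}^{p_0}$ with $N=N(p,K_0,K_2)$.

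Finally, since $\|\cM_N g\|_{L_\infty(\cX)}\le\|g\|_{L_\infty(\cX)}$ and $\cM_N$ is sublinear, I would invoke the Marcinkiewicz interpolation theorem with respect to the measure $w\,dY$, interpolating between this weak-type $(p_0,p_0)$ bound and the trivial $L_\infty$ bound, to obtain $\|\cM_N f\|_{L_{p,w}(\cX)}\le N\|f\|_{L_{p,w}(\cX)}$ with $N$ depending only on $p_0$, $p$, and the weak-type constant, hence only on $p,K_0,K_2$; letting $N\to\infty$ finishes the argument. I expect the main obstacle to be bookkeeping rather than mathematics: making sure the Vitali covering and the doubling are applied with constants depending only on $K_2$ (not on any geometric feature of $\cX$), handling the unbounded-radius issue cleanly via the truncation, and checking that Marcinkiewicz interpolation is valid with respect to the abstract Borel measure $w\,dY$ (it is, since that theorem uses only $\sigma$-finiteness and sublinearity). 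The one place where something nontrivial is really being used --- the self-improving property of $A_p$ --- is precisely Lemma \ref{1016@lem2}, so no new idea beyond the cited material is required.
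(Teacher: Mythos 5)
Your proof is correct, and the constant-tracking is sound, but it is worth noting that the paper does not prove this statement itself: it cites it directly to Aimar--Mac\'{\i}as \cite{MR0740173}. What you have written is essentially a self-contained reconstruction of the argument that underlies that reference. A few remarks on the details. (1) Your extraction of the doubling of $w\,dY$ from the first inequality of Lemma \ref{1015@lem3} is exactly right: taking $E=\cB_r^\cX(X)$ as a subset of $\cB_{5r}^\cX(X)$ gives $w(\cB_{5r}^\cX(X))\le K_0(|\cB_{5r}^\cX(X)|/|\cB_r^\cX(X)|)^p\,w(\cB_r^\cX(X))\le K_0 K_2^{3p}\,w(\cB_r^\cX(X))$, so the doubling constant is $K_0 K_2^{3p}$. (2) From Lemma \ref{1016@lem2}, the estimate $[w]_{A_{p_0}}\le N(p,K_0,K_2)K_0$ follows by inserting \eqref{160614@eq2} into the definition of $[w]_{A_{p_0}}$ and using $[w]_{A_p}\le K_0$, so the weak-type $(p_0,p_0)$ constant is indeed controlled by $p,K_0,K_2$. (3) The truncation $\cM_N$ is the right way to make the elementary $5r$-covering lemma available when $\mu(\cX)=\infty$, and, as you observe, $K_1=1$ throughout the paper, so the dilation factor $5$ works without modification. (4) Marcinkiewicz interpolation between the weak $(p_0,p_0)$ estimate and the trivial $L_\infty$ bound, applied with respect to the measure $w\,dY$, yields the strong $(p,p)$ bound with constant depending only on $p_0$, $p$, and the weak-type constant, hence on $p,K_0,K_2$. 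Nothing is missing; the only caution I would add is to make explicit, when applying the Vitali lemma, that each point of $\{\cM_N g>\lambda\}$ is contained in at least one ball of the covering family with radius at most $N$, and that the disjointness of the subfamily is used in the final summation. This is a legitimate alternative to the paper's approach of simply invoking \cite{MR0740173}; what it buys is self-containedness and explicit control of how the operator norm depends on $[w]_{A_p}$ (through $K_0$) and the doubling constant $K_2$, which is precisely the dependence the paper needs but would otherwise have to trace through the cited reference.
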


\section{Interior and boundary estimates}		\label{sec5}

In this section, we denote
\begin{equation*}
\cL_0\vec u=\sum_{|\alpha|=|\beta|=m}D^\alpha\big({A}^{\alpha\beta}_0D^\beta\vec u\big),
\end{equation*}
where $A_0^{\alpha\beta}=A_0^{\alpha\beta}(t)$ satisfy \eqref{boundedness} and \eqref{LH}.

In the lemma below, we provide $L_\infty$-estimates not only for a weak solution $\vec u$ but also for its derivatives $D^m \vec u$.
In fact, the results in the lemma is proved by the standard iteration argument along with the Sobolev embedding theorem and the known $L_p$-estimates for  systems.
Precisely, since the coefficients $A_0^{\alpha\beta}$ are independent of the spatial variables, we view the operator $\cL_0$ as a non-divergence type operator and use the $L_p$-estimates for non-divergence type systems proved in \cite{MR2771670}.
The proof is mostly standard, so we only describe the major steps.

For a given constant $\lambda \ge 0$ and functions $\vec u$ and $\vec f_\alpha$, $|\alpha| \le m$, we write
\begin{equation}
							\label{eq0223_01}		
U=\big(\lambda^{\frac{1}{2}-\frac{|\alpha|}{2m}}D^\alpha \vec u\big)_{|\alpha| \le m} \quad \text{and}\quad F=\big(\lambda^{\frac{|\alpha|}{2m}-\frac{1}{2}}\vec f_\alpha\big)_{|\alpha| \le m},
\end{equation}
where $f_\alpha \equiv 0$ for $|\alpha|<m$ whenever $\lambda = 0$.

\begin{lemma}		\label{0929.lem1}
Let $\lambda\ge 0$ and $q\in (1,\infty)$.
\begin{enumerate}[$(a)$]
\item
If 
$\vec u\in C^\infty_{\operatorname{loc}}((-\infty,0] \times \bR^d)$
satisfies
\begin{equation*}				
\vec u_t+(-1)^m\cL_0\vec u+\lambda\vec u=0 \quad \text{in }\, Q_2,
\end{equation*}
then we have 
\begin{equation}		\label{1002.eq1}
\|U\|_{L_\infty(Q_1)}\le N\|U\|_{L_q(Q_2)},
\end{equation}
where $N=N(d,m,n,\delta,q)$.
\item
If 
$\vec u\in C^\infty_{\operatorname{loc}}((-\infty,0] \times \overline{\bR^d_+})$ satisfies
\begin{equation*}		
\left\{
\begin{aligned}
\vec u_t+(-1)^m\cL_0\vec u+\lambda \vec u=0 &\quad \text{in }\, Q_2^+,\\
|\vec u| =\cdots=|D^{m-1}_1\vec u|=0 &\quad \text{on }\, Q_2 \cap \{X \in \bR^{d+1}: x_1 = 0\},
\end{aligned}
\right.
\end{equation*}
then we have
$$
\|U\|_{L_\infty(Q_1^+)}\le N\|U\|_{L_q(Q_2^+)},
$$
where $N=N(d,m,n,\delta,q)$.

\end{enumerate}
\end{lemma}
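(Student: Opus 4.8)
The plan is to prove both parts by the classical bootstrap/iteration argument, reducing everything to the non-divergence $L_q$-estimates for the operator $\cL_0$ proved in \cite{MR2771670} together with the parabolic Sobolev embedding. Since the leading coefficients $A_0^{\alpha\beta}$ depend only on $t$, the system $\vec u_t+(-1)^m\cL_0\vec u+\lambda\vec u=0$ can be differentiated in the spatial variables: for any multi-index $\nu$, the function $D^\nu\vec u$ solves the same system (with the same coefficients, since they are $x$-independent) in the relevant cylinder. This is the structural fact that makes the argument work and is the reason one insists on $\cL_0$ rather than the full operator $\cL$.

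For part $(a)$, first I would fix a chain of shrinking cylinders $Q_1 = Q_{r_0} \subset Q_{r_1} \subset \cdots \subset Q_{r_k} = Q_2$ and argue in two stages. Stage one: starting from $\vec u \in C^\infty_{\mathrm{loc}}$ solving the homogeneous system, apply the interior non-divergence $L_q$-estimate of \cite{MR2771670} on a pair of nested cylinders to get
\[
\|U\|_{W^{1,m}_q(Q_{r_1})} \le N\,\|U\|_{L_q(Q_2)},
\]
where the $\lambda$-weighting built into $U$ in \eqref{eq0223_01} is exactly what makes the estimate uniform in $\lambda \ge 0$ (one absorbs the lower-order terms that $\lambda$ produces on the right side by the scaling powers of $\lambda$). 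Stage two: iterate. Because $D^\nu\vec u$ solves the same homogeneous system, I can reapply the $L_q$-estimate to each spatial derivative of $\vec u$ of order up to $m$, gaining one order of regularity (and one time derivative) at each step while shrinking the cylinder; after finitely many steps — the number depending only on $d$ and $m$ — one controls enough derivatives of $U$ in $L_q(Q_{r_j})$ that the parabolic Sobolev embedding $W^{1,m}_q \hookrightarrow$ (suitable higher Sobolev space) $\hookrightarrow C^0$ yields $\|U\|_{L_\infty(Q_1)} \le N\|U\|_{L_q(Q_2)}$. Finally, to remove the dependence on $q$ and allow any $q \in (1,\infty)$, I would first run the argument for one large fixed exponent $q_1 > q$, obtaining the $L_\infty$ bound in terms of $\|U\|_{L_{q_1}}$, and then interpolate/use Hölder together with the self-improving nature of the estimate on nested cylinders (a standard iteration with a summable geometric factor) to downgrade $q_1$ to the given $q$.

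For part $(b)$, the argument is the same with two modifications. Differentiation in the tangential directions $x_2,\dots,x_d$ is harmless — $D^\nu_{x'}\vec u$ still satisfies the homogeneous system in $Q_2^+$ and still vanishes together with its first $m-1$ normal derivatives on the flat boundary $\{x_1=0\}$, so the boundary non-divergence $L_q$-estimate of \cite{MR2771670} applies verbatim. Differentiation in the normal direction $x_1$ is recovered from the equation itself: the system expresses $D_1^{2m}\vec u$ (times the invertible leading coefficient block, using the Legendre–Hadamard condition \eqref{LH} for the relevant ellipticity) in terms of $\vec u_t$, $\lambda\vec u$, and spatial derivatives involving at most $2m-1$ normal derivatives, so normal regularity is bootstrapped algebraically once tangential and lower-order normal regularity is known. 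Combining these, the same finite iteration plus the Sobolev embedding up to the flat boundary gives $\|U\|_{L_\infty(Q_1^+)} \le N\|U\|_{L_q(Q_2^+)}$.

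The main obstacle I anticipate is purely bookkeeping rather than conceptual: making the $\lambda$-uniformity transparent through every step of the iteration. Each application of the $L_q$-estimate produces lower-order terms weighted by powers of $\lambda^{1/2m}$, and one must check that, with the normalization $U=(\lambda^{1/2-|\alpha|/2m}D^\alpha\vec u)_{|\alpha|\le m}$, all these terms reassemble into $\|U\|_{L_q}$ on the larger cylinder with constants independent of $\lambda \ge 0$ (including the degenerate case $\lambda = 0$, where only the top-order part of $U$ survives and $f_\alpha \equiv 0$ for $|\alpha| < m$ by convention). A secondary technical point is the passage from a fixed large $q$ to arbitrary $q \in (1,\infty)$; this is handled by the standard trick of iterating the estimate over a geometrically shrinking family of cylinders and summing, which is routine but must be stated carefully to keep $N$ depending only on $d,m,n,\delta,q$.
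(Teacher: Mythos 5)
Your plan for the $\lambda=0$ case matches the paper in substance: view $\cL_0$ as a non-divergence operator (coefficients independent of $x$), apply the interior and half-space $L_p$-estimates from \cite{MR2771670}, iterate with the parabolic Sobolev embeddings, and use the fact that spatial derivatives of $\vec u$ solve the same system. So far so good.

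The genuine gap is your treatment of general $\lambda>0$. You write that the $\lambda$-uniformity ``is exactly what makes the estimate uniform'' and then classify this step as ``purely bookkeeping.'' It is not. If you substitute the equation $\vec u_t+(-1)^m\cL_0\vec u=-\lambda\vec u$ into the non-divergence $L_p$-estimate on nested cylinders, the right-hand side acquires a factor of $\lambda$ with no compensating negative power, and the localization cut-off introduces commutator terms carrying no $\lambda$-weight at all; naive iteration does not reassemble them into $\|U\|_{L_q}$ with a $\lambda$-free constant. The paper sidesteps this by \emph{Agmon's trick}: introduce an auxiliary variable $\tau$, set $\eta(\tau)=\cos(\lambda^{1/2m}\tau)+\sin(\lambda^{1/2m}\tau)$ (so that $(-1)^m D_\tau^{2m}\eta=\lambda\eta$ and $|D^j_\tau\eta(0)|=\lambda^{j/2m}$), and define $\hat{\vec u}(t,x,\tau)=\vec u(t,x)\eta(\tau)$. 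Then $\hat{\vec u}$ solves the \emph{$\lambda$-free} equation $\hat{\vec u}_t+(-1)^m\cL_0\hat{\vec u}+(-1)^mD_\tau^{2m}\hat{\vec u}=0$ in one extra dimension, the $\lambda=0$ case applies directly, and the components of $U$ are read off from $D^m_{(x,\tau)}\hat{\vec u}$ evaluated near $\tau=0$. This is a real device, not bookkeeping, and without it (or an equivalent substitute you would have to supply) your proposal does not yield constants independent of $\lambda$.

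A secondary point for part $(b)$: with $\lambda=0$ the vector $U$ reduces to $D^m\vec u$ alone, so the chain ``$\|D^m\vec u\|_{L_\infty(Q_1^+)}\le N\|\vec u\|_{W^{1,2m}_{q_1}(Q_1^+)}\le N\|\vec u\|_{L_q(Q_2^+)}$'' obtained from \cite{MR2771670} does not immediately give $\|U\|_{L_\infty}\le N\|U\|_{L_q}$. The paper closes this by a repeated Poincar\'e inequality using the vanishing of $\vec u,\dots,D_1^{m-1}\vec u$ on $\{x_1=0\}$, bounding $\|\vec u\|_{L_q(Q_2^+)}$ by $N\|D^m\vec u\|_{L_q(Q_2^+)}$. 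Your proposal omits this reconstitution step; the algebraic bootstrap of normal derivatives you describe addresses regularity, not the $L_q$-norm comparison needed here.
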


\begin{proof}
We first prove the assertion (a) with $\lambda=0$.
As mentioned above, owing to the coefficients being independent of the spatial variables, $\vec u \in C_{\operatorname{loc}}^\infty((-\infty,0] \times \bR^d)$ satisfies the following non-divergence type system
\begin{equation}
							\label{eq0222_04}
\vec u_t+(-1)^m \sum_{|\alpha|=|\beta|=m} A_0^{\alpha\beta} D^\alpha D^\beta \vec u = 0 \quad \text{in }\, Q_2.
\end{equation}
By the $L_p$-estimate for non-divergence type systems in \cite[Theorem 2]{MR2771670} and the localization argument as in the proof of \cite[Lemma 1]{MR2771670}, we obtain
$$
\|\vec u\|_{W_p^{1,2m}(Q_r)}\le N\|\vec u\|_{L_p(Q_R)}
$$
for any $p \in (1,\infty)$ and $1 \le r < R \le 2$, where $N=N(d,m,n,\delta,p,r,R)$.
From the above inequality, the standard iteration argument, and Sobolev embedding type results (see Lemmas \ref{0922.lem2} and \ref{0922.lem4}), we have 
\begin{equation}				\label{eq0222_03}
\|\vec u\|_{L_\infty(Q_1)} \le N\|\vec u\|_{W^{1,2m}_{q_1}(Q_1)}\le N\|\vec u\|_{L_q(Q_2)},
\end{equation} 
where $q_1 \in [q,\infty)$ is sufficiently large so that $2m > (d+2m)/q_1$ (see Lemma \ref{0922.lem2}). 
Since $D^m \vec u$ also satisfies \eqref{eq0222_04}, we obtain \eqref{eq0222_03} with $D^m \vec u$ in place of $\vec u$, which is  \eqref{1002.eq1} with $\lambda = 0$.

Now we prove the assertion (b) for $\lambda = 0$. We repeat the above argument by using $L_p$-estimates for systems defined on a half space with the Dirichlet boundary condition. 
Precisely, using \cite[Theorem 4]{MR2771670} instead of \cite[Theorem 2]{MR2771670}, we arrive at
\begin{equation}				\label{eq0222_05}
\|D^m \vec u\|_{L_\infty(Q_1^+)} \le N\|\vec u\|_{W^{1,2m}_{q_1}(Q_1^+)}\le N\|\vec u\|_{L_q(Q_2^+)},
\end{equation}
where $q_1 \in [q,\infty)$ and $m > (d+2m)/q_1$ (again, see Lemma \ref{0922.lem2}). 
Then, one can bound the last term in \eqref{eq0222_05} by $N \|D^m\vec u\|_{L_q(Q_2^+)}$ by repeatedly using the Poincar\'e inequality (see, for instance, \cite[Theorem 10.2.5]{MR2435520}), i.e.,
for $u \in W_p^1(B_1^+)$, we have
$$
\int_{B_1^+}|u(x)|^p \, dx=\int_{B_1}|\bar{u}(x)|^p \, dx
=\frac{1}{2|B_1|}\int_{B_1}\int_{B_1 \setminus B_1^+}|\bar{u}(x)-\bar{u}(y)|^p \, dy \, dx
$$
$$
\le \frac{1}{2|B_1|}\int_{B_1}\int_{B_1}|\bar{u}(x)-\bar{u}(y)|^p \, dy \, dx
\le 2^d \int_{B_1}|\nabla \bar{u}(x)|^p \, dx,
$$
where $\bar{u} \in W_p^1(B_1)$ is an extension of $u$ to $B_1$ so that $u \equiv 0$ on $B_1 \setminus B_1^+$.

For a general $\lambda>0$, we only prove (a). The other case is entirely analogous. We use an idea by S. Agmon.
Let $\eta=\eta_\lambda(\tau)$ be a smooth function on $\bR$ defined by 
\[
\eta(\tau)=\cos(\lambda^{\frac{1}{2m}}\tau)+\sin (\lambda^{\frac{1}{2m}}\tau ).
\]
Note that 
\[
(-1)^m D^{2m}_\tau\eta=\lambda \eta \quad \text{and}\quad |D^j_\tau \eta(0)|=\lambda^{\frac{j}{2m}}, \quad \forall j=0,1,\ldots.
\]
By setting 
\[
\hat{\vec u}(t,x,\tau)=\vec u(t,x)\eta(\tau) \quad \text{and}\quad \widehat{Q}_r=(-r^{2m},0)\times \{(x,\tau)\in \bR^{d+1}:|(x,\tau)|<r\},
\]
we see that $\hat{\vec u}$ satisfies 
\[
\hat{\vec u}_t+(-1)^m \cL_0 \hat{\vec u}+(-1)^mD_\tau^{2m}\hat{\vec u}=0 \quad \text{in }\, \widehat{Q}_2.
\]
By applying the  result for $\lambda = 0$ to $\hat{\vec u}$, we obtain 
\begin{equation}		\label{0922.eq1d}
\|D^m_x \vec u\|_{L_\infty(Q_1)}\le\|D^m_{(x,\tau)} \hat{\vec u}\|_{L_\infty(\widehat{Q}_1)}\le N\|D^m_{(x,\tau)}\hat{\vec u}\|_{L_q(\widehat{Q}_2)}.
\end{equation}
We also obtain from \eqref{eq0222_03} that 
\begin{equation}		\label{0922.eq1e}
\|\vec u\|_{L_\infty(Q_1)}\le \|\hat{\vec u}\|_{L_\infty(\widehat{Q}_1)}\le N\|\hat{\vec u}\|_{L_q(\widehat{Q}_2)}\le N\|\vec u\|_{L_q(Q_2)}.
\end{equation}
Notice that $D^m_{(x,\tau)}\hat{\vec u}$ is a linear combination of 
\[
\lambda^{\frac{1}{2}-\frac{k}{2m}}\cos (\lambda^{\frac{1}{2m}}\tau)D^k_x\vec u \quad \text{and}\quad \lambda^{\frac{1}{2}-\frac{k}{2m}}\sin (\lambda^{\frac{1}{2m}}\tau)D^k_x\vec u, \quad k=0,1,\ldots,m.
\]
Therefore, by combining \eqref{0922.eq1d} and \eqref{0922.eq1e}, and then, using the interpolation inequalities, we conclude \eqref{1002.eq1}.
\end{proof}

In the following lemma,  we consider the operator $\cL$ without lower order terms, i.e., 
\[
\cL \vec u=\sum_{|\alpha|=|\beta|=m}D^\alpha(A^{\alpha\beta}D^\beta \vec u).
\]

\begin{lemma}		\label{0929-lem1}
Let $T\in [0,\infty]$,  $\lambda\ge 0$, $q\in (1,\infty)$, $\nu \in (1,\infty)$, $\nu' = \nu/(\nu-1)$, and $\Omega$ be a domain in $\bR^d$.
Assume $\vec u\in C^\infty_0((-\infty,T]\times \Omega)$ satisfies
\begin{equation}		\label{1004.eq1}
\vec u_t+(-1)^m\cL\vec u+\lambda \vec u=\sum_{|\alpha| \le m}D^\alpha \vec f_\alpha \quad \text{in }\ \Omega_T, 
\end{equation}
where $\vec f_\alpha\in L_{q,\operatorname{loc}}((-\infty,T] \times \overline{\Omega})$, $|\alpha| \le m$.
\begin{enumerate}[$(a)$]
\item
Suppose that Assumption \ref{0923.ass1} $(\gamma)$ {\rm (i)} holds at $0\in \Omega$ with $\gamma>0$. 
Then for $R$ such that $0 < R \le \min(R_0,\operatorname{dist} (0,\partial \Omega))$,  $\vec u$ admits a decomposition 
\[
\vec u=\vec v+\vec w \quad \text{in }\, Q_R
\]
satisfying
\begin{align}
\label{0923.eq2}
(|W|^q)^{\frac{1}{q}}_{Q_R}&\le N\gamma^{\frac{1}{q\nu'}}(|U|^{q\nu} )^{\frac{1}{q\nu}}_{Q_R}+N(|F|^q)^{\frac{1}{q}}_{Q_R},\\
\label{0923.eq2a}
\|V\|_{L_\infty(Q_{R/4})}&\le N\gamma^{\frac{1}{q\nu'}}(|U|^{q\nu})^{\frac{1}{q\nu}}_{Q_R}+N(|F|^q)^{\frac{1}{q}}_{Q_R}+N(|U|^q)^{\frac{1}{q}}_{Q_R}.
\end{align}

\item
Suppose that Assumption \ref{0923.ass1} $(\gamma)$ holds at $0\in \partial \Omega$ with $\gamma\in \big(0,\frac{1}{6}\big)$.
Then for $R\in (0,R_0]$,  $\vec u$ admits a decomposition 
\[
\vec u=\vec v+\vec w \quad \text{in }\, \cC_R:=Q_R\cap \Omega_T
\]
satisfying
\begin{align}
\label{0923.eq2b}
(|W|^q)^{\frac{1}{q}}_{\cC_R}&\le N\gamma^{\frac{1}{q\nu'}}(|U|^{q\nu} )^{\frac{1}{q\nu}}_{\cC_R}+N(|F|^q)^{\frac{1}{q}}_{\cC_R},\\
\label{0923.eq2c}
\|V\|_{L_\infty(\cC_{R/6})}&\le N\gamma^{\frac{1}{q\nu'}}(|U|^{q\nu})^{\frac{1}{q\nu}}_{\cC_R}+N(|F|^q)^{\frac{1}{q}}_{\cC_R}+N(U^q)^{\frac{1}{q}}_{\cC_R}.
\end{align}
\end{enumerate}
Here, the constant $N$ depends on $d$, $m$, $n$, $\delta$, $\nu$, and $q$, and
$V$ and $W$ are defined in the same way as $U$ in \eqref{eq0223_01} with $\vec u$ replaced by $\vec v$ and $\vec w$, respectively.
\end{lemma}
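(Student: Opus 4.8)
The plan is a coefficient-freezing (perturbation) argument of Caffarelli--Peral / Byun--Wang type that reduces everything to the constant-in-space operator $\cL_0$ treated in Lemma~\ref{0929.lem1}. For $|\alpha|=|\beta|=m$ I would set $A_0^{\alpha\beta}(t)=\dashint_{B_R}A^{\alpha\beta}(t,y)\,dy$, so that $\cL_0\vec u=\sum_{|\alpha|=|\beta|=m}D^\alpha(A_0^{\alpha\beta}D^\beta\vec u)$ is an operator of the type considered in this section; let $\vec w$ be the solution, vanishing on the parabolic boundary of $Q_R$ (resp. $\cC_R$ in case $(b)$), of
\[
\vec w_t+(-1)^m\cL_0\vec w+\lambda\vec w=\sum_{|\alpha|\le m}D^\alpha\vec f_\alpha-(-1)^m\sum_{|\alpha|=|\beta|=m}D^\alpha\big((A^{\alpha\beta}-A_0^{\alpha\beta})D^\beta\vec u\big),
\]
which exists by the $\mathring{\cH}^m_q$-solvability for operators whose coefficients depend only on $t$ (cf. \cite{MR2771670}), and put $\vec v=\vec u-\vec w$; subtracting \eqref{1004.eq1} one checks that $\vec v$ solves the homogeneous equation $\vec v_t+(-1)^m\cL_0\vec v+\lambda\vec v=0$ in $Q_R$ (resp. $\cC_R$).

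For \eqref{0923.eq2} I would invoke the a priori estimate for $\cL_0$ with zero boundary data (again \cite{MR2771670}; when $\lambda=0$ the convention $\vec f_\alpha\equiv0$, $|\alpha|<m$, reduces this to the pure $2m$-th order Calder\'on--Zygmund estimate) and cancel the common factor $\lambda^{1/2}$ when $\lambda>0$, which gives $\|W\|_{L_q(Q_R)}\le N\|F\|_{L_q(Q_R)}+N\sum_{|\alpha|=|\beta|=m}\|(A^{\alpha\beta}-A_0^{\alpha\beta})D^\beta\vec u\|_{L_q(Q_R)}$. H\"older's inequality with exponents $\nu,\nu'$, the bound $|A^{\alpha\beta}-A_0^{\alpha\beta}|^{q\nu'}\le(2\delta^{-1})^{q\nu'-1}|A^{\alpha\beta}-A_0^{\alpha\beta}|$, and Assumption~\ref{0923.ass1}~$(\gamma)$~(i) at $0$, which yields $\dashint_{Q_R}|A^{\alpha\beta}-A_0^{\alpha\beta}|\le\gamma$, then give $\|(A^{\alpha\beta}-A_0^{\alpha\beta})D^\beta\vec u\|_{L_q(Q_R)}\le N\gamma^{1/(q\nu')}|Q_R|^{1/q}(|D^\beta\vec u|^{q\nu})^{1/(q\nu)}_{Q_R}$; dividing by $|Q_R|^{1/q}$ and using $|D^\beta\vec u|\le|U|$ for $|\beta|=m$ produces \eqref{0923.eq2}. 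For \eqref{0923.eq2a} I would rescale $Q_R$ parabolically to $Q_2$, turning $\vec v$ into a homogeneous solution for an operator of the same type with rescaled $\lambda$, apply Lemma~\ref{0929.lem1}~$(a)$ (which is insensitive to the mild lack of time-regularity of $\vec v$, since its proof only uses $W^{1,2m}_{q}$-bounds), and track the transformation of $U$ to get $\|V\|_{L_\infty(Q_{R/4})}\le N(|V|^q)^{1/q}_{Q_R}$; since $V=U-W$, the triangle inequality $(|V|^q)^{1/q}_{Q_R}\le(|U|^q)^{1/q}_{Q_R}+(|W|^q)^{1/q}_{Q_R}$ and \eqref{0923.eq2} give \eqref{0923.eq2a}.

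In the boundary case the construction and the proof of \eqref{0923.eq2b} are identical, with $B_R$ centered at $0\in\partial\Omega$, $Q_R$ replaced by $\cC_R$, and Assumption~\ref{0923.ass1}~$(\gamma)$~(i) used at the boundary point (note $|\cC_R|\sim|Q_R|$, since $\gamma<1/6$ forces $|\Omega\cap B_R|\gtrsim|B_R|$); moreover $\vec v=\vec u-\vec w$, being in $\mathring{\cH}^m_q$ minus a function solving the homogeneous $\cL_0$-equation, vanishes together with its spatial derivatives of order $\le m-1$ on $\partial\Omega\cap B_R$. The extra work is \eqref{0923.eq2c}: by Assumption~\ref{0923.ass1}~$(\gamma)$~(ii), in the good coordinate system $\Omega_R$ lies between the half-balls $\{x_1>\gamma R\}\cap B_R$ and $\{x_1>-\gamma R\}\cap B_R$, so after flattening I would introduce a second comparison function $\vec h$ solving the homogeneous $\cL_0$-equation on a genuine half-cylinder of size comparable to $R$ with zero Dirichlet data on its flat face; since $\partial\Omega\cap B_R$ is confined to the $\gamma R$-slab, an energy estimate exploiting $\vec v=0$ on $\partial\Omega$ bounds $(|V-H|^q)^{1/q}$ by a small power of $\gamma$ times averages of $|U|$ plus the $F$-term, while Lemma~\ref{0929.lem1}~$(b)$ applied to $\vec h$ (after rescaling its half-cylinder to $Q_2^+$) controls $\|H\|_{L_\infty}$ on a smaller half-cylinder that still contains $\cC_{R/6}$; combining these with \eqref{0923.eq2b} as in the interior case yields \eqref{0923.eq2c}.

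I expect the main obstacle to be the last comparison in case $(b)$: making the estimate of $\vec v-\vec h$ quantitative using only the $\gamma R$-flatness from Assumption~\ref{0923.ass1}~$(\gamma)$~(ii) and the Dirichlet condition $\vec v=0$ on $\partial\Omega\cap B_R$, and choosing the chain of radii so that the flattened half-cylinder remains comparable to $R$ while still containing $\cC_{R/6}$. This is the part carried out as in \cite{MR2835999}; everything else reduces to the a priori estimates and solvability for the frozen operator $\cL_0$ (from \cite{MR2771670}) together with Lemma~\ref{0929.lem1}.
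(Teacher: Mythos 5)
Your proof of part $(a)$ follows a slightly different route from the paper but should be salvageable: the paper does not solve a Dirichlet problem on $Q_R$; instead it multiplies the forcing by a cutoff $\varphi$ supported in $\cB_R$ and solves for $\vec w$ on the \emph{whole} space $(-\infty,0)\times\bR^d$ via \cite[Theorem~1]{MR2771670}, with the consequence that $\vec v=\vec u-\vec w$ satisfies the homogeneous equation only on $Q_{R/2}$ (where $\varphi\equiv1$). This avoids any discussion of Dirichlet solvability on a bounded cylinder for small $\lambda$; your version needs to justify that solvability and the quantitative estimate for $\cL_0$ on $Q_R$ hold uniformly for all $\lambda>0$, which is not immediate from the cited reference.

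In part $(b)$ there is a genuine gap, and in fact a circularity. You propose to solve the Dirichlet problem for the frozen operator $\cL_0$ directly on $\cC_R=Q_R\cap\Omega_T$, whose spatial section $\Omega\cap B_R$ is only Reifenberg flat. No such $\mathring{\cH}^m_q$-solvability on Reifenberg-flat domains is available from \cite{MR2771670} (which covers the whole space, half spaces, and Lipschitz domains); indeed, $L_q$-solvability for constant- or $t$-only-dependent coefficients on Reifenberg domains is precisely a special case of what the present paper is proving. The paper avoids this by a quite different device: it introduces a one-dimensional cutoff $\chi(x_1)$ that vanishes for $x_1\le\gamma R$ and equals $1$ for $x_1\ge 2\gamma R$, so that $\hat{\vec u}=\chi\vec u$ vanishes (with all derivatives) on the thin slab near $\partial\Omega$, and then solves the Dirichlet problem for $\hat{\vec w}$ on the \emph{exact half space} $(-\infty,T)\times\{x_1>\gamma R\}$, where \cite[Theorem~3]{MR2771670} applies directly. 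The final decomposition is $\vec w=\hat{\vec w}+(1-\chi)\vec u$, $\vec v=\vec u-\vec w$, and the ``bad'' near-boundary part $(1-\chi)\vec u$ is controlled using the vanishing of $\vec u$ on $\partial\Omega$ together with Hardy-type inequalities (this is the step the paper says is ``similar to (7.19) in \cite{MR2835999}''). Because $\vec v$ is then, by construction, a solution on a genuine shifted half cylinder with zero Dirichlet data on $\{x_1=\gamma R\}$, Lemma~\ref{0929.lem1}~$(b)$ applies after translation and scaling; no secondary comparison function $\vec h$, no flattening map, and no energy comparison between two different domains are needed. Your reference to \cite{MR2835999} for the flattening-plus-$\vec h$-comparison is therefore off target: that paper does not carry out such a comparison, it uses the $\chi$-cutoff trick. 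Your alternative route (comparing to a half-space solution via an energy estimate) is a legitimate strategy in principle, but it is far from routine for $2m$-th order systems under the Legendre--Hadamard condition, you would still need a noncircular source of $\mathring{\cH}^m_q$-solvability for $\vec w$ on $\cC_R$, and the sketch as written does not indicate how to get the $L_q$ (rather than $L_2$) control of $V-H$ in a form that scales correctly in $\lambda$. The $\chi$-cutoff construction is the missing idea.
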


\begin{proof}
The proof is an adaptation of that of \cite[Lemma 8.3]{MR2835999}.
We may assume that $A^{\alpha\beta}$ and $\vec f_\alpha$ are infinitely differentiable.
If not, we take the standard mollifications and prove the estimates for the mollifications.
Then we can pass to the limit because the constants $N$ in the estimates are independent of the regularity of $A^{\alpha\beta}$ and $\vec f_\alpha$.
We further assume $\lambda>0$.
Otherwise, we add the term $\varepsilon \vec u$, $\varepsilon>0$, to both sides of \eqref{1004.eq1} and obtain the estimates for the modified system.
Then we let $\varepsilon \to 0^+$.

To prove the assertion $(a)$, we define 
\[
\cL_0\vec u=\sum_{|\alpha|=|\beta|=m}D^\alpha(A^{\alpha\beta}_0D^\beta \vec u), 
\]
where 
\[
A^{\alpha\beta}_0(t)=\dashint_{B_R}A^{\alpha\beta}(t,y)\,dy.
\]
Let $\varphi$ be a smooth function on $\bR^{d+1}$ satisfying
$$
0\le \varphi\le 1, \quad \operatorname{supp} \varphi\subset \cB_R, \quad \text{and}\quad \varphi\equiv 1\text{ on } Q_{R/2}.
$$
By \cite[Theorem 1]{MR2771670}, there exists a unique $\vec w\in \cH^m_q(\bR^d_{0})$ satisfying
\begin{multline*}
\vec w_t+(-1)^m\cL_0 \vec w+\lambda \vec w\\
=(-1)^m\sum_{|\alpha|=|\beta|=m}D^\alpha\big(\varphi(A^{\alpha\beta}_0-A^{\alpha\beta})D^\beta \vec u\big)+\sum_{|\alpha| \le m}D^\alpha(\varphi \vec f_\alpha)
\end{multline*}
in $ \bR^d_0$, where as we recall $\bR^d_0 = (-\infty,0) \times \bR^d$, and 
$$		
\|W\|_{L_q(\bR^d_0)}\le N\sum_{|\alpha|=|\beta|=m}\|(A^{\alpha\beta}_0-A^{\alpha\beta})D^\beta\vec u\|_{L_q(Q_R)}
$$
$$
+N\sum_{|\alpha| \le m}\lambda^{\frac{|\alpha|}{2m}-\frac{1}{2}}\|\vec f_\alpha\|_{L_q(Q_R)},
$$
where $N=N(d,m,n,\delta,q)$.
This together with H\"older's inequality gives \eqref{0923.eq2}.
Since all functions and coefficients involved are infinitely differentiable, by the classical parabolic theory, $\vec w$ is infinitely differentiable.
Therefore, the function   $\vec v=\vec u-\vec w$ is also infinitely differentiable, and it satisfies 
\[
\vec v_t+(-1)^m\cL_0 \vec v+\lambda \vec v=0 \quad \text{in }\, Q_{R/2}.
\]
By Lemma \ref{0929.lem1} $(a)$ with scaling,  we obtain  
\begin{equation*}	
\|V\|_{L_\infty(Q_{R/4})}\le N(|V|^q)^{\frac{1}{q}}_{Q_R}\le N(|U|^q)^{\frac{1}{q}}_{Q_R}+N(|W|^q)^{\frac{1}{q}}_{Q_R}.
\end{equation*}
Thus, we obtain \eqref{0923.eq2a} by using the above inequality and \eqref{0923.eq2}.

Next, we prove the assertion $(b)$.
Without loss of generality, we may assume that Assumption \ref{0923.ass1} $(\gamma)$ holds at $0$ in the original $(t,x)$-coordinates.
Define $\cL_0$ and $\varphi$ as above.
Consider a smooth function $\chi=\chi_R$ defined on $\bR$ such that 
$$
\chi(x_1)\equiv 0 \text{ for } x_1\le \gamma R, \quad \chi(x_1)\equiv 1 \text{ for } x_1\ge 2\gamma R,
$$
$$
|D^k\chi| \le N(\gamma R)^{-k} \text{ for } k=1,\ldots, m.
$$
Then, $\hat{\vec u}(X)=\chi(x_1)\vec u(X)$ along with all its derivatives vanishes on $Q_R\cap \{x_1\le \gamma R\}$ and satisfies in $Q_R^{\gamma+}:=Q_R\cap \{ x_1>\gamma R\}$,
$$
\hat{\vec u}_t+(-1)^m\cL_0\hat{\vec u}+\lambda\hat{\vec u}=(-1)^m \sum_{|\alpha|=|\beta|=m}D^\alpha\big(\big(A_0^{\alpha\beta}-A^{\alpha\beta})D^\beta\vec u\big)
$$
$$
+\sum_{|\alpha| \le m}\chi D^\alpha\vec f_\alpha+(-1)^m\vec g+(-1)^m \vec h,
$$
where we set
\[
\vec g=\cL_0((\chi-1)\vec u) \quad \text{and}\quad \vec h=(1-\chi)\cL\vec u.
\]
Let $\hat{\vec w}$ be the unique $\mathring{\cH}^m_q((-\infty,T) \times \{x:x_1>\gamma R\})$  solution of the problem  (see \cite[Theorem 3]{MR2771670}):
\begin{multline*}		
\hat{\vec w}_t+(-1)^m \cL_0\hat{\vec w}+\lambda \hat{\vec w}=(-1)^m\sum_{|\alpha|=|\beta|=m}D^\alpha\big(\varphi(A_0^{\alpha\beta}-A^{\alpha\beta})D^\beta\vec u\big)\\
+\sum_{|\alpha| \le m}\chi D^\alpha(\varphi \vec f_\alpha)+(-1)^m\hat{\vec g}+(-1)^m\hat{\vec h}
\end{multline*}
in $(-\infty,T)\times \{x:x_1>\gamma R\}$, where 
\begin{align*}
&\hat{\vec g}=\sum_{|\alpha|=|\beta|=m}D^\alpha\big(A_0^{\alpha\beta}\varphi D^\beta((\chi-1)\vec u)\big),\\
&\hat{\vec h}=(1-\chi)\sum_{|\alpha|=|\beta|=m}D^\alpha(A^{\alpha\beta}\varphi D^\beta \vec u).
\end{align*}
By using the argument as in \cite[Lemma A.1]{MR3812104}, we obtain 
\begin{equation}		\label{1007@eq1}
\sum_{k=0}^m \lambda^{\frac{1}{2}-\frac{k}{2m}}\big(I_{Q^{\gamma+}_R}|D^k\hat{\vec w}|^q\big)^{\frac{1}{q}}_{\cC_R}\le N\gamma^{\frac{1}{q\nu'}}\left(|U|^{q\nu}\right)^{\frac{1}{q\nu}}_{\cC_R}+N(|F|^q)^{\frac{1}{q}}_{\cC_R}.
\end{equation}
We extend $\hat{\vec w}$ to be zero in $\cC_R \setminus Q^{\gamma+}_R$, so that $\hat{\vec w}\in \cH^m_q(\cC_R)$.
Set
\[
\vec w=\hat{\vec w}+(1-\chi)\vec u \quad \text{and}\quad \vec v=\vec u-\vec w.
\]
Then similar to (7.19) in \cite{MR2835999}, we deduce \eqref{0923.eq2b} from \eqref{1007@eq1}.
Moreover, we find that
$\vec v\equiv 0$ in $\cC_R\setminus Q^{\gamma+}_R$ and $\vec v$ satisfies
$$
\left\{
\begin{aligned}
\vec v_t+(-1)^m\cL_0\vec v+\lambda\vec v=0 &\quad \text{in }\, Q_{R/2}\cap \{x_1>\gamma R\},\\
|\vec v|=\cdots=|D^{m-1}_1\vec v|=0 &\quad \text{on }\, Q_{R/2}\cap \{x_1=\gamma R\}.
\end{aligned}
\right.
$$
We write $X_0=(0,x_0)\in \bR^{d+1}$, where $x_0=(\gamma R, 0,\ldots,0)\in \bR^d$. 
Then we have 
\begin{align*}
(Q_{R/6}\cap \{x_1>\gamma R\}) &\subset (Q_{R/6}(X_0)\cap \{x_1>\gamma R\})\\
&\subset (Q_{R/3}(X_0)\cap \{x_1>\gamma R\})\subset (Q_{R/2}\cap \{x_1>\gamma R\}).
\end{align*}
Therefore, by applying Lemma \ref{0929.lem1} $(b)$ with scaling, we obtain 
\begin{align*}
\|V\|_{L_\infty(\cC_{R/6})}&=\|V\|_{L_\infty(Q_{R/6}\cap \{x_1>\gamma R\})}\\
&\le \|V\|_{L_\infty(Q_{R/6}(X_0)\cap \{x_1>\gamma R\})}\\
&\le N(|V|^q)^{1/q}_{Q_{R/3}(X_0)\cap \{x_1>\gamma R\}}\le N(|V|^q)^{1/q}_{\cC_{R/2}},
\end{align*}
which together with \eqref{0923.eq2b} gives \eqref{0923.eq2c}.
\end{proof}

\section{Level set argument}		\label{sec6}

In this section, we consider the operator $\cL$ without lower order terms, i.e.,
\[
\cL \vec u=\sum_{|\alpha|=|\beta|=m}D^\alpha(A^{\alpha\beta}D^\beta \vec u).
\]
We denote 
\begin{equation}		\label{160617@eq2}
\cC_r(X)=Q_r(X)\cap \Omega_T.
\end{equation}
If $\cX$ is a space of homogeneous type in $\bR^{d+1}$, then since $\cX$ is open in $\bR^{d+1}$ and we use the parabolic distance, we see that
$$
\cB_r^\cX(X)=\cB_r(X)\cap \cX,
$$
where, as we recall, $\cB_r^\cX(X)$ is a ball in $\cX$ defined in \eqref{eq0624_01}.

For a function $f$ on $\cX$, we define its maximal function $\cM f$ by \eqref{160617@eq1}.
We also denote for $s>0$,  $\nu\in (1,\infty)$, $\nu'=\nu/(\nu-1)$, and $q\in (1,\infty)$ that 
\begin{equation}		\label{1008@eq1}
\begin{aligned}
\cE_1(s)&=\{X\in \Omega_T: |U|(X)>s\},\\
\cE_2(s)&=\big\{X\in  \Omega_T:\gamma^{-\frac{1}{q\nu'}}(\cM(I_{\Omega_T}|F|^{q})(X))^{\frac{1}{q}}+(\cM(I_{\Omega_T}|U|^{q\nu})(X))^{\frac{1}{q\nu}}>s\big\},
\end{aligned}
\end{equation}
where $U$ and $F$ are as in \eqref{eq0223_01}.

\begin{lemma}		\label{1015@lem1}
Let $T \in (-\infty,\infty]$, $\nu \in (1,\infty)$, $\nu' = \nu/(\nu-1)$, $\Omega$ be a domain in $\bR^d$,  and $\Omega_T\subseteq \cX$, where $\cX$ is  a space of homogeneous type in $\bR^{d+1}$ with a doubling constant $K_2$.
Let 
$p_0,q\in (1,\infty)$, $\hat{K}_0\ge 1$, $w\in A_{p_0}(\cX)$, and $[w]_{A_{p_0}}\le \hat{K}_0$.
Suppose that  Assumption \ref{0923.ass1} $(\gamma)$ holds with $\gamma\in \big(0,\frac{1}{6}\big)$, and $\vec u\in C^\infty_0((-\infty,T]\times \Omega)$ satisfies
\begin{equation*}		
\vec u_t+(-1)^m\cL\vec u+\lambda\vec u=\sum_{|\alpha| \le m}D^\alpha \vec f_\alpha \quad \text{in }\,  \Omega_T,
\end{equation*}
where $\lambda>0$ and $\vec f_\alpha\in L_{q,\operatorname{loc}}((-\infty,T] \times \overline{\Omega})$, $|\alpha|\le m$.
Then there exists a constant $\kappa=\kappa(d,m,n,\delta,\nu,q)>1$ such that the following holds: for $X\in (-\infty,T]\times\overline{\Omega}$, $R\in (0,R_0]$, and $s>0$,
if
\begin{equation*}		
N_1\gamma^{\frac{\mu_1}{\nu'}}\le\frac{w\big(\cB_{R/64}^\cX(X)\cap \cE_1(\kappa s)\big)}{w\big(\cB_{R/64}^\cX(X)\big)},
\end{equation*}
where $(N_1,\mu_1)=(N_1,\mu_1)(p_0,\hat{K}_0,K_2)$ are from Lemma \ref{1015@lem3}, then we have 
\begin{equation}		\label{1015@eq1a}
\cC_{R/64}(X) \subset \cE_2(s).
\end{equation}
\end{lemma}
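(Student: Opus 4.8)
The plan is to argue by contrapositive: assuming that $\cC_{R/64}(X) \not\subset \cE_2(s)$, we will produce a point where the relevant maximal functions are controlled, apply the decomposition from Lemma \ref{0929-lem1}, and then use the measure-theoretic comparison of Lemma \ref{1015@lem3} to contradict the hypothesis. So fix $X \in (-\infty,T]\times\overline\Omega$, $R\in(0,R_0]$, $s>0$, and suppose there is a point $X_1 \in \cC_{R/64}(X)$ with $X_1\notin\cE_2(s)$, i.e.
\[
\gamma^{-\frac{1}{q\nu'}}\bigl(\cM(I_{\Omega_T}|F|^{q})(X_1)\bigr)^{\frac{1}{q}}+\bigl(\cM(I_{\Omega_T}|U|^{q\nu})(X_1)\bigr)^{\frac{1}{q\nu}}\le s.
\]
Then I would first reduce to the two geometric situations covered by Lemma \ref{0929-lem1}: either $B_{R/8}(x)$ (for a suitable center) lies inside $\Omega$, in which case we use part $(a)$, or there is a boundary point within distance $\sim R$, in which case, after recentering and rescaling, we use part $(b)$ with the flatness Assumption \ref{0923.ass1}$(\gamma)$. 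In both cases Lemma \ref{0929-lem1} gives a decomposition $\vec u = \vec v + \vec w$ on a cylinder $\cC$ comparable to $\cC_{R}(X)$ with
\[
(|W|^q)^{1/q}_{\cC}\le N\gamma^{\frac{1}{q\nu'}}(|U|^{q\nu})^{1/(q\nu)}_{\cC}+N(|F|^q)^{1/q}_{\cC},
\qquad
\|V\|_{L_\infty(\cC')}\le N\gamma^{\frac{1}{q\nu'}}(|U|^{q\nu})^{1/(q\nu)}_{\cC}+N(|F|^q)^{1/q}_{\cC}+N(|U|^q)^{1/q}_{\cC},
\]
where $\cC'$ is the shrunk cylinder. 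Using the definition of the maximal function at $X_1$ — and the fact that $\cC \subset \cB_{cR}(X_1)$ for an absolute constant $c$ — each average on the right-hand side is bounded by $Ns$ (for the $|F|$ terms one absorbs the $\gamma^{1/(q\nu')}$ and $\gamma^{-1/(q\nu')}$ factors against each other). Thus $\|V\|_{L_\infty(\cC')}\le Ns$ and $(|W|^q)^{1/q}_{\cC}\le N\gamma^{1/(q\nu')}s$.

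Next I choose $\kappa = 4N+1$ (or any constant strictly larger than the constant $N$ in the $L_\infty$ bound for $V$, squared up to account for the chain of inclusions), so that on $\cC'$ we have $|U| \le |V| + |W| \le \kappa s/2 + |W|$; hence
\[
\cE_1(\kappa s)\cap \cC' \subset \{X : |W|(X) > \kappa s/2\} \subset \{X : |W|(X) > s\}.
\]
By Chebyshev's inequality applied to the bound on $(|W|^q)^{1/q}_\cC$, the Lebesgue measure of this set satisfies
\[
\frac{|\cE_1(\kappa s)\cap \cC'|}{|\cC'|} \le N \frac{1}{|\cC'|}\int_{\cC}\frac{|W|^q}{s^q} \le N\gamma^{1/\nu'}.
\]
Since $\cB^{\cX}_{R/64}(X) = \cB_{R/64}(X)\cap\cX \subset \cC' \cap \cX$ (after the recentering, $\cC'$ contains the parabolic ball of radius $\sim R/64$ about $X$, and intersecting with $\Omega_T\subset\cX$), and since $|\cB^{\cX}_{R/64}(X)|$ is comparable to $|\cC'|$ up to the doubling constant $K_2$, this gives
\[
\frac{|\cE_1(\kappa s)\cap \cB^{\cX}_{R/64}(X)|}{|\cB^{\cX}_{R/64}(X)|} \le N\gamma^{1/\nu'}.
\]
Finally I invoke the second inequality in Lemma \ref{1015@lem3} with $p = p_0$, $E = \cE_1(\kappa s)$, which converts this Lebesgue-measure smallness into weighted smallness:
\[
\frac{w\bigl(\cE_1(\kappa s)\cap \cB^{\cX}_{R/64}(X)\bigr)}{w\bigl(\cB^{\cX}_{R/64}(X)\bigr)} \le N_1\Bigl(\frac{|\cE_1(\kappa s)\cap \cB^{\cX}_{R/64}(X)|}{|\cB^{\cX}_{R/64}(X)|}\Bigr)^{\mu_1} \le N_1 (N\gamma^{1/\nu'})^{\mu_1} = N N_1 \gamma^{\mu_1/\nu'}.
\]
Adjusting $\kappa$ and tracking the constants so that the right-hand side is $< N_1\gamma^{\mu_1/\nu'}$ — which requires folding the extra factor $N^{\mu_1}$ into the choice of $\kappa$ — contradicts the hypothesis. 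Hence no such $X_1$ exists, proving \eqref{1015@eq1a}.

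\textbf{Main obstacle.} The delicate point is the bookkeeping of geometric constants: making sure that a single cylinder $\cC$ from Lemma \ref{0929-lem1} simultaneously (i) contains the ball $\cB^{\cX}_{R/64}(X)$, (ii) is contained in a parabolic ball centered at the off-center point $X_1$ of controlled radius so that the maximal-function bound at $X_1$ applies, and (iii) has measure comparable to $|\cB^{\cX}_{R/64}(X)|$ via the doubling constant $K_2$ of $\cX$ — all while respecting the case split (interior versus boundary, where in the boundary case $\Omega$ need only be Reifenberg flat, so $X$ may sit near $\partial\Omega$ and the recentering in Lemma \ref{0929-lem1}$(b)$ must be threaded through). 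The constant $\kappa$ must be chosen \emph{after} all these $N$'s are fixed but \emph{before} $\gamma$ is chosen small (which happens later in the proof of the main theorems), so $\kappa$ must depend only on $d,m,n,\delta,\nu,q$ and not on $\gamma$ or the weight — this forces one to absorb all weight-dependence into $N_1,\mu_1$ (already isolated in Lemma \ref{1015@lem3}) and all doubling-dependence into harmless multiplicative constants, which is exactly the care the statement's dependency list demands.
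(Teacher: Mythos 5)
Your proposal follows essentially the same route as the paper: argue by contradiction from a point $Z\in\cC_{R/64}(X)\setminus\cE_2(s)$, split into the interior case $\operatorname{dist}(x,\partial\Omega)\ge R/8$ and the boundary case, apply Lemma \ref{0929-lem1} on a cylinder centered at a suitably shifted point $X^*=(T^*,x)$ or $X_0^*=(T^*,x_0)$, bound the averages by the maximal functions at $Z$, run Chebyshev's inequality, and use Lemma \ref{1015@lem3} to convert Lebesgue-measure smallness into weighted smallness. Whether Lemma \ref{1015@lem3} is invoked first (as in the paper, to reduce the hypothesis to the unweighted inequality \eqref{1015@eq1}) or last (as you do) is immaterial; in either case the requirement on $\kappa$ reduces to making the multiplicative constant in the Lebesgue-measure Chebyshev bound strictly less than one.

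There is, however, one technical flaw that, as written, breaks the dependency $\kappa=\kappa(d,m,n,\delta,\nu,q)$: you propose to compare $|\cC'|$ to $|\cB_{R/64}^\cX(X)|$ ``up to the doubling constant $K_2$ of $\cX$.'' That constant feeds directly into the $N$ appearing in your final bound $\frac{|\cE_1(\kappa s)\cap\cB_{R/64}^\cX(X)|}{|\cB_{R/64}^\cX(X)|}\le N\gamma^{1/\nu'}$, and since your choice of $\kappa$ is precisely what forces $N<1$ (so that $N^{\mu_1}<1$), a $K_2$-dependent $N$ would produce a $K_2$-dependent $\kappa$, which the lemma forbids. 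The paper avoids $K_2$ entirely at this step: in the interior case it uses the trivial containment $Q_{R/64}(X)\subset\cB_{R/64}^\cX(X)$ to get $|Q_{R/8}(X^*)|\le N(d)|\cB_{R/64}^\cX(X)|$, and in the boundary case it uses the Reifenberg-flatness lower bound $|\Omega\cap B_r(y)|\ge N(d)r^d$ (inequality \eqref{160804@eq1}) to get $|\cC_R(X_0^*)|\le N(d)R^{d+2m}\le N(d)|\cB_{R/64}^\cX(X)|$. Your ``Main obstacle'' paragraph correctly states the target dependency for $\kappa$, but the plan (iii) it describes is not the mechanism that achieves it; the comparison must be made via Lebesgue volumes and Reifenberg flatness, with $K_2$ confined entirely to $N_1$ and $\mu_1$.
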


\begin{proof}
Let  $X=(t,x)\in (-\infty,T]\times \overline{\Omega}$, $R\in (0,R_0]$, and $s>0$.
Owing to Lemma \ref{1015@lem3}, we have 
$$
\frac{w\big(\cB_{R/64}^\cX(X)\cap \cE_1(\kappa s)\big)}{w\big(\cB_{R/64}^\cX(X)\big)}\le N_1\left(\frac{|\cB_{R/64}^\cX(X) \cap \cE_1(\kappa s)|}{|\cB_{R/64}^\cX(X)|}\right)^{\mu_1}.
$$
Therefore, it suffices to claim that \eqref{1015@eq1a} holds, provided that 
\begin{equation}		\label{1015@eq1}		
\gamma^{\frac{1}{\nu'}}<\frac{|\cB_{R/64}^\cX(X)\cap \cE_1(\kappa s)|}{|\cB_{R/64}^\cX(X)|}.
\end{equation}
By dividing $\vec u$ and $\vec f_\alpha$ by $s$, we may assume $s=1$.
We prove the claim by contradiction.
Suppose that 
\[
\gamma^{-\frac{1}{q\nu'}}(\cM(I_{\Omega_T}|F|^{q})(Z))^{\frac{1}{q}}+(\cM(I_{\Omega_T}|U|^{q\nu})(Z))^{\frac{1}{q\nu}}\le 1
\]
for some $Z\in \cC_{R/64}(X)$.
Set 
$$
T^*=\min \big(t+(R/64)^{2m},T\big) \quad \text{and}\quad X^*=(T^*,x)\in (-\infty,T]\times \overline{\Omega}.
$$
If $\operatorname{dist} (x,\partial \Omega)\ge R/8$,  we have 
\[
Z\in \cC_{R/64}(X)\subset Q_{R/32}(X^*)\subset Q_{R/8}(X^*)\subset\Omega_T.
\]
By Lemma \ref{0929-lem1} (a), $\vec u$ admits a decomposition $\vec u=\vec v+\vec w$ in $Q_{R/8}(X^*)$ with the estimates
\[
(|W|^{q})_{Q_{R/8}(X^*)}\le N_2\gamma^{\frac{1}{\nu'}} \quad \text{and}\quad \|V\|_{L_\infty(Q_{R/32}(X^*))}\le N_2,
\]
where $N_2=N_2(d,m,n,\delta,\nu,q)$.
From this together with Chebyshev's inequality, it follows that 
\begin{align}
\nonumber
|\cB_{R/64}^{\cX}(X)\cap \cE_1(\kappa)|&\le \{Y\in Q_{R/32}(X^*):|U|(Y)>\kappa\}\\
\nonumber
&\le \{Y\in Q_{R/32}(X^*):|W|(Y)>\kappa-N_2\}\\
\nonumber
&\le \int_{Q_{R/32}(X^*)}\left|\frac{W}{\kappa-N_2}\right|^{q} \ dY\le \frac{N_2\gamma^{\frac{1}{\nu'}}|Q_{R/8}(X^*)|}{|\kappa-N_2|^{q}}.\\
\label{160804@eq3}
&\le \frac{N_2'\gamma^{\frac{1}{\nu'}}|\cB_{R/64}^\cX (X)|}{|\kappa-N_2|^{q}},
\end{align}
where $N_2'=N_2'(d,m,n,\delta,\nu,q)$
and the last inequality is due to 
$$
|Q_{R/8}(X^*)|\le N(d)|Q_{R/64}(X)|\le N(d)|\cB^\cX_{R/64}(X)|.
$$
The estimate \eqref{160804@eq3} contradicts with \eqref{1015@eq1} if we choose a sufficiently large $\kappa$.

On the other hand, if $\operatorname{dist} (x, \partial \Omega)<R/8$,  we take $x_0\in \partial \Omega$ such that $\operatorname{dist} (x,\partial \Omega)=|x-x_0|$.
Note that
\[
Z\in \cC_{R/64}(X)\subset \cC_{R/6}(X_0^*)\subset \cC_R(X_0^*), \quad X_0^*=(T^*,x_0).
\]
By Lemma \ref{0929-lem1} (b), $\vec u$ admits a decomposition $\vec u=\vec v+\vec w$ in $\cC_R(X_0^*)$ with the estimates
\[
(|W|^{q})_{\cC_R(X_0^*)}\le N_3\gamma^{\frac{1}{\nu'}} \quad \text{and}\quad \|V\|_{L_\infty(\cC_{R/6}(X_0^*)}\le N_3,
\]
where $N_3=N_3(d,m,n,\delta,\nu,q)$.
Therefore, we obtain 
\begin{align}
\nonumber
|\cB_{R/64}^\cX(X)\cap \cE_1(\kappa)|&\le\{Y\in \cC_{R/6}(X^*_0):|U|(Y)>\kappa\}\\
\nonumber
&\le \{Y\in \cC_{R/6}(X^*_0):|W|(Y)>\kappa-N_3\}\\
\label{160804@eq6}
&\le \int_{\cC_{R/6}(X_0^*)}\left|\frac{W}{\kappa-N_3}\right|^q\ dY\le \frac{N_3\gamma^{\frac{1}{\nu'}}|\cC_R(X_0^*)|}{|\kappa-N_3|^q}.
\end{align}
Using the fact that 
\begin{equation}		\label{160804@eq1}
|\Omega\cap B_r(y)|\ge N(d)r^d, \quad \forall y\in \overline{\Omega}, \quad \forall r\in (0,R_0],
\end{equation}
we have 
$$
|\cC_R(X^*_0)|\le N(d)R^{d+2m} \le N(d)|\cB_{R/64}^{\cX}(X)|.
$$
Thus, from \eqref{160804@eq6}, we obtain that 
$$
|\cB_{R/64}^\cX(X)\cap \cE_1(\kappa)|\le \frac{N_3'\gamma^{\frac{1}{\nu'}}|\cB_{R/64}^{\cX}(X)|}{|\kappa-N_3|^q},
$$
where $N_3'=N_3'(d,m,n,\delta,\nu,q)$, which contradicts with \eqref{1015@eq1} if we choose a sufficiently large $\kappa$.
Thus, the claim is proved.
\end{proof}

\begin{lemma}		\label{1016@lem5}
Let $T\in (-\infty,\infty]$, $\Omega$  be a domain  in $\bR^d$, and $\Omega_T\subseteq \cX$, where 
$\cX$ is  a space of homogeneous type in $\bR^{d+1}$ with a doubling constant $K_2$.
Let $p\in (1,\infty)$, $K_0\ge 1$, $w\in A_p(\cX)$, and   $[w]_{A_p}\le K_0$.
Then there exists a constant 
\[
\gamma=\gamma(d,m,n,\delta,p, K_0,K_2)\in (0,1/6)
\]
such that, under  Assumption \ref{0923.ass1} $(\gamma)$, the following holds:
if $\vec u\in C^\infty_0((-\infty,T]\times \Omega)$ vanishes outside $Q_{\gamma R_0}(X_0)$, where $X_0\in \bR^{d+1}$, and satisfies 
\[
\vec u+(-1)^m \cL\vec u+\lambda\vec u=\sum_{|\alpha| \le m}D^\alpha \vec f_\alpha \quad \text{in }\, \Omega_T,
\]
where $\lambda>0$ and $\vec f_\alpha\in L_{p,w}( \Omega_T)$, then we have 
\begin{equation}		\label{1013@eq2}
\|U\|_{L_{p,w}( \Omega_T)}\le N\|F\|_{L_{p,w}(\Omega_T)},
\end{equation}
where  $N=N(d,m,n,\delta,p,K_0,K_2)$.
\end{lemma}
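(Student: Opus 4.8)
The plan is to run a weighted version of the Caffarelli--Peral level-set argument: combine the level-set inclusion of Lemma~\ref{1015@lem1} with a ``crawling of ink spots'' lemma adapted to spaces of homogeneous type carrying a weighted measure, integrate the resulting distribution-function estimate, control the maximal functions via Theorem~\ref{1008@thm5}, and absorb a small power of $\gamma$ at the very end. First I would lower the Muckenhoupt exponent: since $w\in A_p(\cX)$ with $[w]_{A_p}\le K_0$, Lemma~\ref{1016@lem2} yields $p_0\in(1,p)$ and $\hat K_0\ge 1$, depending only on $p,K_0,K_2$, with $w\in A_{p_0}(\cX)$ and $[w]_{A_{p_0}}\le\hat K_0$. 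Then I fix $\nu\in(1,p/p_0)$ and set $q=p/(p_0\nu)\in(1,\infty)$, so that $q\nu=p/p_0<p$, $q<p$, $p/(q\nu)=p_0$, and $p/q=\nu p_0\in(p_0,p)$; hence $w\in A_{p/(q\nu)}(\cX)=A_{p_0}(\cX)$ and $w\in A_{p/q}(\cX)$, which is exactly what Theorem~\ref{1008@thm5} requires below, and every constant in the argument depends only on $d,m,n,\delta,p,K_0,K_2$. With $\cE_1(s),\cE_2(s)$ defined by \eqref{1008@eq1} for these $\nu,q$, with $\kappa=\kappa(d,m,n,\delta,\nu,q)$ from Lemma~\ref{1015@lem1} and $(N_1,\mu_1)$ from Lemma~\ref{1015@lem3}, the hypotheses of Lemma~\ref{1015@lem1} are met (in particular $\vec f_\alpha\in L_{q,\mathrm{loc}}((-\infty,T]\times\overline\Omega)$, since $q<p$ and $w\in A_{p/q}(\cX)$ force $|\vec f_\alpha|^q\in L_{p/q,w}\subset L_{1,\mathrm{loc}}$).

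The core of the proof is the following. Fix $s>0$. Because $\vec u$, and hence $U$, vanishes outside $Q_{\gamma R_0}(X_0)\subset\cB_{\gamma R_0}(X_0)$, the set $\cE_1(\kappa s)$ is confined to the fixed parabolic ball $\cB_{\gamma R_0}(X_0)$; moreover $\cE_1(\kappa s)\subset\cE_2(s)$ up to a null set, since $(\cM(I_{\Omega_T}|U|^{q\nu}))^{1/(q\nu)}\ge|U|$ a.e.\ on $\Omega_T$ and $\kappa>1$. By Lemma~\ref{1015@lem3} the measure $w\,d\mu$ is doubling on $\cX$, and Lemma~\ref{1015@lem1} says precisely that whenever the $w$-density of $\cE_1(\kappa s)$ in a ball $\cB^\cX_{R/64}(X)$ with $R\le R_0$ is at least $N_1\gamma^{\mu_1/\nu'}$, then $\cC_{R/64}(X)\subset\cE_2(s)$. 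Requiring $\gamma\le 1/64$, so that $\cE_1(\kappa s)$ lies in a ball of radius $\le R_0/64$, a weighted crawling of ink spots lemma on spaces of homogeneous type then gives
\[
w\big(\cE_1(\kappa s)\big)\le N(K_2)\,N_1\,\gamma^{\mu_1/\nu'}\,w\big(\cE_2(s)\big),\qquad \forall\,s>0 .
\]
It is exactly here that the hypothesis ``$\vec u$ vanishes outside $Q_{\gamma R_0}(X_0)$'' enters, supplying the smallness of $\cE_1(\kappa s)$ at large scales needed to run the crawling lemma.

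Multiplying by $p s^{p-1}$, integrating over $s\in(0,\infty)$, and using the layer-cake formula together with the substitution $s\mapsto\kappa s$,
\[
\|U\|_{L_{p,w}(\Omega_T)}^p\le \kappa^p N(K_2)N_1\gamma^{\mu_1/\nu'}\Big\|\gamma^{-\frac1{q\nu'}}\big(\cM(I_{\Omega_T}|F|^q)\big)^{\frac1q}+\big(\cM(I_{\Omega_T}|U|^{q\nu})\big)^{\frac1{q\nu}}\Big\|_{L_{p,w}(\cX)}^p .
\]
Then $(a+b)^p\le 2^{p-1}(a^p+b^p)$ and Theorem~\ref{1008@thm5} --- applied with exponent $p/q>1$ and $w\in A_{p/q}(\cX)$ to the first maximal function, and with exponent $p/(q\nu)=p_0>1$ and $w\in A_{p_0}(\cX)$ to the second, using $q\cdot(p/q)=p$ and $q\nu\cdot p_0=p$ --- bound the right-hand side by $N_4\gamma^{\mu_1/\nu'}\big(\gamma^{-p/(q\nu')}\|F\|_{L_{p,w}(\Omega_T)}^p+\|U\|_{L_{p,w}(\Omega_T)}^p\big)$ with $N_4=N_4(d,m,n,\delta,p,K_0,K_2)$. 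Since $\vec u\in C_0^\infty$ has bounded support and $w$ is locally integrable, $\|U\|_{L_{p,w}(\Omega_T)}<\infty$; choosing $\gamma\in(0,1/6)$ small enough --- in particular $\gamma\le 1/64$, small enough for Lemma~\ref{1015@lem1}, and so that $N_4\gamma^{\mu_1/\nu'}\le\tfrac12$, all of which can be arranged in terms of $d,m,n,\delta,p,K_0,K_2$ only --- lets the $\|U\|$-term on the right be absorbed into the left-hand side, leaving \eqref{1013@eq2} with the now-fixed power $\gamma^{\mu_1/\nu'-p/(q\nu')}$ incorporated into $N$.

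The step I expect to be the main obstacle is the weighted crawling of ink spots lemma on a space of homogeneous type, together with the verification of its hypotheses: turning the pointwise implication of Lemma~\ref{1015@lem1} (``high $w$-density of $\cE_1(\kappa s)$ in $\cB^\cX_{R/64}(X)$'' $\Rightarrow$ ``$\cC_{R/64}(X)\subset\cE_2(s)$'') into a Vitali-type covering bound, coping with the discrepancy between the parabolic balls $\cB^\cX_r$ and the backward cylinders $\cC_r$, and exploiting the compact support of $\vec u$ to secure the confinement condition. The remaining ingredients --- lowering the $A_p$ exponent, the exponent matching $p_0=p/(q\nu)\le p/q$ needed for Theorem~\ref{1008@thm5}, and the final absorption --- are routine.
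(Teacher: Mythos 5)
Your proposal is correct in its broad architecture but follows a genuinely different route from the paper at the one point you yourself flag as the main obstacle. The paper does \emph{not} verify the large-scale density hypothesis of the crawling of ink spots lemma for all $s>0$. Instead it verifies it only for $s>s_0$, where $s_0$ is a threshold built from $\|U\|_{L_{p_0,w}(\Omega_T)}$: for $R\ge R_0/64$ it uses the compact support to get the geometric inclusion $\cB_{R_0/3}^\cX(X_0)\subset\cB_{45R}^\cX(X)$ and then H\"older with exponent $p_0$ to bound $w\bigl(\cE_1(\kappa s)\cap\cB_R^\cX(X)\bigr)$ in terms of $\|U\|_{L_{p_0,w}}/s$, which is small once $s>s_0$. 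The complementary range $s\in(0,s_0]$ then produces an extra term
$I_1=N\int_0^{s_0}w(\cE_1(\kappa s))s^{p-1}\,ds$,
which is bounded via Chebyshev at exponent $p_0$ together with Lemma \ref{1015@lem3}, giving the positive power $\gamma^{\mu_1(p-p_0)(\frac{d+2m}{p_0}-\frac1{\nu'})}$ that is absorbed at the end along with $I_2$. You instead claim the crawling lemma applies unconditionally for all $s>0$ because $\cE_1(\kappa s)\subset\cB_{\gamma R_0}(X_0)$. That claim is true but is not a consequence of mere confinement to a small ball: one has to combine $|\cE_1(\kappa s)\cap\cB_R^\cX(X)|\le|\cB_{\gamma R_0}(X_0)|\lesssim(\gamma R_0)^{d+2m}$ with the lower bound $|\cB_R^\cX(X)|\ge|\cC_{\min(R,R_0)}(X)|\gtrsim R_0^{d+2m}$ (which requires the Reifenberg thickness bound \eqref{160804@eq1}), and then pass this Lebesgue-density bound $\lesssim\gamma^{d+2m}$ through Lemma \ref{1015@lem3} to obtain the weighted-density bound $\lesssim N_1\gamma^{\mu_1(d+2m)}$, which is $\le N_1\gamma^{\mu_1/\nu'}$ for small $\gamma$ since $d+2m>1/\nu'$. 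You gesture at this but do not carry it out; that is the single genuine gap. If filled, your route dispenses with the paper's $I_1$ term and is a bit cleaner. The remaining ingredients --- lowering to $A_{p_0}$ via Lemma \ref{1016@lem2}, the exponent bookkeeping with $q\nu=p/p_0$ and $p/q=\nu p_0>1$ so that Theorem \ref{1008@thm5} applies to both maximal functions, the layer-cake integration, and the final absorption by choosing $\gamma$ small --- all match the paper and are correct.
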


\begin{proof}
By Lemma \ref{1016@lem2}, we see that 
$$
w\in A_{p_0}(\cX) \quad \text{and}\quad [w]_{A_{p_0}}\le \hat{K}_0
$$
for some constants $p_0\in (1,p)$ and $\hat{K}_0\ge1$, depending only on $p$, $K_0$, and $K_2$.
We denote
$$
q=\frac{1}{2}\bigg(1+\frac{p}{p_0}\bigg), \quad \nu=\frac{p}{p_0q}, \quad \nu'=\frac{\nu}{\nu-1},
$$
and  let $(N_1,\mu_1)=(N_1,\mu_1)(p_0,\hat{K}_0,K_2)$ and  $\kappa=\kappa(d,m,n,\delta,\nu,q)$ be constants  in Lemma \ref{1015@lem3} and Lemma \ref{1015@lem1}, respectively.
We recall the notation \eqref{160617@eq2} and \eqref{1008@eq1}, and we remark that $\vec f_\alpha\in L_{q,\operatorname{loc}}((-\infty,T]\times \overline{\Omega})$.
Indeed, by using H\"older's inequality and $p_0q<p$, for any $X\in \cX$ and $R>0$ we have 
$$
\begin{aligned}
&\int_{\cB_R^{\cX}(X)} |\vec f_\alpha|^qI_{\Omega_T}\,dX\\
&\le \bigg(\int_{\cB_R^{\cX}(X)}|\vec f_\alpha|^{p_0q}I_{\Omega_T}w\,dX\bigg)^{\frac{1}{p_0}}\bigg(\int_{\cB_R^{\cX}(X)}w^{-\frac{1}{p_0-1}}\,dX\bigg)^{\frac{p_0-1}{p_0}}\\
&\le N\bigg(\int_{\cB_R^{\cX}(X)}|\vec f_\alpha|^{p}I_{\Omega_T}w\,dX\bigg)^{\frac{q}{p}}<\infty.
\end{aligned}
$$
Let  $\gamma\in (0,1/6)$ be a constant satisfying 
$$
N_1\gamma^{\frac{\mu_1}{\nu'}}<1.
$$
Since $\operatorname{supp}\vec u\subset Q_{\gamma R_0}(X_0)$, 
it suffices to prove the lemma when 
$$
\operatorname{supp}\vec u\subset \cB_{2\gamma R_0}(X_0), \quad  X_0\in (-\infty,T]\times \overline{\Omega}.
$$
We first claim that for any $X\in \Omega_T$ and $R\ge R_0/64$, we have 
\begin{equation}		\label{1012@e2}
w(\cE_1(\kappa s)\cap \cB_{R}^{\cX}(X))<N_1 \gamma^{\frac{\mu_1}{\nu'}}w(\cB^{\cX}_{R}(X)),
\end{equation}
provided that 
$$
s>s_0:= \frac{N_2}{N_1\gamma^{\mu_1/\nu'}\kappa w(\cB_{R_0/3}^\cX(X_0))^{1/p_0}}\|U\|_{L_{p_0,w}(\Omega_T)}, \quad N_2=N_2(K_2).
$$
Since $\operatorname{supp} \vec u\subset \cB_{2\gamma R_0}(X_0)$, we only need to consider the case when $\cB_{R}^\cX(X)\cap \cB_{2\gamma R_0}(X_0)\neq \emptyset$.
In this case, we have
$$
\cB_{R_0/3}^{\cX}(X_0)\subset \cB_{45R}^{\cX}(X), 
$$
and thus, by H\"older's inequality and the doubling property of $\cX$, we obtain
\begin{align*}
w(\cE_1(\kappa s)\cap \cB_{R}^{\cX}(X))&\le \frac{1}{\kappa s}\int_{\cB_{45R}^{\cX}(X)}I_{\Omega_T}|U|w\,dY\\
&\le \frac{1}{\kappa s}w(\cB_{45R}^{\cX}(X))^{1-\frac{1}{p_0}}\|U\|_{L_{p_0,w}(\Omega_T)}\\
&\le \frac{N_2}{\kappa s}\frac{w(\cB_{R}^{\cX}(X))}{w(\cB^{\cX}_{R_0/3}(X_0))^{1/p_0}}\|U\|_{L_{p_0,w}(\Omega_T)},
\end{align*}
where $N_2=N_2(K_2)$, which implies \eqref{1012@e2}.
Therefore, by using \eqref{1012@e2}, Lemma \ref{1015@lem1}, and ``the crawling of ink spots" lemma due to Safonov-Krylov \cite{MR0563790},
we have the following inequality;
\begin{equation}		\label{160804@eq9}
w(\cE_1(\kappa s))\le N\gamma^{\frac{\mu_1}{\nu'}}w(\cE_2(s)), \quad \forall s>s_0,
\end{equation}
where $N=N(d,m,p,K_0,K_2)$.
We provide a detailed proof of \eqref{160804@eq9} in Lemma \ref{lem0221_1} in Appendix  for the reader's convenience (also see \cite{MR3467697}).

By \eqref{160804@eq9}, we obtain
\begin{align}
\nonumber
\|U\|_{L_{p,w}(\Omega_T)}^p&=p\int_0^\infty w(\cE_1(s))s^{p-1}\,ds =p\kappa^p\int_0^\infty w(\cE_1(\kappa s))s^{p-1}\,ds\\
\nonumber
&\le N\int_0^{s_0} w(\cE_1(\kappa s))s^{p-1}\,ds+N\gamma^{\frac{\mu_1}{\nu'}}\int_{0}^\infty w(\cE_2( s))s^{p-1}\,ds\\
\label{1013@eq1b}
&:=I_1+I_2,
\end{align}
where $N=N(d,m,n,\delta,\nu,p,K_0, K_2)$.
Notice from Chebyshev's inequality that
$$
w(\cE_1(\kappa s))\le (\kappa s)^{-p_0}\|U\|_{L_{p_0,w}(\Omega_T)}^{p_0}, \quad \forall s>0.
$$
Using this together with H\"older's inequality and Lemma \ref{1015@lem3},  we have 
\begin{align*}
I_1&\le N \left(\int_0^{s_0}s^{p-p_0-1}\,ds\right)\|U\|^{p_0}_{L_{p_0,w}(\Omega_T)}\\
&\le N\gamma^{\frac{\mu_1}{\nu'}(p_0-p)}\left(\frac{w(\cB^\cX_{2\gamma R_0}(X_0))}{w(\cB^{\cX}_{R_0/3}(X_0))}\right)^{\frac{p-p_0}{p_0}}\|U\|_{L_{p,w}(\Omega_T)}^p\\
&\le N\gamma^{\frac{\mu_1}{\nu'}(p_0-p)}\left(\frac{|\cB^\cX_{2\gamma R_0}(X_0)|}{|\cB^{\cX}_{R_0/3}(X_0)|}\right)^{\mu_1\frac{p-p_0}{p_0}}\|U\|_{L_{p,w}(\Omega_T)}^p.
\end{align*}
Therefore, since (use \eqref{160804@eq1})
$$
\frac{|\cB^\cX_{2\gamma R_0}(X_0)|}{|\cB^{\cX}_{R_0/3}(X_0)|}\le \frac{|\cB_{2\gamma R_0}(X_0)|}{|\cC_{R_0/3}(X_0)|}\le N\gamma^{d+2m},
$$
we have
$$
I_1\le N\gamma^{\mu_1(p-p_0)\left(\frac{d+2m}{p_0}-\frac{1}{\nu'}\right)}\|U\|^p_{L_{p,w}(\Omega_T)}.
$$
To estimate $I_2$, we note that  $p/q>p/(q\nu)=p_0$ and  
$$
[w]_{A_{p/q}}\le [w]_{A_{p/(q\nu)}} \le \hat{K}_0.
$$
Then by the definition of $\cE_2$ and Theorem \ref{1008@thm5} with $p/q$ in place of $p$, we obtain   
\begin{align}		
I_2
\nonumber
&\le N\gamma^{\frac{\mu_1}{\nu'}}\left(\gamma^{-\frac{p}{q\nu'}}\big\|(\cM(I_{\Omega_T}|F|^q))^{\frac{1}{q}}\big\|^p_{L_{p,w}(\cX)}+\big\|(\cM(I_{\Omega_T}|U|^{q\nu})^{\frac{1}{q\nu}}\big\|^p_{L_{p,w}(\cX)}\right)\\
\label{1016@e1a}
&\le N\gamma^{\frac{1}{\nu'}\left(\mu_1-\frac{p}{q}\right)}\|F\|_{L_{p,w}(\Omega_T)}^p+N\gamma^{\frac{\mu_1}{\nu'}}\|U\|_{L_{p,w}(\Omega_T)}^p.
\end{align}
Finally, by combining \eqref{1013@eq1b}--\eqref{1016@e1a},  and then, choosing a sufficiently small $\gamma$, we conclude \eqref{1013@eq2}.
\end{proof}

\section{Proofs of main theorems}		\label{sec7}


We begin with the proof of Theorem \ref{1008@thm1}.\\

\noindent
\emph{Proof of Theorem \ref{1008@thm1}}
For the a priori estimate, by moving all the lower-order terms to the right-hand side of the system, we may assume that  all the lower order coefficients $A^{\alpha\beta}$, $|\alpha|+|\beta|<2m$, are zero.
Then we prove the estimate \eqref{1017@e3a}  using Lemma \ref{1016@lem5} and the standard partition of unity argument.
The  details are omitted.

For the solvability in weighted Sobolev spaces $\mathring{\cH}^m_{p,w}(\Omega_T)$,  we use the idea in \cite[Section 8]{MR3812104} together with Lemma \ref{160612@lem1} below, where the solvability of the system in unweighted Sobolev spaces is proved.
Because the proof is the same as that of Theorem \ref{1016@thm1}, 
we omit the details here.
\qed

\begin{lemma}		\label{160612@lem1}
Let $\Omega$ be a domain in $\bR^d$, $T\in (-\infty,\infty]$, $p_1\in (1,\infty)$, and $\vec f_\alpha\in L_{p_1}( \Omega_T)$, $|\alpha|\le m$.
Then there exist constants 
\begin{align*}
&\gamma_1=\gamma_1(d,m,n,\delta,p_1)\in (0,1/4),\\
&\lambda_1=\lambda_1(d,m,n,\delta,p_1, R_0,K)>0
\end{align*}
such that, under Assumption \ref{0923.ass1} $(\gamma_1)$,
for any $\lambda\ge \lambda_1$, there exists a unique  $\vec u\in \mathring{\cH}_{p_1}^m(\Omega_T)$ satisfying 
$$
\vec u_t+(-1)^m\cL\vec u+\lambda\vec u=\sum_{|\alpha| \le m}D^\alpha \vec f_\alpha \quad \text{in }\, \Omega_T.
$$
Moreover, $\vec u$ satisfies 
$$
\sum_{|\alpha| \le m}\lambda^{1-\frac{|\alpha|}{2m}}\|D^\alpha \vec u\|_{L_{p_1}(\Omega_T)}\le N\sum_{|\alpha| \le m}\lambda^{\frac{|\alpha|}{2m}}\|\vec f_\alpha\|_{L_{p_1}(\Omega_T)},
$$
where $N=N(d,m,n,\delta,p_1)$.
\end{lemma}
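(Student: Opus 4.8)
The plan is to obtain the a priori estimate directly from the level-set machinery of Sections~5--6 and then to deduce existence by the method of continuity, with the $L_2$-theory serving as the base case. For the a priori estimate, move the lower-order part of $\cL$ (the terms with $|\alpha|+|\beta|<2m$) to the right-hand side, so that the system becomes $\vec u_t+(-1)^m\cL'\vec u+\lambda\vec u=\sum_{|\alpha|\le m}D^\alpha\tilde{\vec f}_\alpha$, where $\cL'$ carries only the leading coefficients and $\tilde{\vec f}_\alpha$ differs from $\vec f_\alpha$ by terms $A^{\alpha\beta}D^\beta\vec u$ with $|\alpha|+|\beta|<2m$. Applying Lemma~\ref{1016@lem5} with $\cX=\bR^{d+1}$ and $w\equiv 1$ (so $K_0=1$ and $K_2=2^{d+2m}$), together with the partition-of-unity argument used in the proof of Theorem~\ref{1008@thm1}, gives the claimed inequality with $\tilde{\vec f}_\alpha$ in place of $\vec f_\alpha$, with $\gamma_1=\gamma_1(d,m,n,\delta,p_1)$ the constant furnished by Lemma~\ref{1016@lem5}. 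Since every term hidden in $\tilde{\vec f}_\alpha-\vec f_\alpha$ has $|\alpha|+|\beta|<2m$, it carries a strictly positive power of $\lambda$ relative to the matching term on the left, so for $\lambda\ge\lambda_1=\lambda_1(d,m,n,\delta,p_1,R_0,K)$ large enough these contributions, as well as the commutators with the cut-off functions, are absorbed into the left-hand side; uniqueness for $\lambda\ge\lambda_1$ is then immediate, since the difference of two solutions solves the homogeneous system and the estimate forces it to vanish.

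For existence I would interpolate between $\cL$ and a fixed constant-coefficient operator $\cL_\ast$ of divergence type with no lower-order terms satisfying \eqref{boundedness} and \eqref{LH} with the same $\delta$ (for instance an appropriate multiple of $\Delta^m$ written in divergence form, after harmlessly shrinking $\delta$), via the family $\cL_\theta=\theta\cL+(1-\theta)\cL_\ast$, $\theta\in[0,1]$. For every $\theta$ the leading coefficients of $\cL_\theta$ are convex combinations of $A^{\alpha\beta}$ and constant matrices, hence uniformly bounded, Legendre-Hadamard with constant $\delta$, and of spatial mean oscillation $\le\theta\gamma_1\le\gamma_1$; the lower-order coefficients stay bounded by $K$; and Assumption~\ref{0923.ass1}~$(\gamma_1)$ (ii) involves only $\Omega$. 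Hence the a priori estimate of the first paragraph applies to every $\cL_\theta$ with constants independent of $\theta$, so $\vec u\mapsto\vec u_t+(-1)^m\cL_\theta\vec u+\lambda\vec u$ is a bounded, injective map $\mathring\cH^m_{p_1}(\Omega_T)\to\bH^{-m}_{p_1}(\Omega_T)$ with a $\theta$-uniform lower bound on its norm, and by the method of continuity it suffices to solve the problem for $\cL_\ast$.

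For $p_1=2$ the problem for $\cL_\ast$ on the arbitrary open set $\Omega_T$ is solvable by the classical $L_2$ energy method, since for constant coefficients the associated bilinear form is coercive on $\mathring H^m(\Omega)$ whenever $\lambda\ge\lambda_1$; combined with the $\theta$-uniform $L_2$ estimate, the method of continuity then gives $L_2$-solvability for $\cL$ itself. To pass to a general $p_1$, I would approximate $\vec f_\alpha\in L_{p_1}(\Omega_T)$ in $L_{p_1}$ by $\vec f_\alpha^{(k)}\in C_0^\infty(\Omega_T)$, solve each in $\mathring\cH^m_2(\Omega_T)$, and then show $\vec u^{(k)}\in\mathring\cH^m_{p_1}(\Omega_T)$: local higher integrability follows by iterating the interior and boundary estimates behind Lemma~\ref{0929-lem1} together with the parabolic Sobolev embeddings (Lemmas~\ref{0922.lem2} and~\ref{0922.lem4}), which raises the integrability exponent by a fixed factor at each of finitely many steps, while global $L_{p_1}$-integrability of $\vec u^{(k)}$ and its derivatives comes from the decay of solutions of the resolvent problem with large $\lambda$ and compactly supported data. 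The a priori estimate applied to $\vec u^{(k)}-\vec u^{(j)}$ then makes $\{\vec u^{(k)}\}$ Cauchy in $\mathring\cH^m_{p_1}(\Omega_T)$, and its limit is the desired solution. (Alternatively one can replace this bootstrap by solving model problems on the whole space and on a half space via \cite{MR2771670} on a partition of unity subordinate to small parabolic cylinders, patching the pieces and absorbing the resulting commutator and domain-mismatch errors by a Neumann series once $\gamma_1$ is small and $\lambda$ large.)

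I expect the delicate step to be exactly this passage from $\mathring\cH^m_2(\Omega_T)$ to $\mathring\cH^m_{p_1}(\Omega_T)$ --- combining the local $L_p$-regularity of Section~5 with enough control at infinity to land in the closure of $C_0^\infty$ on a possibly unbounded Reifenberg flat domain; this is the mechanism taken from \cite[Section~8]{MR3812104}. The remaining ingredients are either a verbatim repetition of the proof of Theorem~\ref{1008@thm1} or soft functional-analytic arguments.
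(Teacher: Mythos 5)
Your a priori estimate and method-of-continuity setup coincide with the paper's: you move the lower-order terms to the right, invoke Lemma~\ref{1016@lem5} with $\cX=\bR^{d+1}$, $w\equiv1$, run the partition-of-unity argument, and absorb lower-order contributions for large $\lambda$; this is exactly the a priori estimate of Theorem~\ref{1008@thm1} at $w\equiv1$, which the paper applies to the same end. The continuity path from $\cL$ to a constant-coefficient $\cL_\ast$ with $\theta$-uniform estimates is also correct, and your observation that the spatial mean oscillation of the interpolated leading coefficients is $\le\theta\gamma_1$ is the right thing to check.

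Where your proposal has a genuine gap is the passage from $L_2$- to $L_{p_1}$-solvability. You claim that local higher integrability of $\vec u^{(k)}$ ``follows by iterating the interior and boundary estimates behind Lemma~\ref{0929-lem1} together with the parabolic Sobolev embeddings (Lemmas~\ref{0922.lem2} and~\ref{0922.lem4})''. This mechanism does not work. Lemmas~\ref{0922.lem2} and~\ref{0922.lem4} are $W^{1,2m}_q$-to-$L_p$/$C^\mu$ embeddings; they are used inside the proof of Lemma~\ref{0929.lem1}, where the operator has coefficients depending only on $t$, so that the solution enjoys full $W^{1,2m}_q$ regularity. A divergence-form solution of the actual system with $\BMO_x$ coefficients does not lie in $W^{1,2m}_q$ and one cannot raise its integrability by iterating these embeddings. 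Lemma~\ref{0929-lem1} gives only the decomposition $\vec u=\vec v+\vec w$ with $\vec v$ bounded and $\vec w$ small in $L_q$, which is level-set input, not Sobolev input. The mechanism that actually closes this step — and the one the paper itself indicates in the remark following its proof — is a reverse H\"older inequality for the $\mathring\cH^m_2$ solution (which furnishes a small initial integrability gain) combined with the unweighted version of the level-set argument of Section~\ref{sec6}; this yields $\vec u\in\mathring\cH^m_{p_1}(\Omega_T)$ for $p_1>2$, and $p_1\in(1,2)$ is handled by duality, a case your proposal does not address at all. Your parenthetical Neumann-series alternative, patching whole-space and half-space model problems from \cite{MR2771670}, also runs into trouble on a Reifenberg flat domain: the ``domain-mismatch errors'' between the true boundary and a flat reference boundary are not small in a norm that a Neumann series can absorb, which is exactly why the level-set machinery exists. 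The paper sidesteps all of this simply by citing the constant-coefficient solvability result \cite[Theorem~8.2]{MR2835999} for divergence-type higher-order parabolic systems on Reifenberg flat domains, after which the method of continuity finishes the argument in one step; the $L_2$-to-$L_{p_1}$ bootstrap is offered only as an alternative remark, with the reverse-H\"older-plus-level-set (not Sobolev-embedding) mechanism and with the $p_1<2$ duality case spelled out.
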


\begin{proof}
Thanks to the a priori estimate in Theorem \ref{1008@thm1} with $w\equiv 1$ and the method of continuity, we only need to consider the solvability of the system with simple coefficients, for instance,
$$
\cL \vec u=\sum_{|\alpha|=|\beta|=m}D^\alpha(A^{\alpha\beta}D^\beta \vec u),
$$
where $A^{\alpha\beta}$ are constant.
For this result, we refer to \cite[Theorem 8.2]{MR2835999}, where the authors proved the solvability of the system with more general coefficients (partially BMO coefficients).
Thus the lemma is proved.

It is  worth mentioning that one may prove the solvability of the system by showing that a unique solution in $\mathring{\cH}^m_{2}(\Omega_T)$ is indeed in $\mathring{\cH}^m_{p_1}(\Omega_T)$ with $p_1>2$ and $\vec f_\alpha\in L_2(\Omega_T)\cap L_{p_1}(\Omega_T)$.
An ingredient of this reasoning is to use a reverse H\"older's inequality together with estimates as in Lemma \ref{0929-lem1} with $q=2$ for the solution in $\mathring{\cH}^m_2(\Omega_T)$.
Then by adapting the level set argument (without weights) used in Section \ref{sec6}, one can show that the solution in $\mathring{\cH}^m_2(\Omega_T)$ is in $\mathring{\cH}^m_{p_1}(\Omega_T)$.
For $p_1 \in (1,2)$, we use the usual duality argument.
\end{proof}

We now turn to the proof of Theorem \ref{1016@thm1}.
The proof is based on the weighted $L_p$-estimates obtained in Theorem \ref{1008@thm1} and the following theorem, which is  a refined version of the extrapolation theorem. 
The well-known version of the theorem (see, for instance, \cite{MR2797562}) requires the inequality \eqref{eq0222_01} to hold for all $w \in A_p$.
However, the theorem below allows us to obtain \eqref{eq0222_02} for a given $w \in A_p$ by only checking the inequality \eqref{eq0222_01} for a subset (determined by $p,q,K_0, K_2$) of $A_p$.
This refinement is needed because the weighted $L_p$-estimate \eqref{1017@e3a} holds only for $w$ satisfying $[w]_{A_p}\le K_0$.
See the proof of Theorem \ref{1016@thm1} below.

\begin{theorem}[Extrapolation theorem]		\label{1017@@thm1}
Let  $\cX$ be a space of homogeneous type in $\bR^{d+1}$ or $\bR^d$ with a doubling
constant $K_2$.
Let $p,\, q\in (1,\infty)$, $K_0 \ge 1$, $w\in A_q(\cX)$, and $[w]_{A_q}\le K_0$.
Then there exists a constant $\cK_0=\cK_0(p,q,K_0,K_2)\ge 1$ such that if
\begin{equation}
							\label{eq0222_01}
\|f\|_{L_{p,\tilde{w}}(\cX)}\le N_0\|g\|_{L_{p,\tilde{w}}(\cX)}
\end{equation}
for every $\tilde{w}\in A_p(\cX)$ satisfying $[\tilde{w}]_{A_p}\le \cK_0$, then we have 
\begin{equation}
							\label{eq0222_02}
\|f\|_{L_{q,w}(\cX)}\le 4N_0\|g\|_{L_{q,w}(\cX)}.
\end{equation}
\end{theorem}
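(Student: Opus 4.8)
The plan is to prove the extrapolation theorem by the classical Rubio de Francia iteration argument, but carefully tracking the $A_p$ characteristic constants so that only weights with controlled characteristic are ever used. The essential point is that the Rubio de Francia algorithm, applied to a weight $w \in A_q$ with $[w]_{A_q} \le K_0$, produces a majorant $\mathcal{R}h \ge h$ which lies in the Muckenhoupt class $A_1$ with a characteristic bounded purely in terms of the operator norm of the Hardy--Littlewood maximal function on $L_{q',w^{1-q'}}$ (or on $L_{q,w}$, depending on which side one runs the algorithm), and that operator norm is controlled by $q$, $K_0$, and $K_2$ via Theorem \ref{1008@thm5}. Combining an $A_1$ weight of controlled characteristic with a suitable power of $w$ then yields an $A_p$ weight whose characteristic is bounded by a constant $\mathcal{K}_0 = \mathcal{K}_0(p,q,K_0,K_2)$, and for that weight the hypothesis \eqref{eq0222_01} applies.

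\textbf{Step 1: the case $q < p$.} Fix $f, g$ as in the statement. Write $h$ for a nonnegative function in $L_{(p/q)',\,w}(\cX)$, normalized by $\|h\|_{L_{(p/q)',w}(\cX)} = 1$, which nearly realizes the dual norm, so that $\|f\|_{L_{q,w}(\cX)}^q$ is close to $\int_\cX |f|^q h\, w\, d\mu$. Apply the Rubio de Francia construction to the maximal operator on $L_{(p/q)',\,w}(\cX)$: set $\mathcal{R}h = \sum_{k\ge 0} (2\|\cM\|)^{-k}\cM^k h$, where $\cM^k$ is the $k$-fold iterate and $\|\cM\|$ is the norm of $\cM$ on $L_{(p/q)',\,w}(\cX)$, which by Theorem \ref{1008@thm5} (using $w \in A_q \subset A_{(p/q)'}$ after checking the exponent condition, with characteristic bounded by $K_0$) depends only on $p,q,K_0,K_2$. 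Then $h \le \mathcal{R}h$ pointwise, $\|\mathcal{R}h\|_{L_{(p/q)',w}(\cX)} \le 2$, and $\cM(\mathcal{R}h) \le 2\|\cM\| \cdot \mathcal{R}h$, i.e.\ $\mathcal{R}h \in A_1(\cX)$ with $[\mathcal{R}h]_{A_1} \le 2\|\cM\|$. Now set $\tilde w = (\mathcal{R}h)^{1 - p/(p/q)'} w^{?}$ — more precisely the standard factorization shows $v := (\mathcal{R}h) \, w$ satisfies $v \in A_p(\cX)$ with $[v]_{A_p}$ bounded in terms of $[\mathcal{R}h]_{A_1}$, $[w]_{A_q}$, $p$, and $q$ only (this is the Jones factorization / the elementary inequality $[v w^{\text{power}}]_{A_p} \le [v]_{A_1}^{\text{power}} [w]_{A_q}^{\text{power}}$, all exponents depending only on $p,q$). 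Apply \eqref{eq0222_01} with this $\tilde w = v$ once $\mathcal{K}_0$ is chosen $\ge$ this bound, then undo the duality: $\int |f|^q h w \le \int |f|^q \mathcal{R}h\, w = \|f\|_{L_{p,v}}^{?} \cdots$ — carry out Hölder with exponents $p/q$ and $(p/q)'$ to pass from $\int|f|^q \mathcal{R}h w$ to $\|f\|_{L_{p,w}} \cdot$(stuff), apply \eqref{eq0222_01}, and then Hölder again in the other direction, absorbing the constant $2$ from $\|\mathcal{R}h\|$. Tracking constants gives \eqref{eq0222_02} with the factor $4N_0$.

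\textbf{Step 2: the case $q \ge p$.} Here one runs the dual argument: test $\|g\|_{L_{q,w}}$ — no, rather, one applies the $q<p$ case to the pair $(g,f)$ with the conjugate exponents and the dual weight $w^{1-q'}\in A_{q'}$, whose characteristic satisfies $[w^{1-q'}]_{A_{q'}} = [w]_{A_q}^{q'-1} \le K_0^{q'-1}$, hence is still controlled. Since $p' < q'$ when $p < q$ — wait, if $q \ge p$ then $q' \le p'$, so we are again in a ``smaller exponent'' situation and Step 1 applies to the conjugated data, after noting that \eqref{eq0222_01} for $(f,g)$ and all $\tilde w \in A_p$ with $[\tilde w]_{A_p} \le \mathcal{K}_0$ is equivalent, by duality, to the analogous inequality for $(g^{?}, f^{?})$ — one must be slightly careful that \eqref{eq0222_01} is stated for the fixed exponent $p$, not a range, so in fact the standard trick is: the hypothesis at exponent $p$ self-improves, by the $q<p$ case just proved, to the same inequality at every exponent $r$ between and a controlled weight class; then for $q \ge p$ apply that improved statement at exponent $r$ close to $q$ from below. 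I will instead simply invoke the two-sided formulation and run Rubio de Francia on the operator $\cM$ acting on $L_{q,w}(\cX)$ directly when $q \ge p$: the iterate $\mathcal{R}g := \sum_k (2\|\cM\|_{L_{q,w}})^{-k}\cM^k(|f|)$ built from $|f|$ itself gives $|f| \le \mathcal{R}g$, $\|\mathcal{R}g\|_{L_{q,w}} \le 2\|f\|_{L_{q,w}}$, $[\mathcal{R}g]_{A_1}$ controlled, so $v = w(\mathcal{R}g)^{p-q}$ — with the correct negative power — lies in $A_p(\cX)$ with controlled characteristic; applying \eqref{eq0222_01} to $(f,g)$ with this $v$ and running Hölder as before closes the estimate.

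\textbf{Main obstacle.} The genuinely delicate point, and the reason the ``refined'' version is stated, is the bookkeeping of the characteristic constants: one must verify that at every stage the weight fed into \eqref{eq0222_01} has $A_p$-characteristic bounded by a quantity $\mathcal{K}_0$ that depends \emph{only} on $p,q,K_0,K_2$ and not on $w$ itself, $f$, $g$, or $N_0$. This forces one to use the quantitative form of Theorem \ref{1008@thm5} (the maximal function bound with norm depending only on $p,K_0,K_2$) at the precise exponent $(p/q)'$ or $q$, to check the exponent inequalities that let $w \in A_q$ sit inside the relevant class, and to apply an \emph{effective} (constant-tracking) version of the Jones factorization inequality $[v_1^{\theta_1} v_2^{\theta_2}]_{A_p} \le [v_1]_{A_1}^{\theta_1'}[v_2]_{A_q}^{\theta_2'}$. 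None of these steps is deep, but the value $\mathcal{K}_0$ must be assembled explicitly from them, and it is this explicit dependence — rather than the mere existence of the extrapolated inequality — that is the content of the theorem.
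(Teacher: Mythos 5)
The paper does not prove Theorem \ref{1017@@thm1}; it cites \cite[Theorem 2.5]{MR3812104} outright, so there is no internal proof to compare against. Your proposal is a genuine independent attempt at reconstructing the argument, and the overall strategy — run the Rubio de Francia iteration while tracking all $A_p$-characteristic constants, using the quantitative maximal bound of Theorem \ref{1008@thm5} and a quantitative Jones-type factorization — is indeed the right skeleton. However, as written it contains at least one genuine error and one significant unresolved confusion.

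The concrete error is in Step 1. You want the Hardy--Littlewood maximal operator $\cM$ to be bounded on $L_{(p/q)',\,w}(\cX)$ by Theorem \ref{1008@thm5}, and you justify this by claiming $w\in A_q(\cX)\subset A_{(p/q)'}(\cX)$. Recall $A_s\subset A_t$ iff $s\le t$, so this inclusion requires $q\le (p/q)'=p/(p-q)$, equivalently $p(q-1)\le q^2$. This fails whenever $p>q^2/(q-1)$; for instance $q=2$, $p=5$ gives $(p/q)'=5/3<2$, and $A_2\not\subset A_{5/3}$. Thus the RdF algorithm, as you set it up in the downward direction $q<p$, is run with respect to an operator whose boundedness you cannot establish from the given data. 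The standard fix is to use the dual RdF algorithm built from $\cM'h=\cM(hw)/w$ (or equivalently work with $w^{1-q'}\in A_{q'}$ on $L_{q'}$), and the resulting factorization of the auxiliary $A_p$ weight has a different exponent structure than what you wrote; simply ``checking the exponent condition'' as you promise parenthetically will expose that it is not satisfied.

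Step 2 is internally inconsistent: you first try a duality reduction to Step 1, catch yourself mid-sentence, correctly observe that duality does not straightforwardly swap the pair $(f,g)$ at a fixed hypothesis exponent, and then switch to a direct upward RdF iteration. That last plan is close to correct — iterate $\cM$ on (a normalized version of) $|f|+|g|$ in $L_{q,w}$, form $\tilde w=w(\mathcal{R}h)^{p-q}$, and invoke the factorization lemma $[w\,u^{p-q}]_{A_p}\lesssim [u]_{A_1}^{q-p}[w]_{A_q}$ for $p\le q$ — but none of this is actually carried out, and the normalization needed to make the $\|\mathcal{R}h\|_{L_{q,w}}$ bound close the loop (so that the implicit $\|f\|_{L_{q,w}}$ on the right can be absorbed) is not addressed. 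The question marks and ``carry out Hölder ... (stuff)'' placeholders in the final Hölder steps, where the factor $4$ in \eqref{eq0222_02} should emerge, also need to be filled in before this counts as a proof.

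In short: right framework, wrong exponent bookkeeping in the downward case, and a step that self-corrects without ever landing on a verified argument in the upward case. A clean write-up should split cleanly by $q\ge p$ versus $q<p$, use the $A_1$-majorant built in $L_{q,w}$ for the first and the dual construction for the second, and explicitly verify the $A_p$-bound on the auxiliary weight in each case with an exponent inequality that actually holds.
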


\begin{proof}
See \cite[Theorem 2.5]{MR3812104}.
\end{proof}

\noindent
\emph{Proof of Theorem \ref{1016@thm1}}
We first prove the a priori estimate \eqref{1017@e4}.
Assume $\vec u\in C^\infty_0((-\infty,T]\times \Omega)$ satisfies
\[
\vec u_t+(-1)^m \cL\vec u+\lambda\vec u=\sum_{|\alpha| \le m}D^\alpha \vec f_\alpha \quad \text{in }\, \Omega_T.
\]
Let $\tilde{w}_2\in A_p(\cX_2)$ with $[\tilde{w}_2]_{A_p}\le \cK_0$, where  $\cK_0=\cK_0(p,q,K_0,K_2'')\ge 1$ is the constant in Theorem \ref{1017@@thm1}.
Notice from Lemma \ref{160629@lem1} that  $\cX_1\times \cX_2$ is a space of homogeneous type in $\bR^{d+1}$ with a doubling constant $K_2=K_2(K_2'K_2'')$, and
\[
\tilde{w}=w_1\tilde{w}_2\in A_p(\cX_1\times \cX_2) \quad \text{with}\quad [\tilde{w}]_{A_p}\le \tilde{K}_0,
\]
where $\tilde{K}_0=\tilde{K}_0(p,q,K_0,K_2',K_2'')$.
By applying Theorem \ref{1008@thm1}, there exist constants 
\begin{align*}
&\gamma=\gamma(d,m,n,\delta,p,q,K_0,K_2',K_2'')\in (0,1/6),\\
&\lambda_0=\lambda_0(d,m,n,\delta,p,q, K_0,K_2',K_2'',R_0,K)>0
\end{align*}
such that, under Assumption \ref{0923.ass1} $(\gamma)$, we have 
\begin{equation}		\label{1017@@eq1}
\sum_{|\alpha| \le m}\lambda^{1-\frac{|\alpha|}{2m}}\|D^\alpha \vec u\|_{L_{p,\tilde{w}}(\Omega_T)}\le N_1\sum_{|\alpha| \le m}\lambda^{\frac{|\alpha|}{2m}}\|\vec f_\alpha\|_{L_{p,\tilde{w}}(\Omega_T)},
\end{equation}
where $\lambda\ge \lambda_0$ and $N_1=N_1(d,m,n,\delta,p,q,K_0,K_2',K_2'')$.
Set  
\begin{align*}
&\psi(t,x'')=\sum_{|\alpha|\le m}\lambda^{1-\frac{|\alpha|}{2m}}\| I_{\Omega_T}D^\alpha \vec u(t,\cdot,x'')\|_{L_{p,w_1}(\cX_1)},\\
&\phi(t,x'')=\sum_{|\alpha| \le m}\lambda^{\frac{|\alpha|}{2m}}\| I_{\Omega_T}\vec f_\alpha(t,\cdot,x'')\|_{L_{p,w_1}(\cX_1)}.
\end{align*}
It follows from \eqref{1017@@eq1} that  
\begin{equation*}
\|\psi\|_{L_{p,\tilde{w}_2}(\cX_2)}\le N_2\|\phi\|_{L_{p,\tilde{w}_2}(\cX_2)},
\end{equation*}
where $N_2=N_2(d,m,n,\delta,p,q,d_1,d_2,K_0,K_2',K_2'')$. 
Since the above inequality is satisfied for any $\tilde{w}_2\in A_p(\cX_2)$ with $[\tilde{w}_2]_{A_p}\le \cK_0$, by Theorem \ref{1017@@thm1}, we have 
\[
\|\psi\|_{L_{q,w}(\cX_2)}\le 4N_2\|\phi\|_{L_{q,w}(\cX_2)},
\]
which gives the estimate \eqref{1017@e4}.

For the solvability in $\mathring{\cH}^m_{p,q,w}(\Omega_T)$ of the system \eqref{160621@eq1}, we use the argument in \cite[Section 8]{MR3812104}. 
Owing to Lemma \ref{160621@lem1} and the a priori estimate, we may assume that $\vec f_\alpha\in L_{p,q,w}(\Omega_T)\cap L_\infty(\Omega_T)$ with bounded supports.
By Lemma \ref{1016@lem1} and the doubling properties of $\cX_i$ and $w_i$, there exist $\mu_1$ and $\mu_2$ depending only on $p$, $q$, $K_0$, $K_2'$, and $K_2''$ such that 
\begin{equation}		\label{190214@B1}
\mu_1,\mu_2>1, \quad \frac{p\mu_1}{\mu_1-1}=\frac{q\mu_2}{\mu_2-1}=:p_1,
\end{equation}
and 
for any $(x',x'')\in \cX_1\times \cX_2$ and $r>0$, 
\begin{equation}		\label{190213@A1}
\begin{aligned}
\dashint_{\cB_{2r}^{\cX_1}(x')} w_1^{\mu_1}\,dy' &\le N\dashint_{\cB_r^{\cX_1}(x')}w_1^{\mu_1}\,dy',\\
\dashint_{\cB_{2r}^{\cX_2}(x'')} w_2^{\mu_2}\,dy'' &\le N\dashint_{\cB_r^{\cX_2}(x'')}w_2^{\mu_2}\,dy'',
\end{aligned}
\end{equation}
where $N=N(p,q,K_0,K_2',K_2'')>0$.
Set 
$\bar{\gamma}=\min(\gamma,\gamma_1)$ and $\bar{\lambda}_0=\max(\lambda_0,\lambda_1)$,
where $\gamma_1$ and $\lambda_1$ are constants in Lemma \ref{160612@lem1}.
Then by Lemma \ref{160612@lem1}, under Assumption \ref{0923.ass1} $(\bar{\gamma})$, for any $\lambda\ge \bar{\lambda}_0$, there exists a unique $\vec u\in \mathring{\cH}^m_{p_1}(\Omega_T)$ satisfying  \eqref{160621@eq1}.
Moreover, by H\"older's inequality and \eqref{190214@B1}, $\vec u$ is locally in ${\cH}^m_{p,q,w}(\Omega_T)$ with the estimate
\begin{equation}		\label{190214@eq1}
\begin{aligned}
\|D^\alpha \vec u\|_{L_{p,q,w}(Q\cap \Omega_T)}\le \|w_1\|_{L_{\mu_1}(Q')}^{1/p}\|w_2\|_{L_{\mu_2}(Q'')}^{1/q}\|D^\alpha \vec u\|_{L_{p_1}(Q\cap \Omega_T)}
\end{aligned}
\end{equation}
for all compact set $Q\subset Q'\times Q''\subset \cX_1\times \cX_2$ and $|\alpha|\le m$.

To complete the proof, it suffices to show that $\vec u\in \mathring{\cH}^m_{p,q,w}(\Omega_T)$.
Assume that $\vec f_\alpha$ are supported in $\cB_R\cap \Omega_T$ for some $R\ge1$.
For $k\in \{0,1,2,\ldots\}$, let $\eta_k$ be a smooth function on $\bR^{d+1}$ satisfying
$$
0\le \eta_k\le1, \quad \eta_k \equiv 0 \text{ on } \cB_{2^kR}, \quad \eta_k\equiv 1 \text{ on }\, \bR^{d+1}\setminus \cB_{2^{k+1}R},
$$
$$
|(\eta_k)_t|\le N2^{-2mk}, \quad |D^i \eta_k|\le N2^{-ik}, \quad i\in \{0,1,\ldots,m\}.
$$
Then $\eta_k \vec u\in \mathring{\cH}^m_{p_1}(\Omega_T)$ satisfies 
\begin{align*}
&(\eta_k \vec u)_t+(-1)^m \cL (\eta_k \vec u)+\lambda \eta_k \vec u\\
&=(\eta_k)_t\vec u+\sum_{|\alpha|\le m, |\beta|\le m} \sum_{1\le |\beta'|\le |\beta|} c_{\beta,\beta'}D^\alpha (A^{\alpha\beta} D^{\beta'}\eta_kD^{\beta-\beta'}\vec u) \quad \text{in }\, \Omega_T,
\end{align*}
where $c_{\beta,\beta'}$ are appropriate constants.
By applying the a priori estimate in Lemma \ref{160612@lem1}, we have 
\begin{align*}
&\sum_{|\alpha|\le m}\lambda^{1-\frac{|\alpha|}{2m}}\|D^\alpha (\eta_k \vec u)\|_{L_{p_1}(\Omega_T)}\\
&\le N \|(\eta_k)_t \vec u\|_{L_{p_1}(\Omega_T)}+N\sum_{|\alpha|\le m, |\beta|\le m} \sum_{1\le |\beta'|\le |\beta|}\lambda^{\frac{|\alpha|}{2m}}\|D^{\beta' }\eta_k D^{\beta-\beta'}\vec u\|_{L_{p_1}(\Omega_T)}\\
&\le N2^{-k} \sum_{|\alpha|\le m}\lambda^{1-\frac{|\alpha|}{2m}}\|D^\alpha  \vec u\|_{L_{p_1}((\cB_{2^{k+1}R}\setminus \cB_{2^{k}R})\cap \Omega_T)},
\end{align*}
where $N=N(d,m,n,\delta,p_1)$ and  we used the fact that $\lambda\ge 1$ in the last inequality.
Thus, by induction, we obtain that, for $k\ge 1$, 
$$
\begin{aligned}
&\sum_{|\alpha|\le m}\lambda^{1-\frac{|\alpha|}{2m}}\|D^\alpha  \vec u\|_{L_{p_1}((\cB_{2^{k+1}R}\setminus \cB_{2^{k}R})\cap \Omega_T)}\\
&\le N2^{-\frac{k(k-1)}{2}} \sum_{|\alpha|\le m}\lambda^{1-\frac{|\alpha|}{2m}}\|D^\alpha  \vec u\|_{L_{p_1}(\cB_{2R}\cap \Omega_T)}=:2^{-\frac{k(k-1)}{2}}N_1.
\end{aligned}
$$
From this together with \eqref{190213@A1} and \eqref{190214@eq1}, it follows that 
$$
\begin{aligned}
&\sum_{|\alpha|\le m}\lambda^{1-\frac{|\alpha|}{2m}}\|D^\alpha \vec u\|_{L_{p,q,w}((\cB_{2^{k+1}R}\setminus \cB_{2^{k}R})\cap \Omega_T)}\\
&\le \|w_1\|_{L_{\mu_1}(\cB_{2^{k+1}R}^{\cX_1})}^{1/p}\|w_2\|_{L_{\mu_2}(\cB^{\cX_2}_{2^{k+1}R})}^{1/q}\sum_{|\alpha|\le m} \lambda^{1-\frac{|\alpha|}{2m}}\|D^\alpha \vec u\|_{L_{p_1}((\cB_{2^{k+1}R}\setminus \cB_{2^{k}R})\cap \Omega_T)}\\
&\le N_1 N_0^{k}2^{-\frac{k(k-1)}{2}}\|w_1\|_{L_{\mu_1}(\cB_R^{\cX_1})}^{1/p}\|w_1\|_{L_{\mu_2}(\cB_R^{\cX_2})}^{1/q},
\end{aligned}
$$
where $N_0=N_0(p,q,K_0,K_2',K_2'')$. 
This implies that $\vec u\in \mathring{\cH}^m_{p,q,w}(\Omega_T)$.
The theorem is proved.
\qed

\section{Appendix}		\label{sec8}

\begin{lemma}[{\cite[Sec.10.2]{MR0519341}}]			\label{0922.lem2}
Let $r\in (0,\infty)$, $1< q\le p <\infty$, and $k=0,1,\ldots,2m-1$.
Assume that 
\[
\frac{1}{q}-\frac{1}{p}\le \frac{2m-k}{d+2m}.
\]
If $u\in W^{1,2m}_q(Q_r)$, then we have $D^ku\in L_p(Q_r)$ and 
$$
\|D^ku\|_{L_p(Q_r)}\le N\|u\|_{W^{1,2m}_q(Q_r)},
$$
where $N=N(d,m,k,p,q,r)$.
The statement remains true, provided that $Q_r$ is replaced by $Q_r^+$.
\end{lemma}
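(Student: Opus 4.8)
The plan is to reduce the statement on the cylinder $Q_r$ (respectively $Q_r^+$) to the corresponding anisotropic Sobolev embedding on the whole space $\bR^{d+1}$, whose ``parabolic'' homogeneous dimension is $d+2m$, and then to transfer the conclusion back. First I would remove the $r$-dependence by the parabolic dilation $(t,x)\mapsto(r^{2m}t,rx)$, which carries $Q_r$ onto $Q_1$ and $W^{1,2m}_q$ into itself, the lower order terms merely acquiring powers of $r$ (this is why $N$ is permitted to depend on $r$). Next I would invoke a bounded linear extension operator $E\colon W^{1,2m}_q(Q_1)\to W^{1,2m}_q(\bR^{d+1})$: in the half-cylinder case one first extends across the flat face $\{x_1=0\}$ by a higher order Hestenes/Babich type reflection adapted to $2m$ spatial derivatives, and then extends in the remaining directions, the relevant point being that such reflections preserve membership in $W^{1,2m}_q$ with a controlled norm. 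The construction of $E$ is where the technical work concentrates; it is classical, and can also be quoted directly from \cite[Sec.~10.2]{MR0519341}.

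On $\bR^{d+1}$ I would identify $W^{1,2m}_q(\bR^{d+1})$ with the anisotropic Bessel potential space attached to the symbol $1+|\xi|^{2m}+i\tau$, whose modulus is comparable to $1+|\xi|^{2m}+|\tau|$: by the anisotropic Mikhlin--H\"ormander multiplier theorem relative to the parabolic scaling, the multipliers $|\xi|^{|\gamma|}/(1+|\xi|^{2m}+i\tau)$ for $|\gamma|\le 2m$ and $i\tau/(1+|\xi|^{2m}+i\tau)$ are bounded on $L_q(\bR^{d+1})$ for every $q\in(1,\infty)$. Writing $g=(I+(-\Delta_x)^m+\partial_t)(Eu)\in L_q(\bR^{d+1})$, one then expresses $D^k(Eu)$ as $\mathcal I_{2m-k}$ applied to an $L_q$-bounded transform of $g$, where $\mathcal I_{2m-k}$ is the parabolic Riesz potential of order $2m-k$, whose convolution kernel is homogeneous of degree $-(d+2m)+(2m-k)$ with respect to the parabolic distance $\rho$. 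The Hardy--Littlewood--Sobolev inequality on the space of homogeneous type $(\bR^{d+1},\rho,\mathrm{Leb})$ of homogeneous dimension $d+2m$ then gives $\mathcal I_{2m-k}\colon L_q\to L_{p^\ast}$ whenever $\tfrac1q-\tfrac1{p^\ast}=\tfrac{2m-k}{d+2m}$, and $\mathcal I_{2m-k}\colon L_q\to L_\infty$ (hence into every $L_{p^\ast}$ with $p^\ast<\infty$) when $\tfrac1q\le\tfrac{2m-k}{d+2m}$.

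Finally I would restrict back to $Q_1$. Since $Q_1$ has finite measure and the hypothesis $\tfrac1q-\tfrac1p\le\tfrac{2m-k}{d+2m}$ forces $p\le p^\ast$, H\"older's inequality yields $\|D^k u\|_{L_p(Q_1)}\le N\|D^k(Eu)\|_{L_{p^\ast}(\bR^{d+1})}$, and the right-hand side is controlled by $N\|Eu\|_{W^{1,2m}_q(\bR^{d+1})}\le N\|u\|_{W^{1,2m}_q(Q_1)}$; undoing the dilation then produces the stated inequality with $N=N(d,m,k,p,q,r)$. The case of $Q_r^+$ is handled identically once the reflection extension across $\{x_1=0\}$ is available.

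I expect the main obstacle to be precisely the construction (or careful citation) of the extension operator preserving the anisotropic space $W^{1,2m}_q$, in particular across the flat part of the boundary in the half-cylinder case; the remaining steps are a routine application of anisotropic harmonic analysis. Since the result is standard and is attributed to \cite[Sec.~10.2]{MR0519341}, in the appendix it would suffice to record the scaling reduction and the above chain of reductions, referring to that source for the extension operator and for the anisotropic embedding on $\bR^{d+1}$.
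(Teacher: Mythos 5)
The paper does not actually prove this lemma: it is stated in the appendix purely as a citation to Besov--Il'in--Nikol'skii \cite[Sec.~10.2]{MR0519341}, and no argument is given. Your sketch is therefore not ``the paper's proof'', but it is a faithful outline of how such anisotropic embeddings are established in that reference: parabolic rescaling to a unit cylinder, a $W^{1,2m}_q$-preserving extension operator (exploiting the product structure of $Q_1=(-1,0)\times B_1$, or a higher-order reflection across $\{x_1=0\}$ for $Q_1^+$), an anisotropic multiplier theorem to reduce to a parabolic potential of order $2m-k$, and the Hardy--Littlewood--Sobolev inequality with homogeneous dimension $d+2m$, followed by H\"older on the bounded cylinder. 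This is the standard route and is consistent with the citation the paper gives.

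Two small points deserve care. First, your claim that $\mathcal I_{2m-k}\colon L_q\to L_\infty$ when $\tfrac1q\le\tfrac{2m-k}{d+2m}$ is not quite right as stated: at the critical exponent $\tfrac1q=\tfrac{2m-k}{d+2m}$ the homogeneous Riesz potential maps into $\BMO$, not $L_\infty$, and even in the strictly supercritical range the global homogeneous kernel fails to lie in $L_{q'}(\bR^{d+1})$ at infinity. Both issues disappear if you replace the homogeneous Riesz potential by the anisotropic \emph{Bessel} potential coming from the symbol $1+|\xi|^{2m}+i\tau$ (whose kernel has rapid decay at infinity), or, since $Q_1$ has finite measure and $p<\infty$, simply lower $q$ to some $q'<q$ so that $\tfrac1{q'}-\tfrac1p<\tfrac{2m-k}{d+2m}$ and apply the subcritical HLS estimate after the trivial embedding $L_q(Q_1)\hookrightarrow L_{q'}(Q_1)$. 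Second, for the half-cylinder you should be explicit that the higher-order reflection must match up not only the spatial derivatives through order $2m$ but also $\partial_t u$; this is automatic for a Hestenes-type reflection in $x_1$ alone (which commutes with $\partial_t$), but it is worth recording. With these adjustments the outline is correct.
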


\begin{lemma}[{\cite[Sec.18.12]{MR0521808}}]		\label{0922.lem4}
Let $r\in (0,\infty)$, $\mu\in (0,1)$, $1<q<\infty$, and $k=0,1,\ldots,2m-1$.
Assume that
\[
q\ge\frac{d+2m}{2m-k-\mu}.
\]
If $u\in W^{1,2m}_q(Q_r)$, then we have $D^ku\in C^\mu(Q_r)$ and 
\begin{equation*}
\|D^k u\|_{C^\mu(Q_r)}\le N\|u\|_{W^{1,2m}_q(Q_r)},
\end{equation*}
where $N=N(d,m,k,q,r,\mu)$.
The statement remains true, provided that $Q_r$ is replaced by $Q_r^+$.
\end{lemma}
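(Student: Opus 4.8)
The plan is to deduce this parabolic Sobolev--Morrey embedding from a potential representation of $D^k_x u$ followed by a fractional‑integration (Morrey‑type) estimate on the space of homogeneous type $(\bR^{d+1},\rho)$, where $\rho$ is the parabolic distance from \eqref{eq0215_01} and the associated homogeneous dimension is $Q=d+2m$. By the parabolic dilation $(t,x)\mapsto(r^{2m}t,rx)$ we may take $r=1$. First I would reduce to the whole space: extend $u$ to a compactly supported $\widetilde u\in W^{1,2m}_q(\bR^{d+1})$ with $\|\widetilde u\|_{W^{1,2m}_q(\bR^{d+1})}\le N\|u\|_{W^{1,2m}_q(Q_1)}$ by an iterated extension, first in $t$ (reflection across the two end‑caps of the cylinder, preserving $L_q$ control of $u$ and $u_t$ and commuting with spatial derivatives) and then in $x$ (an order‑$2m$ Sobolev reflection across $\partial B_1$, commuting with $\partial_t$). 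The half‑cylinder $Q_1^+$ is reduced to this case by first performing the analogous order‑$2m$ extension in $x_1$ across $\{x_1=0\}$.

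Next I would represent $\widetilde u$ through the fundamental solution $\Phi$ of the model operator $\partial_t+(-\Delta)^m+1$. Setting $g:=\widetilde u_t+(-\Delta)^m\widetilde u+\widetilde u$, one has $g\in L_q(\bR^{d+1})$ with $\|g\|_{L_q}\le N\|u\|_{W^{1,2m}_q(Q_1)}$ (all spatial derivatives of order $\le 2m$ and $\widetilde u_t$ lie in $L_q$), and $\widetilde u=\Phi*g$, so $D^k_x\widetilde u=(D^k_x\Phi)*g$. Since $\Phi(t,x)=e^{-t}t^{-d/(2m)}P(t^{-1/(2m)}x)$ for $t>0$ with $P$ a fixed Schwartz function (and $\Phi\equiv 0$ for $t\le 0$), the kernel $\cK:=D^k_x\Phi$ satisfies the parabolic self‑similar bounds
\[
|\cK(t,x)|\le N\,e^{-ct}\,\rho(t,x)^{-(d+k)},\qquad |\cK(X)-\cK(X')|\le N\,\rho(X,X')^{\mu}\,\rho(X,0)^{-(d+k)-\mu}\ \text{ for }2\rho(X,X')\le\rho(X,0);
\]
that is, $\cK$ is, modulo a rapidly decaying tail, the Riesz‑type kernel $I_{s}$ on $(\bR^{d+1},\rho)$ with $s=Q-(d+k)=2m-k$.

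The heart of the matter is then the Morrey estimate on $(\bR^{d+1},\rho)$. Because $(d+k)q'<Q$ (equivalently $q>Q/(2m-k)$, which holds since $q\ge Q/(2m-k-\mu)$), one has $\|\rho(X,\cdot)^{-(d+k)}\|_{L_{q'}(\cB_{cr}(X))}\le N r^{(2m-k)-Q/q}$, hence by H\"older's inequality
\[
\Big|(\cK*g)(X)-\big(\cK*g\big)_{\cB_r(X)}\Big|\le N\,r^{\,(2m-k)-Q/q}\,\|g\|_{L_q}\le N\,r^{\mu}\,\|g\|_{L_q},
\]
the last step using $(2m-k)-Q/q\ge\mu$, which is exactly the hypothesis $q\ge(d+2m)/(2m-k-\mu)$; similarly $\|\cK*g\|_{L_\infty}\le N\|g\|_{L_q}$ since $\cK\in L_{q'}$. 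Comparing $D^k_x\widetilde u(X)$ and $D^k_x\widetilde u(X')$ to their common average over a $\cB_{2\rho(X,X')}(\cdot)$‑type cylinder then yields $[D^k_x\widetilde u]_{C^\mu(\bR^{d+1})}\le N\|g\|_{L_q}$, and restriction to $Q_1$ gives the claim.

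The step I expect to be the main obstacle is obtaining the sharp kernel bounds for $D^k_x\Phi$ in the parabolic metric (equivalently, setting up the local integral representation of $D^k_x u$ with a remainder kernel of parabolic homogeneity exactly $-(d+k)$, as in the cited references) together with the bookkeeping at the borderline exponent: one must organize the potential estimate around averages over cylinders rather than over dyadic annuli, so that no logarithmic loss appears and the threshold $q=(d+2m)/(2m-k-\mu)$ is seen to be admissible. The subcritical companion estimate (Lemma \ref{0922.lem2}) would be obtained from the same representation by Hardy--Littlewood--Sobolev in place of the Morrey estimate.
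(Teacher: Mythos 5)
The paper itself gives no proof of this lemma; it is quoted directly as a known anisotropic Sobolev--H\"older embedding from Besov--Il'in--Nikol'ski\u{\i} \cite[Sec.~18.12]{MR0521808}, whose own treatment is via integral representation formulas, so there is no in-paper argument to compare against. Your potential-theoretic route (extend, write $D^k_x\widetilde u=(D^k_x\Phi)*g$ with $g=\widetilde u_t+(-\Delta)^m\widetilde u+\widetilde u$, then run a parabolic Morrey/Campanato estimate on the space of homogeneous type $(\bR^{d+1},\rho)$ of dimension $Q=d+2m$) is a correct and standard way to obtain the result and is in the same spirit as the source.

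One small remark on the step you yourself flag as the main obstacle: as written, the inequality $\lvert(\cK*g)(X)-(\cK*g)_{\cB_r(X)}\rvert\le N r^{(2m-k)-Q/q}\|g\|_{L_q}$ does not follow from H\"older's inequality applied to $\|\rho(X,\cdot)^{-(d+k)}\|_{L_{q'}(\cB_{cr}(X))}$ alone --- that only controls the near-field contribution $\rho(X,Y)\lesssim r$; the far field needs the kernel smoothness. Moreover, if for the far field one uses the $\mu$-H\"older kernel bound $|\cK(X)-\cK(X')|\le N\rho(X,X')^\mu\rho(X,0)^{-(d+k)-\mu}$, then at the endpoint $q=(d+2m)/(2m-k-\mu)$ the exponent satisfies $(d+k+\mu)q'=Q$ and the integral $\int_{\rho>2r}\rho^{-(d+k+\mu)q'}$ diverges logarithmically. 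The fix is simpler than reorganizing over cylinders: away from the origin $\cK=D^k_x\Phi$ is smooth and satisfies the full parabolic Lipschitz estimate $|\cK(X)-\cK(X')|\le N\rho(X,X')\rho(X,0)^{-(d+k)-1}$ for $2\rho(X,X')\le\rho(X,0)$, and since $(d+k+1)q'>Q$ holds precisely because $\mu<1$, the far-field integral then converges and yields $r^{(2m-k)-Q/q}\ge r^{\mu}$ with no loss at the threshold. With that adjustment, together with the Campanato characterization of $C^\mu$ on cylinders, your outline is complete.
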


\begin{lemma}		\label{160629@lem1}
Let $\cX_1$ and $\cX_2$ are spaces of homogeneous type with doubling constants $K_2'$ and $K_2''$ in $\bR^{d_1}$ and $\bR\times \bR^{d_2}$, $d_1+d_2=d$, respectively.
Then $\cX_1\times \cX_2$ is a space of homogeneous type in $\bR^{d+1}$ with the distance $\rho$ in \eqref{eq0215_01} and  a doubling constant $K_2=K_2(K_2'K_2'')$.
Moreover, if $p\in (1,\infty)$, $K_0',\, K_0''\ge 1$, and 
$$
w(t,x)=w_1(x')w_2(t,x''), \quad x'\in \cX_1, \quad (t,x'')\in \cX_2,
$$
where $w_1\in A_p(\cX_1)$ with $[w_1]_{A_p}\le K_0'$ and $w_2\in A_p(\cX_2)$ with $[w_2]_{A_p}\le K_0''$, then 
$w\in A_p(\cX_1\times \cX_2)$ with $[w]_{A_p}\le K_0=K_0(p,K_0',K_0'',K_2',K_2'')$.
\end{lemma}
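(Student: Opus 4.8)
The plan is to verify the three defining properties of a space of homogeneous type for $\cX_1\times\cX_2$ equipped with the parabolic distance $\rho$ from \eqref{eq0215_01} and the Lebesgue measure on $\bR^{d+1}$, and then to estimate the $A_p$ characteristic of the product weight. First I would observe that $\rho$ is already a quasi-metric on $\bR^{d+1}$ (indeed a genuine metric-like object with constant $K_1=1$, as noted in Section \ref{161006@sec1}), so the quasi-metric property on $\cX_1\times\cX_2$ is inherited for free; only the doubling property and the $A_p$ bound require work. The key geometric fact is that a parabolic ball in $\bR^{d+1}$ factors as a product: writing $X=(t,x)=(t,x',x'')$ with $x'\in\bR^{d_1}$, $(t,x'')\in\bR\times\bR^{d_2}$, one has
\[
\cB_r(X)=B_r(x')\times \cB_r(t,x''),
\]
where $B_r(x')$ is the Euclidean ball in $\bR^{d_1}$ and $\cB_r(t,x'')$ is the parabolic ball in $\bR\times\bR^{d_2}$; this is immediate from $\rho(X,Y)=\max\{|x'-y'|,\rho_2((t,x''),(s,y''))\}$. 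Consequently, for $X\in\cX_1\times\cX_2$,
\[
\cB_r^{\cX_1\times\cX_2}(X)=\bigl(B_r(x')\cap\cX_1\bigr)\times\bigl(\cB_r(t,x'')\cap\cX_2\bigr)=\cB_r^{\cX_1}(x')\times \cB_r^{\cX_2}(t,x''),
\]
and the $(d+1)$-dimensional Lebesgue measure of this set is the product of the $d_1$-dimensional measure of the first factor and the $(d_2+1)$-dimensional measure of the second. Applying the doubling inequalities for $\cX_1$ and $\cX_2$ separately and multiplying then gives
\[
|\cB_{2r}^{\cX_1\times\cX_2}(X)|=|\cB_{2r}^{\cX_1}(x')|\,|\cB_{2r}^{\cX_2}(t,x'')|\le K_2'K_2''\,|\cB_r^{\cX_1}(x')|\,|\cB_r^{\cX_2}(t,x'')|=K_2'K_2''\,|\cB_r^{\cX_1\times\cX_2}(X)|,
\]
so $\cX_1\times\cX_2$ is of homogeneous type with doubling constant $K_2=K_2'K_2''$ (positivity and finiteness of the measures of balls follow from the same factorization). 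The elliptic case (when $\cX_2\subset\bR^{d_2}$) is identical with the Euclidean distance in place of $\rho$ in the second factor.

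For the weight estimate I would again exploit the product structure. With $w(t,x)=w_1(x')w_2(t,x'')$, Fubini's theorem gives, for any ball $\cB=\cB_r^{\cX_1}(x')\times\cB_r^{\cX_2}(t,x'')$,
\[
\dashint_{\cB}w\,dX=\Bigl(\dashint_{\cB_r^{\cX_1}(x')}w_1\,dx'\Bigr)\Bigl(\dashint_{\cB_r^{\cX_2}(t,x'')}w_2\,d(s,y'')\Bigr),
\]
and likewise for the integral of $w^{-1/(p-1)}=w_1^{-1/(p-1)}w_2^{-1/(p-1)}$, since this factors as a product of functions of the two sets of variables. Multiplying the two identities and using the definitions of $[w_1]_{A_p}$ over $\cX_1$ and $[w_2]_{A_p}$ over $\cX_2$ yields
\[
\Bigl(\dashint_{\cB}w\Bigr)\Bigl(\dashint_{\cB}w^{-\frac1{p-1}}\Bigr)^{p-1}\le [w_1]_{A_p(\cX_1)}\,[w_2]_{A_p(\cX_2)}\le K_0'K_0''.
\]
Taking the supremum over all $X\in\cX_1\times\cX_2$ and $r>0$ gives $w\in A_p(\cX_1\times\cX_2)$ with $[w]_{A_p}\le K_0'K_0''$; in particular the claimed dependence $K_0=K_0(p,K_0',K_0'',K_2',K_2'')$ holds (in fact $K_0'K_0''$ suffices, independent of $p$ and the doubling constants).

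The only genuine subtlety — and the step I would be most careful about — is justifying that the family of balls $\{\cB_r^{\cX_1\times\cX_2}(X)\}$ really does factor as claimed and hence that Fubini applies cleanly: one must check that $\cX_1\times\cX_2$ is open in $\bR^{d+1}$ (true, since $\cX_1$ and $\cX_2$ are open in their respective spaces) and that the parabolic ball about a point decomposes as the product of the Euclidean ball in the $x'$-variables with the parabolic ball in the $(t,x'')$-variables, which is exactly the observation that $\max\{a,\max\{b,c\}\}=\max\{a,b,c\}$ applied to the three pieces of the distance. Once this factorization is in hand, everything else is a routine application of Fubini's theorem and the one-factor doubling and $A_p$ estimates, so there is no real obstacle beyond bookkeeping.
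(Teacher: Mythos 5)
There is a genuine gap at the central geometric step. You claim the exact factorization
\[
\cB_r(X)=B_r(x')\times \cB_r(t,x''),
\]
and justify it by writing $\rho(X,Y)=\max\{|x'-y'|,\rho_2((t,x''),(s,y''))\}$. This identity for $\rho$ is false in the paper's setup: the parabolic distance \eqref{eq0215_01} is $\rho(X,Y)=\max\{|x-y|,|t-s|^{1/2m}\}$, where $|x-y|$ is the \emph{Euclidean} norm in $\bR^d$, not $\max\{|x'-y'|,|x''-y''|\}$. Consequently the spatial cross-section of $\cB_r(X)$ is the Euclidean ball $B_r(x)\subset\bR^d$, which is strictly smaller than the product $B_r(x')\times B_r(x'')\subset\bR^{d_1}\times\bR^{d_2}$. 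What is actually true is only a pair of inclusions: $\cB_r^{\cX}(X)\subset \cB_r^{\cX_1}(x')\times \cB_r^{\cX_2}(t,x'')$ because $|x-y|<r$ forces both $|x'-y'|<r$ and $|x''-y''|<r$, and conversely $\cB_{r/2}^{\cX_1}(x')\times \cB_{r/2}^{\cX_2}(t,x'')\subset \cB_r^{\cX}(X)$ because $|x'-y'|<r/2$ and $|x''-y''|<r/2$ give $|x-y|<r$. Your Fubini computations on a genuine product set are fine as computations, but they are not computations over $\cB_r^{\cX}(X)$, so the conclusions $K_2=K_2'K_2''$ and $[w]_{A_p}\le K_0'K_0''$ do not follow.

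The repair leads exactly to the paper's argument: use the two sandwich inclusions above and the doubling properties of $\cX_1$ and $\cX_2$ to compare $|\cB_r^{\cX}(X)|$ with $|\cB_r^{\cX_1}(x')|\,|\cB_r^{\cX_2}(t,x'')|$. This yields the doubling bound $|\cB_{2r}^{\cX}|\le (K_2'K_2'')^2|\cB_r^{\cX}|$, hence $K_2=(K_2'K_2'')^2$ (not $K_2'K_2''$), and in the $A_p$ estimate one must multiply the averages over the large product ball against the average over the (possibly much smaller) true ball $\cB_r^{\cX}$, which costs the factor $\bigl(|\cB_r^{\cX_1}||\cB_r^{\cX_2}|/|\cB_r^{\cX}|\bigr)^p\le (K_2'K_2'')^p$; this is precisely how the $K_2'$, $K_2''$ (and the exponent $p$) enter the claimed dependence $K_0=K_0(p,K_0',K_0'',K_2',K_2'')$. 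So your final constants are too strong, and the step you flagged as ``the only genuine subtlety'' is in fact the step where the argument fails.
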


\begin{proof}
Denote $\cX=\cX_1\times \cX_2$.
Without loss of generality, we assume that $0\in \cX$.
Since $B_{2r}^\cX\subset B^{\cX_1}_{2r}\times B^{\cX_2}_{2r}$ and $B^{\cX_1}_{r/2}\times B^{\cX_2}_{r/2}\subset B_r^\cX$, by using the doubling property, we have 
$$
|B^\cX_{2r}|\le |B^{\cX_1}_{2r}||B^{\cX_2}_{2r}|\le (K_2'K_2'')^2|B^{\cX_1}_{r/2}||B^{\cX_2}_{r/2}|\le (K'_2K_2'')^2|B_r^\cX|.
$$
Moreover, we obtain
$$
\bigg(\dashint_{B_r^\cX}w\,dx\,dt\bigg)\bigg(\dashint_{B_r^\cX}w^{-\frac{1}{p-1}}\,dx\,dt\bigg)^{p-1}\le \bigg(\frac{|B_r^{\cX_1}||B_{r}^{\cX_2}|}{|B_r^\cX|}\bigg)^pK_0'K_0''
$$
$$
\le \bigg(\frac{|B_r^{\cX_1}||B_{r}^{\cX_2}|}{|B_{r/2}^{\cX_1}||B^{\cX_2}_{r/2}|}\bigg)^pK_0'K_0''\le (K_2'K_2'')^pK_0'K_0''.
$$
The lemma is proved.
\end{proof}

The following lemma is used to show the estimate \eqref{160804@eq9}.
We recall the notation \eqref{160617@eq2}, and we  point out that when $\Omega$ is a Reifenberg flat with $\gamma\in \big(0,\frac{1}{2}\big)$, the inequality  \eqref{160804@eq9a} is valid with $N_0=N_0(d)$. 

\begin{lemma}
							\label{lem0221_1}	
Let $T\in (-\infty,\infty]$, $\Omega$ be a domain in $\bR^d$,  and $\Omega_T\subseteq\cX$, where  $\cX$ is an open set in $\bR^{d+1}$ and a space of homogeneous type with a doubling constant $K_2$.
Assume that there exist $R_0\in (0,1]$ and $N_0>0$ such that 
\begin{equation}		\label{160804@eq9a}
|\cC_R(X)|\ge N_0R^{d+2m}, \quad \forall X\in \Omega_T, \quad \forall R\in (0,R_0].
\end{equation}
Let $p\in (1,\infty)$, $K_0\ge 1$, $w\in A_p(\cX)$, and $[w]_{A_p}\le K_0$.
Suppose that  $D_0$ and $D_1$ are Borel sets satisfying $D_0\subset D_1\subset \Omega_T$, and that there exists a constant $\varepsilon\in (0,1)$ such that the following hold:
\begin{enumerate}[$(i)$]
\item
$w(D_0\cap \cB_{R}^{\cX}(X))<\varepsilon w(\cB^\cX_{R}(X))$ for $X\in \Omega_T$ and $R\ge R_0$.
\item
For any $X\in \Omega_T$ and for all $R\in (0, R_0]$ with $w(\cB^\cX_R(X)\cap D_0)\ge \varepsilon w(\cB_R^{\cX}(X))$, we have $\cC_R(X)\subset D_1$.
\end{enumerate}
Then we obtain 
\[
w(D_0)\le N\varepsilon w(D_1),
\]
where $N=N(d,p, K_0,K_2,N_0)$.

\end{lemma}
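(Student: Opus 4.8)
The plan is to run a Calder\'on--Zygmund ``crawling ink spots'' argument adapted to the weighted measure $w\,d\mu$ on the space of homogeneous type $(\cX,\rho)$. First I would observe that $w\,d\mu$ is a doubling measure on $\cX$: applying Lemma \ref{1015@lem3} with $E=\cB_r^\cX(X)$ and the ball $\cB_{2r}^\cX(X)$, together with $|\cB_{2r}^\cX(X)|\le K_2|\cB_r^\cX(X)|$, gives $w(\cB_{2r}^\cX(X))\le K_0K_2^p\,w(\cB_r^\cX(X))$, hence $w(\cB_{cr}^\cX(X))\le N(d,p,K_0,K_2)\,w(\cB_r^\cX(X))$ for any fixed $c>1$. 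In particular the Lebesgue differentiation theorem holds for $w\,d\mu$ on $\cX$, so there is a Borel set $D_0'\subseteq D_0$ with $w(D_0\setminus D_0')=0$ such that
\[
\lim_{r\to0^+}\frac{w(D_0\cap\cB_r^\cX(X))}{w(\cB_r^\cX(X))}=1\qquad\text{for every }X\in D_0'.
\]

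Next, for each $X\in D_0'$ I would introduce the critical radius
\[
r_X=\sup\Big\{r\in(0,R_0]:\ w(D_0\cap\cB_r^\cX(X))\ge\varepsilon\,w(\cB_r^\cX(X))\Big\}.
\]
By the density limit above and $\varepsilon<1$, this set contains an interval $(0,\delta)$, so $0<r_X\le R_0$. Passing to the limit along a maximizing sequence $r_j\uparrow r_X$, using $\bigcup_j\cB_{r_j}^\cX(X)=\cB_{r_X}^\cX(X)$ and continuity from below of $w\,d\mu$, one gets $w(D_0\cap\cB_{r_X}^\cX(X))\ge\varepsilon\,w(\cB_{r_X}^\cX(X))$; since $X\in\Omega_T$ and $r_X\le R_0$, hypothesis $(ii)$ then yields $\cC_{r_X}(X)\subset D_1$. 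Conversely, for every $r>r_X$ we have $w(D_0\cap\cB_r^\cX(X))<\varepsilon\,w(\cB_r^\cX(X))$: this is the definition of $r_X$ when $r\in(r_X,R_0]$ and it is hypothesis $(i)$ when $r>R_0$. In particular it holds at $r=5r_X$.

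The balls $\{\cB_{r_X}^\cX(X):X\in D_0'\}$ cover $D_0'$ and have radii bounded by $R_0$, so the Vitali $5r$-covering lemma in the separable metric space $(\cX,\rho)$ produces a countable pairwise disjoint subfamily $\{\cB_{r_{X_i}}^\cX(X_i)\}_i$ with $D_0'\subseteq\bigcup_i\cB_{5r_{X_i}}^\cX(X_i)$. Using the density bound at radius $5r_{X_i}$ and the doubling of $w\,d\mu$,
\[
w(D_0)=w(D_0')\le\sum_i w(D_0\cap\cB_{5r_{X_i}}^\cX(X_i))<\varepsilon\sum_i w(\cB_{5r_{X_i}}^\cX(X_i))\le N\varepsilon\sum_i w(\cB_{r_{X_i}}^\cX(X_i)).
\]
To convert the right-hand side into $w(D_1)$, I would compare each ball with the ``fat'' set $\cC_r(X)$: since $\cX$ is open and $\rho$ is the parabolic metric, $\cB_r^\cX(X)=\cB_r(X)\cap\cX$, so $\cC_r(X)=Q_r(X)\cap\Omega_T\subseteq\cB_r^\cX(X)$; applying Lemma \ref{1015@lem3} with $E=\cC_r(X)$, together with $|\cC_r(X)|\ge N_0r^{d+2m}$ from \eqref{160804@eq9a} and $|\cB_r^\cX(X)|\le2|B_1|r^{d+2m}$, gives $w(\cB_r^\cX(X))\le c_0^{-1}w(\cC_r(X))$ with $c_0=K_0^{-1}\big(N_0/(2|B_1|)\big)^p$. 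Finally, disjointness of the balls $\cB_{r_{X_i}}^\cX(X_i)$ forces the sets $\cC_{r_{X_i}}(X_i)$ to be pairwise disjoint, since any common point would lie in $\Omega_T\subseteq\cX$ and hence in the intersection of the corresponding $\cX$-balls; as each $\cC_{r_{X_i}}(X_i)\subset D_1$, we get $\sum_i w(\cC_{r_{X_i}}(X_i))\le w(D_1)$, and combining the displays yields $w(D_0)\le N\varepsilon\,w(D_1)$ with $N=N(d,p,K_0,K_2,N_0)$.

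The step needing the most care is the critical-radius selection: one must extract the factor $\varepsilon$ from the $5r$-dilates (which have $D_0$-density below $\varepsilon$) while still being entitled to invoke $(ii)$ at the radius $r_X$ itself, which is exactly why the semicontinuity argument along $r_j\uparrow r_X$ is required; the other ingredients — Lebesgue differentiation and the $5r$-covering lemma on $(\cX,\rho)$, and the elementary comparison through Lemma \ref{1015@lem3} — are routine.
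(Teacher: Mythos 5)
Your proof is correct and follows essentially the same route as the paper's: you select a critical (stopping) radius $r_X$ at which the weighted $D_0$-density hits $\varepsilon$, apply the Vitali $5r$-covering lemma to the family $\{\cB_{r_X}^\cX(X)\}$, bound the dilated balls by the original ones via weighted doubling, and then pass from $w(\cB_{r_X}^\cX(X))$ to $w(\cC_{r_X}(X))\le w(D_1)$ using Lemma \ref{1015@lem3} together with \eqref{160804@eq9a}. The only differences are cosmetic: you define $r_X$ as a supremum and use left-continuity (continuity from below) of the weighted measure, whereas the paper uses continuity of the density function $\rho(r)$ and takes the largest solution of $\rho(r)=\varepsilon$; and you invoke the Vitali $5r$-covering lemma and the Lebesgue differentiation theorem for the doubling measure $w\,d\mu$ as known facts, while the paper spells out the greedy Vitali construction and derives the density statement from the unweighted Lebesgue differentiation theorem combined with $w>0$ a.e.
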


\begin{proof}
We first claim that for almost every $X\in D_0$, there exists  $R_X\in (0, R_0)$ such that 
\begin{equation*}
w(D_0\cap \cB^{\cX}_{R_X}(X))=\varepsilon w(\cB_{R_X}^{\cX}(X))
\end{equation*}
and 
\begin{equation}		\label{160804@eq9b}
w(D_0\cap \cB^{\cX}_{R}(X))< \varepsilon w(\cB_R^{\cX}(X)), \quad \forall R\in (R_X, R_0].
\end{equation}
We define a function $\rho$ on $[0,R_0]$  by 
\[
\rho(r)=\frac{w(D_0\cap \cB^{\cX}_r(X))}{w(\cB^{\cX}_r(X))}=\frac{1}{w(\cB_r^{\cX}(X))}\int_{\cB_r^{\cX}(X)}I_{D_0}w\,dY.
\]
By applying the Lebesgue differentiation theorem and using the fact that 
$$
w(X)>0 \quad \text{almost every }\, X\in \cX, 
$$
we obtain for almost every $X\in D_0$ that 
$$
\rho(0)=\lim_{r\to 0^+}\left(\frac{|\cB_r^{\cX}(X)|}{w(\cB_r^{\cX}(X))}\times \dashint_{\cB_r^{\cX}(X)}I_{D_0}w\,dx\right)=1.
$$
Since $\rho$ is continuous on $[0, R_0]$ and $\rho(R_0)<\varepsilon$, there exists $r_X\in (0,R_0)$ such that $\rho(r_X)=\varepsilon$.
Then we obtain the claim by setting
$$
R_X:=\max\{r_X\in (0,R_0):\rho(r_X)=\varepsilon\}.
$$
Hereafter, we denote 
$$
\Gamma=\big\{B^{\cX}_{R_X}(X):X\in D_0'\big\},
$$
where  $D_0'$ is  the set of all points $X\in D_0$ such that $R_X$ exists.
Then by the Vitali lemma, we have a countable subcollection $G$ in $\Gamma$ such that  
\begin{enumerate}[(a)]
\item
$Q\cap Q'=\emptyset$ for any $Q,\, Q'\in G$ satisfying $Q\neq Q'$.
\item
$D_0\subset \bigcup_{\cB_{R_X}^{\cX}(X)\in G}\cB_{5R_X}^{\cX}(X)$.
\end{enumerate}
Indeed, the subcollection $G$ can be constructed as follows.
We write $\Gamma_1=\Gamma$, and  choose a cylinder $\cB_{R_{X_1}}(X_1)$, denoted by $Q^1$, in  $\Gamma$ such that 
$$
R_{X_1}>\frac{1}{2}\sup_{\cB^{\cX}_{R_X}(X)\in \Gamma_1}R_X,
$$
and split $\Gamma_1=\Gamma_2\cup \Gamma_2^*$, where 
$$
\Gamma_2=\{Q\in \Gamma_1: Q^1\cap Q=\emptyset\}, \quad \Gamma_2^*=\{Q\in \Gamma_1:Q^1\cap Q\neq \emptyset\}.
$$
Assume that $Q^k$ and  $\Gamma_{k+1}$ have been already determined.
If $\Gamma_{k+1}$ is empty, then the process ends.
If not,  we choose a cylinder $Q^{k+1}=\cB^{\cX}_{R_{X_{k+1}}}(X_{k+1})$ in $\Gamma_{k+1}$ such that 
$$
R_{X_{k+1}}>\frac{1}{2}\sup_{\cB_{R_X}^{\cX}(X)\in \Gamma_{k+1}}R_X,
$$
and split $\Gamma_{k+1}=\Gamma_{k+2}\cup \Gamma_{k+2}^*$, where
$$
\Gamma_{k+2}=\{Q\in \Gamma_{k+1}: Q^{k+1}\cap Q=\emptyset\}, \quad \Gamma_{k+2}^*=\{Q\in \Gamma_{k+1}:Q^{k+1}\cap Q\neq \emptyset\}.
$$
We define $G=\{Q^k\}_{k\in J}$, where $J\subseteq \bN$.
Obviously, $G$ satisfies $(a)$.
To see $(b)$, we note that 
$$
Q\subset \cB^{\cX}_{5R_{X_{k}}}(X_k), \quad \forall Q\in \Gamma_{k+1}^*, \quad \forall k\in J,
$$
and 
$$
\Gamma_1=\bigcup_{k\in J}\Gamma_{k+1}^*.
$$
Therefore, we have 
$$
D_0\subset \bigcup_{Q\in \Gamma_1} Q\subset \bigcup_{\cB^{\cX}_{R_X}(X)\in G}\cB_{5R_X}^{\cX}(X),
$$
which implies that $G$ satisfies $(b)$.

Now we are ready prove the lemma.
From the assumption $(i)$, \eqref{160804@eq9b}, and the doubling property of $\cX$, it follows that  
$$
w(D_0\cap \cB_{5R_X}^{\cX}(X))<\varepsilon w(\cB_{5R_X}^{\cX}(X))\le \varepsilon (K_2)^3w(\cB^{\cX}_{R_X}(X)), \quad \forall X\in D_0'.
$$
Using this together with $(b)$, we obtain 
\begin{align*}
w(D_0)&=w\Biggl(\bigcup_{\cB^\cX_{R_X}(X)\in G}D_0\cap \cB_{5R_X}^{\cX}(X)\Biggr)\\
&\le \sum_{\cB_{R_X}^{\cX}(X)\in G}w(D_0\cap \cB_{5R_X}^{\cX}(X))\\
&\le \varepsilon (K_2)^3\sum_{\cB_{R_X}^{\cX}(X)\in G} w(\cB_{R_X}^{\cX}(X)).
\end{align*}
Observe that Lemma \ref{1015@lem3} and \eqref{160804@eq9a} yield
$$
w(\cB_{R_X}^{\cX}(X))\le K_0\bigg(\frac{|\cB_{R_X}^{\cX}(X)|}{|\cC_{R_X}(X)|}\bigg)^pw(\cC_{R_X}(X))\le N w(\cC_{R_X}(X)),
$$
where $N=N(d,K_0,p,N_0)$.
By combining the above two inequalities, and then, using $(a)$ and the assumption $(ii)$, we have 
$$
w(D_0)\le \varepsilon N\sum_{\cB_{R_X}^{\cX}(X)\in G}w(\cC_{R_X}(X))
$$
$$
=\varepsilon Nw\Biggl(\bigcup_{\cB^{\cX}_{R_X}(X)\in G}\cC_{R_X}(X)\Biggr)\le \varepsilon N w(D_1),
$$
where $N=N(d,K_0,K_2,p,N_0)$.
The lemma is proved.
\end{proof}

In the lemma below, we prove that  functions in weighted $L_p$ spaces can be approximated by bounded functions.

\begin{lemma}		\label{160621@lem1}
Let $T\in(-\infty,\infty]$, $\Omega$ be a domain in $\bR^d$, and $\Omega_T\subseteq \cX_1\times \cX_2$, where $\cX_1$ and $\cX_2$ are  spaces of homogeneous type  in $\bR^{d_1}$ and $\bR\times \bR^{d_2}$, $d_1+d_2=d$, respectively.
Let $p,\, q\in (1,\infty)$ and 
$$
w(t,x)=w_1(x')w_2(t,x''), \quad x'\in \cX_1, \quad (t,x'')\in \cX_2, 
$$
where $w_1\in A_p(\cX_1)$ and $w_2\in A_q(\cX_2)$.
Then  for given $f\in L_{p,q,w}(\Omega_T)$, there exists a sequence $\{f_k\}_{k=1}^\infty$ in $L_{p,q,w}(\Omega_T)\cap L_\infty(\Omega_T)$ with bounded supports such that $f_k\to f$ in $L_{p,q,w}(\Omega_T)$ as $k\to \infty$.
\end{lemma}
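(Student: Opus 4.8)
\textit{Proof proposal.} The plan is to approximate $f$ by its truncations in amplitude and in space. For $k\in\bN$ set
$f_k=f\,I_{\{|f|\le k\}}\,I_{\cB_k}$, where $\cB_k=\cB_k(0)$ is the parabolic ball of radius $k$ centered at the origin in $\bR^{d+1}$. Each $f_k$ is measurable, satisfies $|f_k|\le k$ and vanishes outside the bounded set $\cB_k\cap\Omega_T$; since $|f_k|\le|f|$ pointwise and the mixed norm is monotone in $|f|$ (the inner integral over $\cX_1$ is monotone, then $s\mapsto s^{q/p}$ is monotone, then the outer integral over $\cX_2$ is monotone), we get $\|f_k\|_{L_{p,q,w}(\Omega_T)}\le\|f\|_{L_{p,q,w}(\Omega_T)}<\infty$. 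Hence each $f_k$ lies in $L_{p,q,w}(\Omega_T)\cap L_\infty(\Omega_T)$ with bounded support, and the only thing left to verify is that $f_k\to f$ in $L_{p,q,w}(\Omega_T)$.

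First I would record the elementary bookkeeping point. By Tonelli applied to the finite quantity defining $\|f\|_{L_{p,q,w}(\Omega_T)}^q$, the inner integral $h(t,x''):=\int_{\cX_1}|f(t,\cdot,x'')|^pI_{\Omega_T}(t,\cdot,x'')\,w_1\,dx'$ is finite for $w_2\,dx''\,dt$-a.e.\ $(t,x'')\in\cX_2$, and $g:=h^{1/p}$ satisfies $g\in L_{q,w_2}(\cX_2)$ with $\|g\|_{L_{q,w_2}(\cX_2)}=\|f\|_{L_{p,q,w}(\Omega_T)}$. Then comes a double application of dominated convergence. Fix $(t,x'')$ with $h(t,x'')<\infty$. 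Since $|f-f_k|\le|f|$ and $f_k\to f$ pointwise in $x'$, dominated convergence on $\cX_1$ with majorant $|f(t,\cdot,x'')|^pI_{\Omega_T}w_1\in L^1(\cX_1)$ gives $g_k(t,x''):=\big(\int_{\cX_1}|f-f_k|^pI_{\Omega_T}w_1\,dx'\big)^{1/p}\to0$. Moreover $g_k\le g$, so $g_k^q w_2\le g^q w_2\in L^1(\cX_2)$, and a second dominated convergence on $\cX_2$ yields $\int_{\cX_2}g_k^q w_2\,dx''\,dt\to0$, i.e.\ $\|f-f_k\|_{L_{p,q,w}(\Omega_T)}\to0$. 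This completes the argument.

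I do not expect a genuine obstacle: this is a routine truncation argument, and the only mild care required is to run dominated convergence twice — once in $x'$ for a.e.\ fixed $(t,x'')$, and once in $(t,x'')$ — which is legitimate precisely because the hypothesis $f\in L_{p,q,w}(\Omega_T)$ furnishes an integrable majorant at each stage. The same proof works verbatim in the elliptic setting, replacing $\cB_k$ by a Euclidean ball in $\bR^d$.
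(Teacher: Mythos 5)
Your proof is correct, but it follows a genuinely different and somewhat simpler route than the paper's. The paper's proof passes to the unweighted mixed-norm space by the substitution $f \mapsto f\,w_1^{1/p}w_2^{1/q} \in L_{p,q}(\Omega_T)$, invokes density of $C_0^\infty(\Omega_T)$ there, and then undoes the substitution by setting $f_k = g_k w_1^{-1/p} w_2^{-1/q} I_{M_k^1 \times M_k^2}$, with $M_k^1 = \{w_1 \ge 1/k\}$ and $M_k^2 = \{w_2 \ge 1/k\}$, to restore boundedness; convergence is then verified by splitting the error into three pieces $I_k^1, I_k^2, I_k^3$ and applying dominated convergence to the two weight-tail terms. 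Your approach truncates $f$ directly in amplitude and in spatial extent and establishes convergence by a nested dominated convergence argument (first in $x'$ for a.e.\ $(t,x'')$, then in $(t,x'')$), using the integrable majorants $|f(t,\cdot,x'')|^p I_{\Omega_T} w_1$ and $g^q w_2$. This is shorter: it sidesteps the density of $C_0^\infty$ in $L_{p,q}$ and the three-term decomposition, at the cost of only the elementary observations that $|f-f_k| \le |f|$ pointwise, that the mixed norm is monotone, and that $w_1, w_2$ are finite a.e.\ (so that $f_k \to f$ pointwise a.e.). Both arguments are valid; what the paper's route buys is a smooth approximant $g_k$ before correction, but since the lemma only requires $L_\infty$ with bounded support, your direct truncation is sufficient.
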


\begin{proof}
Since $fw_1^{1/p}w_2^{1/q}\in L_{p,q}(\Omega_T)$, there exists a sequence $\{g_k\}_{k=1}^\infty$ in $C^\infty_0(\Omega_T)$ such that 
\begin{equation}		\label{160621@Eq1}
g_k\to fw_1^{1/p}w_2^{1/q} \quad \text{in }\, L_{p,q}(\Omega_T).
\end{equation}
Let us define 
$$
f_k=g_kw_1^{-1/p}w_2^{-1/q}I_{M^1_k\times M^2_k}, 
$$
where 
$$
M^1_k=\left\{x'\in \cX_1:w_1(x')\ge \frac{1}{k}\right\}, \quad M^2_k=\left\{(t,x'')\in \cX_2:w_2(t,x'')\ge \frac{1}{k}\right\}.
$$
Note that $f_k$ are bounded functions on $\Omega_T$ and 
$$
\int_{\cX_2}\bigg(\int_{\cX_1}|f_k-f|^pI_{\Omega_T}w_1\,dx'\bigg)^{q/p}w_2\,dx''\,dt
$$
$$
\le \int_{M_k^2}\bigg(\int_{M_k^1}|f_k-f|^pI_{\Omega_T}w_1\,dx'\bigg)^{q/p}w_2\,dx''\,dt
$$
$$
+ \int_{\cX_2}\bigg(\int_{\cX_1\setminus M_k^1}|f|^pI_{\Omega_T}w_1\,dx'\bigg)^{q/p}w_2\,dx''\,dt
$$
$$
+ \int_{\cX_2\setminus M_k^2}\left(\int_{\cX_1}|f|^pI_{\Omega_T}w_1\,dx'\right)^{q/p}w_2\,dx''\,dt:=I^1_k+I^2_k+I^3_k.
$$
It follows from \eqref{160621@Eq1} that $I_k^1\to 0$ as $k\to \infty$.
Since $|\cX_1\setminus M_k^1|\to 0$ as $k\to \infty$, we have 
$$
|f|^pI_{\Omega_T} w_1I_{\cX_1\setminus M^1_k}\to 0 \quad \text{a.e. in }\, \cX_1.
$$
Using the dominated convergence theorem, we obtain that  
$$
\int_{\cX_1\setminus M_k^1}|f|^pI_{\Omega_T}w_1\,dx'\to 0 \quad \text{a.e. in  }\cX_2,
$$
and thus, by the dominated convergence theorem again, we obtain $I^2_k\to 0$ as $k\to \infty$.
Similarly, we obtain that $I_k^3\to 0$ as $k\to \infty$.
The lemma is proved.
\end{proof}

\bibliographystyle{plain}

\end{document}